\renewcommand{\a}{\alpha}
\renewcommand{\b}{\beta}
\newcommand{\e}{\varepsilon}
\renewcommand{\l}{\lambda} 
\newcommand{\s}{\sigma}
\renewcommand{\O}{\Omega}
\newcommand{\normeq}{\trianglelefteqslant}
\newcommand{\la}{\langle}
\newcommand{\ra}{\rangle}
\renewcommand{\to}{\rightarrow}
\newcommand{\leqs}{\leqslant}
\newcommand{\geqs}{\geqslant}
\newcommand{\vs}{\vspace{2mm}}
\newcommand{\what}{\widehat}
\newcommand{\imod}[1]{\allowbreak\mkern4mu({\operator@font mod}\,\,#1)}
\newtheorem{theorem}{Theorem} 
\newtheorem*{conj*}{Conjecture}
\newtheorem{corol}[theorem]{Corollary}
\newtheorem{thm}{Theorem}[section] 
\newtheorem{prop}[thm]{Proposition} 
\newtheorem{lem}[thm]{Lemma}
\newtheorem{cor}[thm]{Corollary}
\theoremstyle{definition}
\newtheorem{rem}[thm]{Remark}
\newtheorem{remk}{Remark}
\newtheorem{defn}[thm]{Definition}
\begin{document}

\author{Timothy C. Burness}
\thanks{}
\address{T.C. Burness, School of Mathematics, University of Bristol, Bristol BS8 1UG, UK}
\email{t.burness@bristol.ac.uk}
 
  \author{Adam R. Thomas}
 \address{A.R. Thomas, Mathematics Institute,
Zeeman Building, University of Warwick, Coventry CV4 7AL, UK}
 \email{adam.r.thomas@warwick.ac.uk}

\title[Normalisers of maximal tori]{Normalisers of maximal tori and \\ a conjecture of Vdovin}

\begin{abstract}
Let $G = O^{p'}(\bar{G}^F)$ be a finite simple group of Lie type defined over a field of characteristic $p$, where $F$ is a Steinberg endomorphism of the ambient simple algebraic group $\bar{G}$. Let $\bar{T}$ be an $F$-stable maximal torus of $\bar{G}$ and set $N = N_G(\bar{T})$. A conjecture due to Vdovin asserts that if $G \not\cong {\rm L}_3(2)$ then $N \cap N^x$ is a $p$-group for some $x \in G$. In this paper, we use a combination of probabilistic and computational methods to calculate the base size for the natural action of $G$ on $G/N$, which allows us to prove a stronger, and suitably modified, version of Vdovin's conjecture. 
\end{abstract}

\date{\today}

\maketitle

\section{Introduction}\label{s:intro}

Let $G$ be a finite simple group of Lie type over $\mathbb{F}_q$, where $q=p^f$ and $p$ is a prime. Write $G = O^{p'}(\bar{G}^F)$, where $\bar{G}$ is a simple algebraic group of adjoint type over the algebraic closure $k$ of $\mathbb{F}_p$ and $F$ is an appropriate Steinberg endomorphism of $\bar{G}$ with fixed point subgroup $\bar{G}^F$. Let $\bar{T}$ be an $F$-stable maximal torus of $\bar{G}$ and set $N = N_G(\bar{T})$. The following conjecture of Evgeny Vdovin is presented as Problem 17.42 in the Kourovka Notebook \cite{Kou} (it first appeared in the 17th edition, which was published in 2010).

\begin{conj*}
Let $G = O^{p'}(\bar{G}^F)$ be a finite simple group of Lie type and define $N = N_{G}(\bar{T})$ as above. If $G \not\cong {\rm L}_3(2)$ then $N \cap N^x$ is a $p$-group for some $x \in G$.
\end{conj*}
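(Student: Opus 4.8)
The plan is to reformulate the conjecture in terms of the base size $b(G)$ for the natural transitive action of $G$ on $\Omega = G/N$. Recall that $b(G)$ is the smallest size of a subset of $\Omega$ with trivial pointwise stabiliser, and that by transitivity $b(G) \leqslant 2$ if and only if $N \cap N^{x} = 1$ for some $x \in G$. Since the trivial group is a $p$-group, the equality $b(G) = 2$ is strictly stronger than Vdovin's assertion, so the first objective is to prove $b(G) = 2$ for every $G$ lying outside a short, explicitly determined list of small cases, and then to handle that list separately. The basic structural input is that $|\bar{T}^{F}|$ is coprime to $p$ while $N/(N \cap \bar{T})$ embeds in the Weyl group $W$ of $\bar{G}$; hence $|N|$ divides $|W| \cdot |\bar{T}^{F}|$, which is bounded above by $|W|$ times a polynomial in $q$ of degree equal to the rank of $\bar{G}$ and is therefore minuscule next to $|G| \approx q^{\dim \bar{G}}$. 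In particular, whenever $p \nmid |W|$ the group $N$ is a $p'$-group, so $N \cap N^{x}$ is a $p$-group if and only if it is trivial; this both forces $b(G) = 2$ in those cases and explains why ${\rm L}_{3}(2)$ can be a genuine exception.

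For the generic case I would use the standard probabilistic method of Liebeck and Shalev. Let $\mathcal{P}$ be a set of representatives for the $G$-classes of elements of prime order in $N$ and set
\[
\widehat{Q}(G,N) = \sum_{x \in \mathcal{P}} \frac{|x^{G} \cap N|^{2}}{|x^{G}|}.
\]
Since $\widehat{Q}(G,N)$ is an upper bound for the probability that two uniformly random points of $\Omega$ have non-trivial common stabiliser, the inequality $\widehat{Q}(G,N) < 1$ gives $b(G) \leqslant 2$. The work is then to bound the two ingredients of each summand. For the class sizes $|x^{G}|$ I would invoke known lower bounds on the minimal size of a non-trivial conjugacy class in a finite simple group of Lie type of given rank. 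For the fusion numbers $|x^{G} \cap N| \leqslant |N|$ I would exploit the structure of $N$: a prime-order element of $N \cap \bar{T}$ is semisimple, and its $G$-fusion within $\bar{T}^{F}$ is controlled by the action of $W$, while elements of $N$ outside the torus contribute only a factor bounded in terms of $|W|$; elements of order $p$ (which can occur only when $p \mid |W|$) are dealt with separately. Assembling these estimates should yield $\widehat{Q}(G,N) < 1$ whenever $q$ is not too small relative to the Lie rank. The bound is weakest precisely for the low-rank families --- ${\rm L}_{2}(q)$, ${\rm L}_{3}(q)$, ${\rm U}_{3}(q)$, ${\rm PSp}_{4}(q)$, ${\rm G}_{2}(q)$ and a few others --- where one must estimate $\widehat{Q}(G,N)$ term by term rather than crudely.

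This reduces the problem to a bounded collection of small groups: those of low rank, and those defined over small fields. For these I would argue computationally in \textsc{Magma}. For each relevant simple group $G$ and each class of $F$-stable maximal torus --- these classes being parametrised by the $F$-conjugacy classes in $W$, hence finite in number --- I would construct a representative normaliser $N = N_{G}(\bar{T})$, taking care over the index of $N$ in $N_{\bar{G}^{F}}(\bar{T})$ when passing down to $G = O^{p'}(\bar{G}^{F})$, and then either exhibit some $x \in G$ with $N \cap N^{x} = 1$, or, when no such $x$ exists, run over a transversal for $N$ in $G$, compute the intersections $N \cap N^{x}$, and check whether one of them is a $p$-group. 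I expect this case analysis to be the main obstacle: there are many torus classes to consider, the normalisers differ markedly in structure (the split and Coxeter tori being the two extremes), and the configurations with $b(G) > 2$ must be pinned down exactly and then verified by hand. The outcome should be a complete determination of $b(G)$: it equals $2$ for all $(G,\bar{T})$ apart from an explicit short list on which $b(G) = 3$, and for every pair on that list except $G \cong {\rm L}_{3}(2)$ with $\bar{T}$ a Coxeter torus one can still choose $x$ with $N \cap N^{x}$ a (possibly non-trivial) $p$-group, which proves the suitably modified, and elsewhere stronger, version of Vdovin's conjecture.
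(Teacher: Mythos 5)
Your strategy is exactly the one the paper follows: reformulate via the base size $b(G,N)$ for the action on $G/N$, establish $b(G,N)=2$ generically via the Liebeck--Shalev bound $\what{\mathcal{Q}}(G,N,2)=\sum_i |x_i^G\cap N|^2/|x_i^G|<1$ using class-size lower bounds and fusion estimates controlled by $W$ and $|\bar{T}^F|$, and finish the low-rank and small-field cases computationally in {\sc Magma}. However, your predicted conclusion contains a genuine error: it is not true that every exceptional pair other than $({\rm L}_3(2),7{:}3)$ still admits some $x$ with $N\cap N^x$ a $p$-group. The computations reveal two further genuine counterexamples to Vdovin's statement, namely $G={\rm U}_4(2)$ with $N=3^3{:}S_4$ (where $|N\cap N^x|\in\{24,54\}$ for all $x\in G\setminus N$, so $N\cap N^x$ is never a $2$-group) and $G={\rm U}_5(2)$ with $N=3^4{:}S_5$ (where $|N\cap N^x|$ is always divisible by $12$). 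Consequently the conjecture as literally stated is false and cannot be proved; only a version excluding these two additional pairs holds, which is precisely the modification the paper makes. Your method, carried out correctly, would uncover this, but the final step of your proposal as written asserts something that the case analysis refutes.
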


The special case $G \cong {\rm L}_{3}(2)$ is a genuine exception. Indeed, if $N = 7{:}3$ is the normaliser of a Singer cycle, then $|N \cap N^x| = 3$ for all $x \in G \setminus N$. 

In this paper, we will prove a suitably modified version of Vdovin's conjecture (it turns out that there are two additional exceptions, so an adjustment is necessary). In order to state our main result (see Theorem \ref{t:main} below), we need some additional terminology. Let $G$ and $N$ be as above and view $G$ as a transitive permutation group on the set $\O = G/N$ of cosets of $N$. Then the \emph{base size} of $G$, denoted $b(G,N)$, is the minimal size of a subset of $\O$ with trivial pointwise stabiliser in $G$. Equivalently, $b(G,N)$ is the smallest number of conjugates of $N$ so that the intersection of these subgroups is trivial. In particular, $b(G,N)=2$ if and only if $N \cap N^x = 1$ for some $x \in G$. As a consequence, let us observe that if $p$ does not divide the order of the Weyl group $N_{\bar{G}}(\bar{T})/\bar{T}$ of $\bar{G}$, then $N \cap N^x$ is a $p$-group for some $x \in G$ if and only if $b(G,N) = 2$.  

Determining the base size of a finite permutation group is both a classical and fundamental problem in permutation group theory (we refer the reader to the survey articles \cite{BC,LSh3} and \cite[Section 5]{Bur181} for more background on bases and their diverse applications in group theory and related areas). In particular, there has been a great deal of recent interest in studying base sizes for almost simple primitive permutation groups, partly motivated by a circle of highly influential conjectures of Babai, Cameron, Kantor and Pyber from the 1990s, which have all been resolved in recent years. In this context, our main result can be viewed as a contribution to research in this direction. It also constitutes further progress towards a classification of the finite primitive groups with a base of size $2$, which is an active and ambitious project initiated by Saxl in the 1990s.    

\begin{theorem}\label{t:main}
Let $G = O^{p'}(\bar{G}^F)$ be a finite simple group of Lie type and set $N = N_{G}(\bar{T})$ as above. Then either
\begin{itemize}\addtolength{\itemsep}{0.2\baselineskip}
\item[{\rm (i)}] $b(G,N) = 2$; or
\item[{\rm (ii)}] $G$, $N$ and $b(G,N)$ are recorded in Table \ref{tab:main} (up to isomorphism).
\end{itemize}
\end{theorem}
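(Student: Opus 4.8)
The plan is to reduce the problem to a finite list of "small" cases handled by computation, with the generic cases dispatched by a probabilistic argument. First I would recall the standard fixed-point-ratio approach to bounding base sizes: if $Q(G,c)$ denotes the probability that a randomly chosen $c$-tuple of points in $\Omega = G/N$ fails to be a base, then $Q(G,c) \leqslant \sum_{x} |x^G \cap N|^c / |x^G|$, where the sum runs over representatives of the nontrivial conjugacy classes of prime-order elements of $G$ meeting $N$; here $|x^G \cap N|/|x^G| = \fpr(x,G/N)$ is the fixed-point ratio of $x$. So it suffices to show $Q(G,c) < 1$ for $c = 2$ (or $c$ equal to the claimed base size) for all but finitely many $(G,N)$. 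Since $N = N_G(\bar T)$ has order roughly $q^r |W|$ where $r$ is the rank and $W$ the relevant Weyl group, while $|G| \approx q^{\dim \bar G}$, one expects $|\Omega|$ to grow like a fixed positive power of $q$, and the fixed-point ratios to decay accordingly; the main input is a good upper bound on $\fpr(x, G/N)$, uniformly over the maximal tori $\bar T$ and over prime-order $x$.

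The key steps, in order, would be: (1) Fix notation and parametrise the $G$-classes of subgroups $N = N_G(\bar T)$ by $F$-conjugacy classes in $W$, recording $|N|$ and hence $|\Omega| = |G|/|N|$ in each case; split into $\bar G$ of classical type (treated uniformly by rank) and the finitely many exceptional types (treated case by case, ultimately with computer assistance using {\sc Magma} or the known character-table / class data). (2) Establish a uniform upper bound of the shape $\fpr(x, G/N) \leqslant c\, q^{-\delta}$ for every element $x \in G$ of prime order and some absolute $\delta > 0$, perhaps by bounding $|x^G \cap N| \leqslant |N|$ crudely and $|x^G|$ from below via the standard lower bounds on conjugacy class sizes in groups of Lie type (Liebeck--Seitz, or the minimal-class-size estimates), refining this for the problematic small-class elements such as long-root elements and certain semisimple involutions. (3) Feed these bounds into the estimate for $Q(G,2)$ and verify $Q(G,2) < 1$ whenever $q$, or the rank, is sufficiently large; this leaves an explicit finite list of groups $G$ of bounded rank over bounded fields, together with a bounded set of tori $\bar T$. (4) For each remaining pair $(G,N)$, compute $b(G,N)$ directly — either by a {\sc Magma} base-and-strong-generating-set computation on the permutation action of degree $|\Omega|$, or, when that degree is too large, by the combinatorial criterion that $b(G,N) \leqslant c$ iff some $c$-fold intersection of conjugates of $N$ is trivial, which one can test using class-multiplication data or random search. (5) Collect the genuine exceptions into Table 1.1 (the two new exceptions mentioned in the introduction, beyond ${\rm L}_3(2)$), and confirm $b(G,N) = 2$ in all other small cases.

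The main obstacle I anticipate is Step (2): obtaining fixed-point-ratio bounds for $G/N$ that are simultaneously strong enough near the "small" end (so that the finite leftover list from Step (3) is genuinely small and computationally tractable) and uniform enough across all the maximal tori, including the maximally split and the Coxeter/Singer-type tori where $|N|$ is smallest and the action is closest to being a base-size-$2$ borderline case. Unlike the situation for a fixed maximal subgroup, here $N$ ranges over many conjugacy classes as $\bar T$ varies, so one needs bounds on $|x^G \cap N|$ that hold for all of them at once — essentially a bound on $|N \cap C|$ for each class $C$ uniform in the torus type — and the structure of $N/\bar T \cap C$ as a subset of the Weyl group will need care, particularly for unipotent $x$ where $x^G \cap \bar T = \emptyset$ forces one to analyse how unipotent elements of $G$ can normalise $\bar T$. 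A secondary obstacle is purely computational: for several of the remaining exceptional-type groups the coset space $G/N$ has degree in the tens of millions or more, so a direct permutation computation is infeasible and one must instead work inside $\bar G^F$ with the torus and its normaliser explicitly, or exploit the embedding of $N$ and a random-subproduct search to certify that two or three conjugates already intersect trivially.
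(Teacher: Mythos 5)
Your overall strategy -- a Liebeck--Shalev style bound on the probability that a random pair of points fails to be a base, applied uniformly in the generic cases, with a residual finite list settled by {\sc Magma} -- is exactly the route the paper takes, and your diagnosis of the main difficulties (uniformity of the bound on $|x^G \cap N|$ over all torus types, and the analysis of unipotent elements of $N$ via their images in $W$) is accurate. However, step (2) of your plan contains a genuine gap. A uniform bound of the shape $\fpr(x,G/N) \leqs c\,q^{-\delta}$ cannot simply be ``fed into'' the estimate $\what{\mathcal{Q}}(G,N,2) = \sum_i |x_i^G|\cdot \fpr(x_i,G/N)^2$: it only yields $\what{\mathcal{Q}}(G,N,2) \leqs c^2 q^{-2\delta}\sum_i|x_i^G|$, and $\sum_i|x_i^G|$ is the number of prime-order elements of $G$, which grows far faster than $q^{2\delta}$. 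What is actually needed is a bound relative to the class size, namely $\fpr(x,G/N) \leqs |x^G|^{-2/3}$ as in \eqref{e:fpr}, so that each class contributes at most $|x_i^G|^{-1/3}$ and one can invoke the convergence of the zeta function $\eta_G(1/3) = \sum_i |x_i^G|^{-1/3} < 1$ for classical groups with $n \geqs 6$ (Proposition \ref{p:eta}, from \cite{B07}); alternatively, one groups classes together and applies the $A^2/B$ estimate of Lemma \ref{l:calc}, which is how the paper handles the exceptional groups and the low-dimensional classical groups. Without one of these devices your step (3) does not go through.

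A second, smaller point: for classical groups there is no clean reduction to ``$q$ or the rank sufficiently large''. The bound $\a(x) = \log|x^G\cap N|/\log|x^G| \leqs 1/3$ has to be verified for every prime-order class, every $n$ and every $q$ simultaneously, and this is where most of the work in Sections \ref{s:lu}--\ref{s:o_even} lies: explicit counts of $|x^G \cap N|$ according to the cycle-shape of the image of $x$ in the Weyl group (as in Lemma \ref{l:lu1} and the estimates \eqref{e:acc}, \eqref{e:sp2}, \eqref{e:soo1}), with sharpened values of $t$ in $\eta_G(t)<1$ computed by machine for small $(n,q)$ (Lemmas \ref{l:lu}, \ref{l:sp1}, \ref{so_odd1}, \ref{so_even1}). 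The existence of the infinite family ${\rm L}_2(q)$, $q$ even, in Table \ref{tab:main} also shows that an asymptotic-in-$q$ argument alone cannot dispose of all bounded-rank cases. Your computational step (4) matches the paper's Lemmas \ref{l:small} and \ref{l:excepcomp}, including the device of certifying $b(G,N)=2$ via an overgroup of $N$ when the degree is too large for direct methods.
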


\begin{table}
\[
\begin{array}{clll} \hline
b(G,N) & G & N  & \\ \hline
3 & {\rm L}_{2}(q) & D_{2(q+1)} & \mbox{$q \geqs 4$ even} \\
& {\rm L}_{3}(2) & 7{:}3 & \\ 
& {\rm U}_{3}(3) & 4^2{:}S_3 & \\
& {\rm U}_5(2) & 3^4{:}S_5 & \\
& {\rm U}_6(2) & 3^4{:}S_6 & \\
& {\rm Sp}_6(2) & 3^3{:}(S_2 \wr S_3) & \\
& \O_{8}^{+}(2) & 3^4{:}(2^3{:}S_4) & \\ 
4 & {\rm U}_4(2) & 3^3{:}S_4 & \\ \hline
\end{array}
\]
\caption{The groups in Theorem \ref{t:main} with $b(G,N) \geqs 3$}
\label{tab:main}
\end{table}

By inspecting the cases appearing in Table \ref{tab:main}, we obtain the following corollary, which establishes a modified form of Vdovin's conjecture.

\begin{corol}\label{c:main}
There exists an element $x \in G$ such that $N \cap N^x$ is a $p$-group if and only if $(G,N)$ is not one of the following (up to isomorphism):
\begin{equation}\label{e:ex}
({\rm L}_{3}(2), 7{:}3), \; ({\rm U}_4(2),3^3{:}S_4), \; ({\rm U}_5(2),3^4{:}S_5).
\end{equation}
\end{corol}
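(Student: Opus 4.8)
The plan is to deduce the corollary directly from Theorem \ref{t:main} together with the observation in the introduction linking base size, the exponent of the Weyl group, and the $p$-group condition. First I would split into two cases according to whether $p$ divides $|W|$, where $W = N_{\bar G}(\bar T)/\bar T$ is the Weyl group of $\bar G$. When $p \nmid |W|$, the introduction already records that $N \cap N^x$ is a $p$-group for some $x$ if and only if $b(G,N) = 2$; hence for such $G$ the question is settled entirely by Theorem \ref{t:main}, and one reads off from Table \ref{tab:main} which pairs $(G,N)$ fail. Note that all the exceptional entries in Table \ref{tab:main} have $q$ a power of $2$ or $3$, so I would check case by case: for ${\rm L}_2(q)$ with $q$ even one has $|W| = 2$ and $p = 2$ divides $|W|$, so that line needs the separate analysis below; for ${\rm L}_3(2)$, ${\rm U}_3(3)$, ${\rm U}_4(2)$, ${\rm U}_5(2)$, ${\rm U}_6(2)$, ${\rm Sp}_6(2)$ and $\O_8^+(2)$ one similarly checks whether $p \mid |W|$.

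Next, for the pairs where either $b(G,N) = 2$ (so $N \cap N^x = 1$ for some $x$, which is trivially a $p$-group) the conclusion is immediate, so the only real work concerns the finitely many exceptional pairs in Table \ref{tab:main}. For each such pair I would examine the structure $N = V{:}R$ displayed in the table, where $V$ is the (abelian) normal $p'$-part coming from $\bar T^F$ and $R \cong W$ or a relevant subgroup. The key point is that $N \cap N^x$ is a $p$-group precisely when it contains no nontrivial $p'$-element, equivalently when $V \cap (N \cap N^x)^{p'}$-part is trivial for a suitable $x$; since $V \normeq N$ with $N/V$ of order coprime to... here one must be careful when $p \mid |R|$. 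Concretely, for $G = {\rm L}_2(q)$ with $q \geqs 4$ even and $N = D_{2(q+1)}$: here $p = 2$, the cyclic subgroup of order $q+1$ is the $p'$-part, and for any $x \in G \setminus N$ the intersection $N \cap N^x$ has order dividing... I would argue via the standard dihedral intersection pattern in ${\rm L}_2(q)$ that $N \cap N^x$ is always a $2$-group — indeed $b(G,N) = 3$ is witnessed by three conjugates meeting trivially, while two conjugates of a dihedral group of order $2(q+1)$ in ${\rm PGL}_2(q)$-geometry meet in a subgroup of order at most $2$, hence a $2$-group. So this line does \emph{not} produce an exception to the corollary.

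For the remaining table entries I would run the same kind of analysis, using the explicit module structures: in ${\rm U}_3(3)$, ${\rm U}_6(2)$, ${\rm Sp}_6(2)$, $\O_8^+(2)$ one checks that although $b(G,N) = 3$, some pair $N \cap N^x$ is already a $p$-group, whereas in $({\rm U}_4(2), 3^3{:}S_4)$, $({\rm U}_5(2), 3^4{:}S_5)$ and $({\rm L}_3(2), 7{:}3)$ \emph{every} nontrivial intersection $N \cap N^x$ contains a nontrivial $p'$-element, so these three are genuine exceptions. The verification for these finitely many small groups is a routine but essential computation: one enumerates the double cosets $N\backslash G/N$ (for instance in {\sc Magma} or {\sc GAP}) and inspects the orders and compositions of the subgroups $N \cap N^x$ as $x$ ranges over double coset representatives, checking in each case whether any such intersection is a $p$-group. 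I expect the main obstacle to be exactly this bookkeeping for the ${\rm L}_3(2)$ case and the two unitary cases: one has to confirm not just that $b(G,N) \geqs 3$ but the finer statement that no \emph{pair} of conjugates intersects in a $p$-group, which requires checking all double coset representatives rather than merely exhibiting a base; for the $\O_8^+(2)$ and $\O_8^+(2)$-type entries, triality and the large order of $N$ make the double-coset enumeration the most delicate part, though it remains a finite check. Assembling these observations — the $p \nmid |W|$ reduction, the dihedral analysis for ${\rm L}_2(q)$, and the explicit computations for the handful of small exceptional groups — yields precisely the list \eqref{e:ex}, completing the proof.
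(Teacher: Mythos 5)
Your proposal is correct and follows essentially the same route as the paper: deduce everything from Theorem \ref{t:main} for the $b(G,N)=2$ cases, then settle the finitely many entries of Table \ref{tab:main} by inspecting the orders of the two-point stabilisers $N \cap N^x$, which is exactly the extra information the paper records in the statements of Theorems \ref{t:lu}, \ref{t:sp}, \ref{t:so_even} and Propositions \ref{l:psl2}--\ref{l:psl5} (e.g.\ $|N \cap N^x|=2$ for all $x \in G \setminus N$ when $G={\rm L}_2(q)$ with $q$ even, versus $|N\cap N^x|\in\{24,54\}$ for ${\rm U}_4(2)$). The initial reduction via $p \nmid |W|$ is harmless but unnecessary, since the direct case-by-case check of the table entries is what carries the argument in any event.
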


\begin{remk}\label{r:main1}
Let us comment on the three special cases recorded in \eqref{e:ex}.
\begin{itemize}\addtolength{\itemsep}{0.2\baselineskip}
\item[{\rm (a)}] First assume $G = {\rm L}_{3}(2)$. Here $\bar{G} = {\rm PSL}_{3}(k)$ and we recall that there is a bijection between the set of $G$-classes of $F$-stable maximal tori of $\bar{G}$ and the set of conjugacy classes of the Weyl group $S_3$ (see Section \ref{ss:tori} for more details). One of these $G$-classes corresponds to the split maximal tori in $G$, which are trivial since $q=2$, so in this case we observe that $N = N_{G}(\bar{T}) = S_3$ is the subgroup of monomial matrices in $G$ and it is easy to check that $b(G,N) = 2$. The other two $G$-classes yield subgroups of the form $N_G(T)$ with $T$ a cyclic maximal torus of $G$. If $|T|=3$ then $N = D_6$ and $b(G,N) = 2$. On the other hand, if $|T|=7$ then $T$ is a Singer cycle, $N = 7{:}3$ and we find that $|N \cap N^x| = 3$ for all $x \in G \setminus N$ (in addition, it is easy to identify elements $x,y \in G$ such that $N \cap N^x \cap N^y = 1$, so $b(G,N) = 3$).
\item[{\rm (b)}] Next suppose $G = {\rm U}_4(2)$ and $N = 3^3{:}S_4$ is the normaliser of a split maximal torus. This case is also an exception to the main assertion in Vdovin's conjecture since $|N \cap N^x| \in \{24,54\}$ for all $x \in G \setminus N$. In fact, $b(G,N) = 4$ and it is worth noting that there exist $x,y \in G$ such that $N \cap N^x \cap N^y$ has order $2$.
\item[{\rm (c)}] Similarly, if $G = {\rm U}_5(2)$ and $N=3^4{:}S_5$ then we find that $b(G,N) = 3$ and $|N \cap N^x|$ is divisible by $12$ for all $x \in G\setminus N$. 
\end{itemize}
\end{remk}

Some special cases of Theorem \ref{t:main} have been studied in earlier work. For example, \cite[Proposition 4.2(i)]{BTh_ep} states that if $G$ is an almost simple exceptional group of Lie type and $N = N_G(\bar{T})$ is a maximal subgroup, then $b(G,N) = 2$. Similarly, if $G$ is an almost simple classical group and $N = N_G(\bar{T})$ is maximal, then $b(G,N) \leqs 4$ by the main theorem of \cite{B07} (in addition, the exact base size is computed in \cite{B20} if $N$ is soluble and maximal, which includes the cases studied here with $G = {\rm L}_2(q)$ and $q \geqs 13$).  

Bases for the action of the ambient algebraic group $\bar{G}$ on the coset variety $\bar{\O}=\bar{G}/\bar{N}$ have also been investigated, where $\bar{N} = N_{\bar{G}}(\bar{T})$. Here the main result is \cite[Theorem 9]{BGS}, which states that $b(\bar{G},\bar{N}) = 2$ unless $\bar{G}$ is isomorphic to ${\rm PSL}_2(k)$. (More precisely, if $\bar{G} \ne {\rm PSL}_2(k)$ then the \emph{generic} base size is $2$, which means that the $2$-point stabiliser $\bar{G}_{\a,\b}$ is trivial for all $(\a,\b)$ in a non-empty open subset of $\bar{\O} \times \bar{\O}$.) As a consequence, if $G \ne {\rm L}_2(q)$ is a finite simple group of Lie type over $\mathbb{F}_q$ then \cite[Proposition 2.7]{BGS} implies that $b(G,N) = 2$ for all sufficiently large $q$ (moreover, the probability $\mathcal{P}(G,N,2)$ that a random pair of points in $\O = G/N$ forms a base for $G$ tends to $1$ as $q$ tends to infinity). This result for algebraic groups is reflected in Theorem \ref{t:main}, where we see that Table \ref{tab:main} contains an infinite family of exceptions with $G = {\rm L}_2(q)$. In addition, let us observe that if $G = {\rm L}_2(q)$ then   
\[
\mathcal{P}(G,N,2) \to \left\{\begin{array}{ll} 1/2 & \mbox{if $q$ is odd} \\
0 & \mbox{otherwise}
\end{array}\right.
\]
as $q$ tends to infinity (see the proof of \cite[Lemmas 4.7, 4.8]{B20}). 

\begin{remk}\label{r:sol}
Our main theorem has already found an application in \cite{BLN}, which we briefly describe. Let $G$ be a finite insoluble group with souble radical $R(G)$ and consider the graph $\Gamma_{\mathcal{S}}(G)$ with vertices $G \setminus R(G)$, where distinct vertices $x$ and $y$ are adjacent if the subgroup $\la x,y \ra$ is soluble. This is called the \emph{soluble graph} of $G$ and the main theorem of \cite{BLN} states that $\Gamma_{\mathcal{S}}(G)$ is connected and its diameter, denoted $\delta_{\mathcal{S}}(G)$, is at most $5$. By a celebrated theorem of Thompson, a finite group is soluble if and only if every $2$-generated subgroup is soluble. This implies that $\delta_{\mathcal{S}}(G) \geqs 2$ and it remains an open problem to determine all the simple groups with $\delta_{\mathcal{S}}(G) = 2$; the only known examples are as follows (up to isomorphism):
\[
\mbox{${\rm L}_2(q)$ ($q \geqs 4$ even), ${\rm L}_3(2)$, ${\rm U}_4(2)$.}
\]
In \cite{BLN}, Theorem \ref{t:main} is used to reduce this problem to unitary, symplectic and orthogonal groups (see \cite[Propositions 6.9, 6.15]{BLN}). For groups of Lie type, the connection is as follows. Suppose there exists a semisimple element $g \in G$ such that $\la g \ra$ is a maximal torus and $N = N_G(\la g \ra)$ is the unique maximal soluble subgroup of $G$ containing $g$ (for example, if $n \geqs 3$ then this property holds if $G = {\rm L}_n(q)$ and $g \in G$ is a Singer element of order $(q^n-1)/d(q-1)$ with $d=(n,q-1)$). Excluding the special cases in Table \ref{tab:main}, it follows that $N \cap N^x=1$ for some $x \in G$. Since the neighbours of $g$ and $g^x$ in $\Gamma_{\mathcal{S}}(G)$ coincide with the nontrivial elements in $N$ and $N^x$, respectively, we conclude that the distance between $g$ and $g^x$ is at least $3$ and thus $\delta_{\mathcal{S}}(G) \geqs 3$. 
\end{remk}

We will apply a combination of probabilistic and computational methods in the proof of Theorem \ref{t:main}, handling the classical and exceptional groups separately. Our  main approach involves the application of fixed point ratio estimates in order to derive an upper bound on $\mathcal{Q}(G,N,2)$, which is the probability that a random pair of  points in $G/N$ do not form a base for $G$. Clearly, if $\mathcal{Q}(G,N,2)<1$ then $b(G,N) = 2$. This powerful method for studying base sizes was originally introduced by Liebeck and Shalev \cite{LSh} in their proof of a conjecture of Cameron and Kantor on bases for almost simple primitive groups. For appropriate low rank groups defined over small fields, we will also use a range of computational methods to calculate $b(G,N)$, working with {\sc Magma} \cite{Magma}. We refer the reader to Section \ref{ss:comp} for further details on some of these computations.

The classical groups require a detailed analysis and a key tool is the following zeta-type function
\[
\eta_G(t) = \sum_{i=1}^m|x_i^{G}|^{-t}
\]
where $t \in \mathbb{R}$ and $x_1, \ldots, x_m$ is a complete set of representatives of the conjugacy classes in $G$ of elements of prime order. In order to explain the relevance of this function, let 
\[
{\rm fpr}(x,G/N) = \frac{|C_{\O}(x)|}{|\O|} = \frac{|x^G \cap N|}{|x^G|}
\]
be the fixed point ratio of $x \in G$, where $C_{\O}(x)$ is the set of fixed points of $x$ on $\O = G/N$. As explained in Section \ref{ss:prob}, if we can establish the existence of a constant $c>0$ such that ${\rm fpr}(x_i,G/N) \leqs |x_i^G|^{-c}$ for all $i$, then 
\[
\mathcal{Q}(G,N,2) \leqs \sum_{i=1}^m|x_i^G|\cdot {\rm fpr}(x_i,G/N)^2 \leqs \eta_G(2c-1)
\]
and thus $b(G,N) = 2$ if $\eta_G(2c-1)<1$. With this observation in hand, a key result is \cite[Proposition 2.2]{B07}, which states that $\eta_G(1/3)<1$ if $n \geqs 6$, where $n$ is the dimension of the natural module for $G$. For $n \geqs 6$, we are therefore interested in establishing an upper bound of the form
\[
{\rm fpr}(x,G/N) < |x^G|^{-\frac{2}{3}}
\]
for each $x \in G$ of prime order, which in turn requires a careful analysis of $|x^G \cap N|$ and $|x^G|$. There is an extensive literature on the conjugacy classes of prime order elements in $G$ and the main challenge here is to derive an effective upper bound on $|x^G \cap N|$. The low rank classical groups with $n<6$ will require special attention and they are handled separately.

\vs

\noindent \textbf{Notation.} Let $G$ be a finite group and let $n$ be a positive integer. We will write $C_n$, or just $n$, for a cyclic group of order $n$ and $G^n$ will denote the direct product of $n$ copies of $G$. If $X$ is a subset of $G$, then $i_n(X)$ is the number of elements in $X$ of order $n$. An unspecified extension of $G$ by a group $H$ will be denoted by $G.H$; if the extension splits then we may write $G{:}H$. We adopt the standard notation for simple groups of Lie type from \cite{KL} and all logarithms in this paper are in base $2$.

\vs

\noindent \textbf{Acknowledgements.} Burness thanks the Department of Mathematics at the University of Padua for their generous hospitality during a research visit in autumn 2021. Thomas is supported by EPSRC grant EP/W000466/1. 

\section{Preliminaries}\label{s:prel}

In this section we present some preliminary results which will be needed in the proof of Theorem \ref{t:main}. 

\subsection{Probabilistic methods}\label{ss:prob}

Let $G \leqs {\rm Sym}(\O)$ be a finite transitive permutation group of degree $d$ with point stabiliser $N$ and base size $b(G,N)$. Since the elements of $G$ are uniquely determined by their action on a base, it follows that $|G| \leqs d^{b(G,N)}$ and we obtain the lower bound $b(G,N) \geqs \log_d|G|$. 

Let $c$ be a positive integer and let $\mathcal{Q}(G,N,c)$ be the probability that a random $c$-tuple of elements in $\O$ do not form a base for $G$. Although it is difficult to compute $\mathcal{Q}(G,N,c)$ precisely, a powerful approach for determining an effective upper bound was introduced by Liebeck and Shalev in \cite{LSh}. Indeed, it is straightforward to show that 
\[
\mathcal{Q}(G,N,c) \leqs \sum_{i=1}^m |x_i^G|\cdot {\rm fpr}(x_i,G/N)^c =: \what{\mathcal{Q}}(G,N,c),
\]
where $x_1, \ldots, x_m$ form a complete set of representatives of the conjugacy classes in $G$ of elements of prime order and
\[
{\rm fpr}(x_i,G/N) = \frac{|C_{\O}(x_i)|}{|\O|} = \frac{|x_i^G \cap N|}{|x_i^G|}
\]
is the fixed point ratio of $x_i$ on $\O = G/N$ with $C_{\O}(x_i) = \{ \a \in \O \,:\, \a^{x_i} = \a\}$. The following is an immediate consequence.

\begin{prop}\label{p:base}
If $N \ne 1$ and $\what{\mathcal{Q}}(G,N,2)<1$ then $b(G,N) = 2$. 
\end{prop}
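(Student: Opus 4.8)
The plan is to prove the two inequalities $b(G,N)\leqs 2$ and $b(G,N)\geqs 2$ separately. The first is the substantive direction and follows from the probabilistic estimate sketched just before the statement; the second is immediate from the hypothesis $N\ne 1$.

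For the upper bound, I would make the union-bound argument precise. Choose $\a,\b\in\O$ independently and uniformly at random. The pair $(\a,\b)$ fails to be a base for $G$ precisely when the pointwise stabiliser $G_{\a,\b}=G_\a\cap G_\b$ is nontrivial, and in that case $G_{\a,\b}$ contains an element of prime order. Hence the event ``$(\a,\b)$ is not a base'' is contained in $\bigcup_x\{(\a,\b):\a^x=\a,\ \b^x=\b\}$, where the union runs over all prime-order elements $x\in G$. For a fixed such $x$, the probability that a random point of $\O$ is fixed by $x$ is $|C_\O(x)|/|\O|={\rm fpr}(x,G/N)$, so by independence of $\a$ and $\b$ the probability that $x$ fixes both is ${\rm fpr}(x,G/N)^2$. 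Since the fixed point ratio is constant on each conjugacy class, grouping the prime-order elements into the classes with representatives $x_1,\dots,x_m$ gives $\mathcal{Q}(G,N,2)\leqs\sum_{i=1}^m|x_i^G|\cdot{\rm fpr}(x_i,G/N)^2=\what{\mathcal{Q}}(G,N,2)$. If $\what{\mathcal{Q}}(G,N,2)<1$ then $\mathcal{Q}(G,N,2)<1$, so some pair $(\a,\b)$ satisfies $G_{\a,\b}=1$, and therefore $b(G,N)\leqs 2$.

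For the lower bound, note that $b(G,N)=1$ would require a point of $\O$ with trivial stabiliser in $G$, i.e.\ a conjugate of $N$ equal to the trivial group, which is impossible since $N\ne 1$. Hence $b(G,N)\geqs 2$, and combining the two bounds gives $b(G,N)=2$. The argument is essentially routine; the only points that require a little care are the reduction to prime-order elements (so that the relevant sum runs over the finitely many classes appearing in the definition of $\what{\mathcal{Q}}$) and the use of independence of the two random points to produce the square in the exponent. There is no genuine obstacle here.
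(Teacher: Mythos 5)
Your proposal is correct and follows exactly the approach the paper intends: the paper treats the proposition as an immediate consequence of the displayed inequality $\mathcal{Q}(G,N,2)\leqs\what{\mathcal{Q}}(G,N,2)$ (the standard Liebeck--Shalev union bound over prime-order classes), and you have simply written out that routine argument together with the trivial observation that $N\ne 1$ forces $b(G,N)\geqs 2$. No issues.
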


The following result (see \cite[Lemma 2.1]{B07}) is a useful tool for estimating $\what{\mathcal{Q}}(G,N,2)$.

\begin{lem}\label{l:calc}
Suppose $x_{1}, \ldots, x_{s}$ represent distinct $G$-classes such that $\sum_{i}{|x_{i}^{G}\cap N|}\leqs A$ and $|x_{i}^{G}|\geqs B$ for all $i$. Then 
\[
\sum_{i=1}^{s} |x_i^{G}| \cdot {\rm fpr}(x_i,G/N)^2 \leqs A^2/B.
\]
\end{lem}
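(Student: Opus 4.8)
The plan is to rewrite each summand in closed form and then apply the two hypotheses in sequence. First I would use the identity $\fpr(x_i,G/N) = |x_i^G \cap N|/|x_i^G|$ to observe that
\[
|x_i^G| \cdot \fpr(x_i,G/N)^2 = |x_i^G| \cdot \frac{|x_i^G \cap N|^2}{|x_i^G|^2} = \frac{|x_i^G \cap N|^2}{|x_i^G|}.
\]
Next, since $|x_i^G| \geqs B$ for every $i$, each term is bounded above by $|x_i^G \cap N|^2 / B$, so that
\[
\sum_{i=1}^{s} |x_i^G| \cdot \fpr(x_i,G/N)^2 \leqs \frac{1}{B}\sum_{i=1}^{s} |x_i^G \cap N|^2.
\]
Finally I would note that the quantities $|x_i^G \cap N|$ are nonnegative, so the sum of their squares is at most the square of their sum (all cross terms being nonnegative); combined with the hypothesis $\sum_i |x_i^G \cap N| \leqs A$ this gives $\sum_i |x_i^G \cap N|^2 \leqs A^2$, and hence the claimed bound $A^2/B$.

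There is no real obstacle here: the argument is a two-line manipulation followed by the elementary inequality $\sum a_i^2 \leqs (\sum a_i)^2$ for nonnegative reals $a_i$. The only point worth stating carefully is that the $x_i$ represent \emph{distinct} classes, which is what ensures the sets $x_i^G \cap N$ are disjoint and hence that $\sum_i |x_i^G \cap N|$ genuinely counts elements of $N$ without overcounting — though in fact the bound above holds termwise regardless, so this hypothesis is only used implicitly when such a sum is estimated in applications.
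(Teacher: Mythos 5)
Your argument is correct and is essentially the standard proof: the paper itself gives no proof but cites \cite[Lemma 2.1]{B07}, where the argument is exactly this rewriting of each summand as $|x_i^G\cap N|^2/|x_i^G|$, followed by the termwise bound $|x_i^G|\geqs B$ and the elementary inequality $\sum_i a_i^2 \leqs (\sum_i a_i)^2$ for nonnegative reals. Your closing remark about the role of distinctness is also accurate: it matters only for how the bound $\sum_i |x_i^G\cap N|\leqs A$ is verified in applications, not for the inequality itself.
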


In our analysis of classical groups, we will work with the zeta-type function 
\begin{equation}\label{e:etaG}
\eta_G(t) = \sum_{i=1}^m|x_i^{G}|^{-t}
\end{equation}
where $t \in \mathbb{R}$. The next result (see \cite[Proposition 2.2]{B07}) will be an essential ingredient.

\begin{prop}\label{p:eta}
Let $G$ be an almost simple classical group with natural module of dimension $n \geqs 6$. Then $\eta_G(1/3)<1$.
\end{prop}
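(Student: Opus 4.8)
The strategy is to estimate $\eta_G(t)=\sum_{i=1}^m|x_i^G|^{-t}$ at $t=1/3$ by partitioning the prime-order classes of $G$ according to the prime $r$ involved and, above all, according to the size $|x^G|$ of the class (equivalently, the codimension of the centraliser $C_{\bar{G}}(x)$ in $\bar{G}$). Since the number $m$ of such classes grows with $q$ for a fixed classical type, a crude bound of the form $m\cdot(\min_i|x_i^G|)^{-1/3}$ is far too weak; the feature to exploit is that the classes of small index are very few---their number bounded in terms of $n$ alone---whereas the numerous remaining classes have enormous index. So $\eta_G(1/3)$ is controlled by a bounded number of explicitly estimated small terms together with a rapidly convergent tail. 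The analysis below is carried out for $G$ itself and is insensitive to the presence of diagonal automorphisms; prime-order classes involving a field or graph automorphism are dealt with separately and easily, since for such an element $C_G(x)$ is a known subgroup of $G$ of much smaller order (hence of much larger index), so these classes contribute a negligible amount. The remaining prime-order classes are either unipotent (order $p$) or semisimple (order a prime $r\ne p$), and these two cases are treated in turn.

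For the unipotent classes I would combine their classification with the standard lower bounds on $|x^G|$ in terms of the Jordan form. The essential point is that, for each $d$, the number of unipotent classes $x$ with $|x^G|\leqs q^d$ is bounded by a quantity depending only on $n$ and $d$, with no dependence on $q$: a unipotent class of small index must have Jordan type close to that of the identity, so is essentially determined by a partition of a bounded integer. Letting $t$ measure how far the Jordan type of $x$ departs from the trivial one---so that $t=1$ gives transvections and reflections---one has $|x^G|\geqs q^{c_1tn}$ for an absolute constant $c_1>0$, and at most a bounded multiple of $a_t$ classes with this value of $t$, where $a_t$ denotes the number of partitions of $t$. Hence the unipotent contribution to $\eta_G(1/3)$ is at most a convergent series of the form $\sum_{t\geqs1}(\text{const})\,a_t\,q^{-c_1tn/3}$, dominated by its first term; for $n\geqs6$ this is $<1$ once the constants are pinned down, with the handful of smallest classes (transvections, reflections and the like) estimated from their exact sizes rather than the crude bound.

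For the semisimple classes I would group the elements of order $r$, over all primes $r\ne p$ simultaneously, according to the codimension $j$ of the largest eigenspace of $x$ on the natural module (over $k$). Such a class satisfies $|x^G|\geqs q^{c_2j(n-j)}$ for an absolute constant $c_2>0$, while the number of such classes---across all $r$---is at most a polynomial in $n$ times $q^{j+1}$, since it is essentially determined by the dominant eigenvalue together with a degree-at-most-$j$ factor of the characteristic polynomial. Thus the semisimple contribution to $\eta_G(1/3)$ is bounded by a series of the form $\sum_{j\geqs1}b(n)\,q^{j+1}\,q^{-c_2j(n-j)/3}$ with $b(n)$ a polynomial, again dominated by its $j=1$ term and $<1$ for $n\geqs6$. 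Adding the unipotent and semisimple estimates (and the negligible field/graph contribution) gives $\eta_G(1/3)<1$.

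The main obstacle is the range of small $n$ and small $q$---roughly $6\leqs n\lesssim12$ with $q\in\{2,3\}$---where the exponential-in-$n$ smallness of the individual terms $|x^G|^{-1/3}$ has not yet overwhelmed the polynomial-in-$n$ growth in the number of small classes, and where the four families (linear, unitary, symplectic and orthogonal, the last two behaving differently in even and odd characteristic) must be treated using their precise conjugacy-class data rather than uniform estimates. For these cases I would fall back on exact class sizes and counts, drawn from the standard references on classical groups, to verify the inequality directly; this bookkeeping is the most laborious part of the argument, though it is elementary.
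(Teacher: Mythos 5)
This proposition is not proved in the paper at all: it is imported verbatim from \cite[Proposition 2.2]{B07}, so the only fair comparison is with the argument given there. Your outline matches that argument in all essentials: stratify the prime-order classes by how far the element is from the identity (Jordan type for unipotent elements, the codimension $\nu(x)$ of the largest eigenspace for semisimple elements, with field and graph automorphism classes disposed of separately via their known centralisers), bound the number of classes in each stratum by a quantity of the form $(\text{poly in }n)\cdot q^{O(j)}$, use the lower bounds $|x^G| \gtrsim q^{cj(n-j)}$ from the fixed-point-ratio papers, and sum the resulting geometric-type series. So the strategy is sound and is the intended one. The caveat is that, as written, your proposal is a proof scheme rather than a proof: the constants $c_1,c_2$ and the polynomial $b(n)$ are never pinned down, and you explicitly defer the cases $6\leqs n\lesssim 12$ with $q\in\{2,3\}$ to unperformed bookkeeping. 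Those are precisely the cases where the inequality $\eta_G(1/3)<1$ is tight and where the content of \cite[Proposition 2.2]{B07} really lies (note, for instance, that the dominant term for a transvection in ${\rm U}_6(2)$ already contributes $(|x^G|)^{-1/3}$ of noticeable size, and the unitary class-size bounds carry $(q+1)$-denominators that worsen matters at $q=2$). One further point worth flagging: when $r$ divides $(n,q-\e)$ the centraliser $C_{\bar G}(x)$ can be disconnected and classes split on passage to the simple group, so the class count must be adjusted; this is harmless because such classes are enormous (cf.\ \eqref{e:lu_disc}), but a complete write-up has to say so. In short: right approach, same as the cited source, but the decisive explicit verification is missing.
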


\begin{cor}\label{c:zeta}
Let $G \leqs {\rm Sym}(\O)$ be an almost simple transitive group with nontrivial point stabiliser $N$ and socle $G_0$. Assume $G_0$ is a classical group with natural module of dimension $n \geqs 6$. Then $b(G,N) = 2$ if 
\begin{equation}\label{e:fpr}
{\rm fpr}(x,G/N) \leqs |x^{G}|^{-\frac{2}{3}}
\end{equation}
for all $x \in G$ of prime order.
\end{cor}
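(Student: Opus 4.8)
The plan is to chain together the two probabilistic ingredients already established in this section, namely Proposition~\ref{p:base} and Proposition~\ref{p:eta}. Since both the fixed point ratio ${\rm fpr}(x,G/N)$ and the class size $|x^G|$ are constant on $G$-classes, I would fix a complete set of representatives $x_1,\dots,x_m$ of the conjugacy classes of prime order elements in $G$ and substitute the hypothesis \eqref{e:fpr} directly into the definition of $\what{\mathcal{Q}}(G,N,2)$. This gives
\[
\what{\mathcal{Q}}(G,N,2)=\sum_{i=1}^m |x_i^G|\cdot {\rm fpr}(x_i,G/N)^2 \leqs \sum_{i=1}^m |x_i^G|\cdot |x_i^G|^{-4/3} = \sum_{i=1}^m |x_i^G|^{-1/3}=\eta_G(1/3).
\]
Because $G$ is an almost simple classical group whose natural module has dimension $n\geqs 6$, Proposition~\ref{p:eta} applies and yields $\eta_G(1/3)<1$, hence $\what{\mathcal{Q}}(G,N,2)<1$. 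As $N\ne 1$ by hypothesis, Proposition~\ref{p:base} then gives $b(G,N)=2$, which is exactly the assertion.

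There is essentially no obstacle here: the argument is a direct combination of results proved earlier, the only genuine checks being bookkeeping ones. Concretely, one should confirm that the exponent arithmetic $2\cdot\tfrac{2}{3}-1=\tfrac{1}{3}$ matches the value $\tfrac13$ at which $\eta_G$ is evaluated in Proposition~\ref{p:eta}, and that the hypotheses of that proposition (almost simple with classical socle, natural module dimension $n\geqs 6$) are inherited verbatim from those of the corollary. The substantive difficulty lies not in this corollary but in verifying the input bound \eqref{e:fpr} for the relevant groups, which is the task carried out in the subsequent sections through a careful case-by-case analysis of $|x^G\cap N|$ and $|x^G|$ for $x$ of prime order.
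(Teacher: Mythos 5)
Your argument is correct and is exactly the paper's proof: substituting the hypothesis \eqref{e:fpr} into $\what{\mathcal{Q}}(G,N,2)$ yields $\what{\mathcal{Q}}(G,N,2)\leqs\eta_G(1/3)<1$ by Proposition~\ref{p:eta}, and Proposition~\ref{p:base} then gives $b(G,N)=2$. The exponent bookkeeping and the use of the hypothesis $N\ne 1$ are both handled as in the paper.
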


\begin{proof}
We have
\[
\what{\mathcal{Q}}(G,N,2) \leqs \sum_{i=1}^m |x_i^{G}|^{-\frac{1}{3}} = \eta_G(1/3) < 1
\]
by Proposition \ref{p:eta} and thus $b(G,N) = 2$ by Proposition \ref{p:base}.
\end{proof}

For $x \in G$, it will be convenient to set 
\begin{equation}\label{e:alpha}
\a(x) = \frac{\log |x^G \cap N|}{\log |x^G|},
\end{equation}
noting that the bound in \eqref{e:fpr} holds if and only if $\a(x) \leqs 1/3$. 

\begin{rem}
For some low dimensional classical groups over small fields, it is often possible to establish a stronger form of Proposition \ref{p:eta} with $\eta_G(t)<1$ for some explicit constant $t<1/3$ (see Lemmas \ref{l:lu}, \ref{l:sp1}, \ref{so_odd1} and \ref{so_even1}). In this situation, it then suffices to show that $\a(x) \leqs (1-t)/2$ for all $x \in G$ of prime order.
\end{rem}

\subsection{Maximal tori}\label{ss:tori}

There is a well developed theory of maximal tori in finite groups of Lie type and here we briefly recall some of the key features and results, following \cite[Section~3.3]{Carter85} and \cite[(2.1)--(2.4)]{S83}. 

As in the introduction, let $\bar{G}$ be a simple algebraic group of adjoint type over the algebraic closure $k$ of $\mathbb{F}_p$ and let $F$ be a Steinberg endomorphism of $\bar{G}$ so that $O^{p'}(\bar{G}^F)$ is a simple group of Lie type over $\mathbb{F}_q$. Let $\bar{T}$ be an $F$-stable maximal torus of $\bar{G}$ and observe that there is a natural action of $F$ on the Weyl group $W = N_{\bar{G}}(\bar{T}) / \bar{T}$. We say $w_1, w_2 \in W$ are \emph{$F$-conjugate} if there exists $x \in W$ such that $w_2 = x^F w_1 x^{-1}$. This defines an equivalence relation on $W$ and the equivalence classes are called \emph{$F$-classes}. The \emph{$F$-centraliser} of $w \in W$ is the subgroup
\begin{equation}\label{e:CW}
C_{W,F}(w) = \{ x \in W \,:\, x^F w x^{-1} = w \}
\end{equation}
of $W$. 

\begin{rem} \label{r:Fclasses}
We refer the reader to \cite[Section 22.1]{MT} for more information on the action of $F$ on $W$ and the character group of $\bar{T}$. For our purposes, let us note that the action of $F$ on $W$ is determined by the corresponding automorphism $\rho$ induced by $F$ on the Coxeter diagram of $\bar{G}$ (the non-oriented Dynkin diagram). If $\rho$ is trivial (in which case $\bar{G}^F$ is an untwisted group), then the $F$-classes of $W$ are just the usual conjugacy classes of $W$ and we see that $C_{W,F}(w) = C_W(w)$ is the centraliser of $w$. Similarly, if $\rho$ corresponds to an involutory graph automorphism of the Coxeter diagram and $\bar{G}$ is not of type $D_r$ with $r \geqs 4$ even, then there is a bijection from the $F$-classes of $W$ to the usual conjugacy classes, and we note that $C_{W,F}(w)$ is isomorphic to $C_W(w)$. In the remaining cases, there is a distinction to be made between the $F$-classes of $W$ and the usual conjugacy classes of $W$, and we refer the reader to \cite[Sections 6.5 and 7]{Gager} for more details. Further comments on the case where $G = {\rm P\O}_{2r}^{-}(q)$ and $r \geqs 4$ is even are presented in  Remark \ref{r:minus}.
\end{rem}

A fundamental result (see \cite[Proposition 3.3.3]{Carter85}, for example) states that there is a bijection from the set of $F$-classes of $W$ to the set of $\bar{G}^F$-classes of $F$-stable maximal tori in $\bar{G}$. Fix an $F$-stable maximal torus $\bar{T}_w$ in the $\bar{G}^F$-class corresponding to the $F$-class of $w$. Since any two maximal tori in $\bar{G}$ are conjugate, it follows that $\bar{T}_w = \bar{T}^g$ for some $g \in \bar{G}$ and the $F$-stability of $\bar{T}_w$ implies that $g^Fg^{-1} \in N_{\bar{G}}(\bar{T})$ (indeed, up to $F$-conjugacy we may assume $w$ is the image of $g^Fg^{-1}$ in $W$). By \cite[Proposition~3.3.6]{Carter85} we have
\[
N_{\bar{G}^F}(\bar{T}_w) / \bar{T}_w^F \cong C_{W,F}(w).
\] 
The order of $\bar{T}_w^F$ can be computed using \cite[Proposition~3.3.5]{Carter85} and the following useful upper bound is proved in \cite[(2.4)]{S83}. 

\begin{lem}\label{l:Tsize}
Let $\bar{S}$ be an $F$-stable $\ell$-dimensional torus in $\bar{G}$. Then   
\[ 
|\bar{S}^F| \leqs (q^a+1)^\ell,
\]
where $a=1/2$ if $\bar{G}^F = {}^2B_2(q)$, ${}^2G_2(q)$ or ${}^2F_4(q)$, otherwise $a=1$. 
\end{lem}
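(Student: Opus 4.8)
The plan is to reduce the estimate to an elementary bound on the eigenvalues of the linear map induced by $F$ on the real cocharacter space of $\bar{S}$. Write $V = Y(\bar{S}) \otimes_{\mathbb{Z}} \mathbb{R}$ for the real cocharacter space, so $\dim V = \ell$, and let $\sigma \in \mathrm{GL}(V)$ be the automorphism induced by $F$. The starting point is the standard order formula
\[
|\bar{S}^F| = |\det(\sigma - 1 \mid V)|,
\]
which is \cite[Proposition 3.3.5]{Carter85} in the case of a maximal torus. For a general $F$-stable torus $\bar{S}$ this follows by restriction: the centraliser $C_{\bar{G}}(\bar{S})$ is connected and $F$-stable, so it contains an $F$-stable maximal torus $\bar{T}_w$ of $\bar{G}$, and then $\bar{S}$ lies in $\bar{T}_w$ with $Y(\bar{S})$ an $F$-stable (saturated) sublattice of $Y(\bar{T}_w)$.

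Next I would invoke the structure of the $F$-action on $V$. Some power of $F$ is a standard split Frobenius endomorphism: if $\bar{G}^F$ is untwisted or one of ${}^2A_n(q)$, ${}^2D_n(q)$, ${}^3D_4(q)$, ${}^2E_6(q)$, then $F^N$ acts on $Y(\bar{T})$ as multiplication by $q^N$, where $N \in \{1,2,3\}$ is the order of the relevant graph automorphism; while if $\bar{G}^F$ is ${}^2B_2(q)$, ${}^2G_2(q)$ or ${}^2F_4(q)$, then $F^2$ acts as multiplication by $q$. In either case we may write $\sigma = q^a\tau$ for some $\tau \in \mathrm{GL}(V)$ of finite order, with $a = 1$ in the first case and $a = 1/2$ in the second (since $\sigma^2 = q\cdot 1_V$ forces the eigenvalues of $\sigma$ to have absolute value $q^{1/2}$). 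Consequently, over $\mathbb{C}$ the eigenvalues of $\sigma$ are $q^a\zeta_1, \ldots, q^a\zeta_\ell$, where each $\zeta_j$ is a root of unity.

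The bound now follows from the triangle inequality:
\[
|\bar{S}^F| = |\det(\sigma - 1 \mid V)| = \prod_{j=1}^{\ell} |q^a\zeta_j - 1| \leqs \prod_{j=1}^{\ell}(q^a|\zeta_j| + 1) = (q^a + 1)^\ell .
\]
The only point requiring genuine care is the identification of the scalar $a$ in the Suzuki and Ree cases, where one must remember that it is $F^2$, and not $F$ itself, which is the standard $q$-power Frobenius, so that $\sigma$ has eigenvalues of absolute value $q^{1/2}$; the remainder of the argument is formal and I anticipate no real obstacle.
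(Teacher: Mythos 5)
Your argument is correct and is essentially the proof given in the cited source: the paper itself offers no proof of Lemma \ref{l:Tsize}, simply quoting \cite[(2.4)]{S83}, and Seitz's argument there is exactly the one you give, namely the order formula $|\bar{S}^F| = |\det(F-1)|$ on the (co)character lattice of $\bar{S}$ together with the fact that the eigenvalues of $F$ all have absolute value $q^a$, followed by the triangle inequality. Your identification of $a=1/2$ in the Suzuki and Ree cases via $F^2$ acting as multiplication by $q$ is the correct point of care, so there is nothing to add.
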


Set $G = O^{p'}(\bar{G}^F)$. Then each $\bar{G}^F$-class of $F$-stable maximal tori in $\bar{G}$ intersects $G$ in a single $G$-class and we set $T = G \cap \bar{T}_w^F$. If we now define $d = |\bar{G}^F:G|$, which is the order of the centre of the simply connected version of $\bar{G}$, then $|\bar{T}_w^F:T| = d$ and it follows that 
\[ 
N= N_G(\bar{T}_w) = T.R,
\] 
where $R = C_{W,F}(w)$ as above. Here it is important to note that $N$ may be a proper subgroup of $N_{G}(\bar{T}_w^F)$. For example, if $G = {\rm L}_n(2)$ and $w = 1$, then $\bar{T}_w^F$ is trivial and $N \cong S_n$ is the subgroup of monomial matrices in $G$. 

We close with the following result, which will be useful in our proof of Theorem \ref{t:main} for exceptional groups of Lie type. In the statement, $\rho_R$ denotes the number of reflections in $R$.  

\begin{lem}\label{l:root}
Let $N = T.R$ as above. If $x \in N$ is a root element in $G$, then $p=2$ and
\[
|x^G \cap N| \leqs \rho_R |T|. 
\]
\end{lem}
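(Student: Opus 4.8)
The plan is to exploit the fact that a root element $x$ has order $p$, so that $x \in N = T.R$ forces a careful accounting of where $x$ can sit relative to the normal subgroup $T$. First I would observe that since $x$ is a root element, it is unipotent of order $p$, hence $p \mid |N|$. The torus $T$ is a semisimple abelian group, so $p \nmid |T|$; consequently $x \notin T$ and the image $\bar{x}$ of $x$ in the quotient $R = C_{W,F}(w)$ is a nontrivial element of $R$ of order $p$. So $x$ lies in the coset $T\bar{x}$ (more precisely, in the preimage of $\la \bar{x}\ra$), and counting $G$-conjugates of $x$ inside $N$ amounts to counting elements of $\bigcup_{i} T \bar{x}^i$ that happen to be $G$-conjugate to $x$.

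Next I would bound this count. The key point I expect to use is that a root element has order $p = 2$ in all the relevant exceptional-group cases — indeed, if $p$ is odd then one checks (using the structure of the ambient algebraic group and the fact that $N = T.R$ with $|R|$ controlled by the Weyl group) that $N$ contains no root elements, which forces $p = 2$ as claimed in the statement. With $p = 2$, the element $\bar{x} \in R$ is an involution. Now the crucial observation is that a root element is a \emph{long} root element (or the appropriate analogue), and its action by conjugation, combined with the structure of $T.R$, means that every element of $N$ that is $G$-conjugate to $x$ must project to a \emph{reflection} in $R$: this is where the factor $\rho_R$ (the number of reflections in $R$) enters. Heuristically, a root element inverts a natural one-dimensional piece and acts on $\bar{T}$ like a reflection of the root datum, so its image in the Weyl group is a reflection; the same holds after intersecting with $R = C_{W,F}(w)$. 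Thus the $G$-conjugates of $x$ in $N$ are contained in $\bigcup_{r} Tr$, the union running over the (at most $\rho_R$) reflections $r$ in $R$, and since each coset $Tr$ has size $|T|$ we get
\[
|x^G \cap N| \leqs \rho_R |T|,
\]
as required.

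The main obstacle, I expect, is justifying cleanly that every $G$-conjugate of a root element lying in $N$ has image a \emph{reflection} in $R$ — i.e. ruling out that such a conjugate could project to some other involution (or element of order $p$) in $R$. One safe route is to argue at the level of the algebraic group: a root element $x \in \bar{G}$ lies in a unique root subgroup $\bar{U}_\gamma$, and if a conjugate $x^g$ normalises $\bar{T}_w$ then analysing $C_{\bar{G}}(x^g)^\circ$ and the action of $x^g$ on the cocharacter lattice of $\bar{T}_w$ shows that $x^g$ acts as the reflection $s_\gamma$; intersecting with $G$ and passing to $R = C_{W,F}(w)$ then gives that the image is a reflection in $R$. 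A subtlety to watch is the twisted cases (where $F$ acts nontrivially on $W$ and on the root system), where "reflection" should be interpreted in $R$ as a $C_{W,F}(w)$-reflection and one should confirm the count $\rho_R$ is the right one; but since the lemma is only invoked for exceptional groups of Lie type, one can, if needed, verify the handful of relevant $(\bar G, w)$ configurations directly rather than in full generality. Finally I would record the trivial bound that if $N$ contains no root element the statement is vacuous, so we may assume $x$ exists, which is what pins down $p = 2$.
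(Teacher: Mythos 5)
Your overall strategy is the same as the paper's: observe that $p\nmid|T|$ so a root element in $N$ projects to a nontrivial element of $R$, argue that this image must be a reflection, and then bound $|x^G\cap N|$ by the union of the $\rho_R$ cosets $Tr$ with $r$ a reflection. However, the two facts that carry all the weight --- that $p=2$ and that the image in $R$ is a reflection --- are not actually established in your write-up, and you essentially say so yourself. Your justification of $p=2$ ("if $p$ is odd then one checks \ldots that $N$ contains no root elements, which forces $p=2$") is a restatement of the claim, not an argument; and your justification of the reflection property is explicitly labelled a heuristic, with the genuine argument deferred to an unexecuted analysis of $C_{\bar{G}}(x^g)^{\circ}$ and the cocharacter lattice. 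The paper closes exactly this gap by citing \cite[Proposition 1.13]{LLS}, which states that if $y\in N_{\bar{G}}(\bar{T})$ is a root element then $p=2$ and $y$ centralises a subtorus of $\bar{T}$ of codimension $1$; that is precisely the statement you need, and without it (or a proof of it) your argument is incomplete.

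Two smaller points. First, your intermediate claim that the $G$-conjugates of $x$ in $N$ lie in $\bigcup_i T\bar{x}^i$ is false as stated --- a conjugate of $x$ lying in $N$ may project to any suitable element of $R$, not just a power of $\bar{x}$ --- though this is superseded once you restrict to cosets of reflections. Second, your concern about the twisted cases is reasonable but resolves itself once the algebraic-group statement is in hand: the codimension-$1$ fixed subtorus condition is a statement about the action on $\bar{T}_w$ itself, so ``reflection in $R$'' has an unambiguous meaning and $\rho_R$ is the correct count.
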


\begin{proof}
By \cite[Proposition 1.13]{LLS}, if $y \in N_{\bar{G}}(\bar{T})$ is a root element, then $p=2$ and $y$ centralises a subtorus in $\bar{T}$ of codimension $1$. In particular, each root element in $N$ corresponds to a reflection in $R = N_{\bar{G}^F}(\bar{T}_w) / \bar{T}_w^F$, so $N$ contains at most $\rho_R |T|$ root elements and the result follows.  
\end{proof}

\subsection{Computational methods}\label{ss:comp}

In this final preliminary section, we briefly discuss some of the main computational methods we use in the proof of Theorem \ref{t:main}. In order to perform these computations, we use {\sc Magma} \cite{Magma}, version V2.26-12.

There are essentially two different ways in which we apply computational methods. Firstly, if $G$ is a low rank simple group of Lie type defined over a small field, then it may be possible to verify Theorem \ref{t:main} directly, typically by constructing $G$ and $N$ as permutation groups and using random search to find an element $x \in G$ such that $N \cap N^x = 1$. These are the sort of computations we focus on here, with Lemmas \ref{l:small} and \ref{l:excepcomp} as our main results. Note that we do not need to construct $N$ explicitly in all cases; indeed, to conclude that $b(G,N) = 2$, it suffices to show that there exists an overgroup $L$ of $N$ with $L \cap L^x = 1$ for some $x \in G$.  

For other groups where this direct approach is not feasible, we may still be able to use {\sc Magma} to obtain useful information about $G$ and $N$, which can then be combined with the probabilistic techniques described in Section \ref{ss:prob}. For example, see Lemmas \ref{l:lu}, \ref{l:sp1}, \ref{so_odd1} and \ref{so_even1}, where we use {\sc Magma} to show that $\eta_G(t)<1$ for various classical groups $G$ and explicit constants $t<1/3$, which allows us to work with slightly weaker fixed point ratio estimates in order to establish the bound $\widehat{\mathcal{Q}}(G,N,2)<1$. Similarly, for certain exceptional groups we will use {\sc Magma} to help us estimate $\widehat{\mathcal{Q}}(G,N,2)$. For instance, see the proof of Proposition \ref{p:e6}, where the groups $G = E_6^{\e}(2)$ require special attention.

The following lemma establishes Theorem \ref{t:main} for some specific low dimensional classical groups defined over small fields. 

\begin{lem}\label{l:small}
The conclusion to Theorem \ref{t:main} holds if $G$ is one of the following groups:
\[
\begin{array}{c}
{\rm L}_6^{\e}(2), \, {\rm L}_{6}^{\e}(3), \, {\rm L}_7^{\e}(2), \, {\rm L}_8^{\e}(2) \\
{\rm Sp}_6(2), \, {\rm PSp}_6(3), \, {\rm Sp}_6(4), \, {\rm PSp}_6(5), \, {\rm Sp}_8(2), \, {\rm PSp}_8(3) \\
\O_7(3), \, \O_{8}^{\e}(2),\, {\rm P\O}_8^{\e}(3), \, \O_9(3),\, \O_{10}^{\e}(2)
\end{array}
\]
\end{lem}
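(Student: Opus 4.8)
The plan is to verify Theorem~\ref{t:main} for each of the listed groups by direct computation in {\sc Magma}, following the strategy outlined in Section~\ref{ss:comp}. For a fixed $G$ in the list, the first step is to enumerate the $G$-classes of $F$-stable maximal tori of $\bar G$: by the bijection recalled in Section~\ref{ss:tori}, these correspond to the $F$-classes of the Weyl group $W$, and for each such class we record the representative $w$, the torus order $|\bar T_w^F|$ (via \cite[Proposition~3.3.5]{Carter85}), the integer $d = |\bar G^F : G|$, and the structure of $R = C_{W,F}(w)$, so that $N = T.R$ with $T = G \cap \bar T_w^F$. Since all the groups here have small rank and are defined over $\mathbb{F}_q$ with $q \in \{2,3,4,5\}$, the Weyl group is one of $S_n$ ($n \leqs 8$) or a Weyl group of type $B$ or $D$ of rank at most $5$, so this enumeration is entirely routine.

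The second step, for each torus class, is to construct $G$ as a permutation group (or in a convenient matrix representation) together with the subgroup $N$, and then to run a random search for $x \in G$ with $N \cap N^x = 1$; if such an $x$ is found we conclude $b(G,N) = 2$ by definition, and if not — which should happen only for the handful of cases destined for Table~\ref{tab:main}, namely $\mathrm{Sp}_6(2)$ and $\O_8^+(2)$ with their split tori — we instead compute $b(G,N)$ exactly, either by searching for a triple (or quadruple) of conjugates of $N$ with trivial intersection and checking, by running over double coset representatives or a sufficiently large random sample, that no pair suffices. As noted in Section~\ref{ss:comp}, in the affirmative cases it is enough to exhibit an overgroup $L \geqs N$ with $L \cap L^x = 1$, which we can exploit when $N$ itself is awkward to build but sits inside a readily available maximal subgroup of $G$. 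For the larger groups in the list (e.g. $\mathrm{L}_8^\e(2)$, $\O_{10}^\e(2)$), constructing $N$ directly may be the bottleneck, so here we will lean on this overgroup reduction, taking $L$ to be an appropriate maximal parabolic or classical subgroup containing $N$.

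The main obstacle I anticipate is computational feasibility rather than any conceptual difficulty: for the groups of larger order in the list, a naive random search for $x$ with $N \cap N^x = 1$ may require many trials, and verifying $b(G,N) \geqs 3$ in the genuine exceptions requires certifying that \emph{no} pair of conjugates intersects trivially, which is a statement about all $x \in G$ and hence needs either an exhaustive double-coset computation or a theoretical argument (e.g. showing every element of $N$ of a given prime order is fixed by too many points, via a fixed-point-ratio estimate as in Section~\ref{ss:prob}). For $\mathrm{Sp}_6(2)$, $\O_8^+(2)$ and the small unitary groups this is manageable by brute force since $|G|$ is small; for the split torus of $\O_8^+(2)$ one also has the triality symmetry to keep the structure of $N = 3^4{:}(2^3{:}S_4)$ under control. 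Once every torus class of every listed $G$ has been checked in this way, the conclusion of Theorem~\ref{t:main} for these groups follows, with the cases contributing to Table~\ref{tab:main} being exactly $(\mathrm{Sp}_6(2), 3^3{:}(S_2 \wr S_3))$ and $(\O_8^+(2), 3^4{:}(2^3{:}S_4))$.
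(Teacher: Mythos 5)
Your approach is essentially the same as the paper's: enumerate the torus classes via the $F$-classes of $W$, construct $N$ (or a convenient overgroup) in {\sc Magma}, use random search to find $x$ with $N \cap N^x = 1$, and fall back on exact computations (double cosets, or the lower bound $b(G,N) \geqs \log_d|G|$) for the genuine exceptions. The paper's implementation is slightly more specific for the cases with $T \ne 1$ --- it locates $N$ as $N_G(K)$ for $K$ an abelian subgroup of order $|T|_p$ normalised by a Sylow $p$-subgroup, with $p$ the largest prime dividing $|T|$ --- but this is a matter of implementation rather than substance.

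There is, however, one concrete error in your conclusion: the exceptional cases arising from this lemma are not only $({\rm Sp}_6(2), 3^3{:}(S_2 \wr S_3))$ and $(\O_8^{+}(2), 3^4{:}(2^3{:}S_4))$, but also $({\rm U}_6(2), 3^4{:}S_6)$, which appears in Table \ref{tab:main} with $b(G,N)=3$ and is covered by this lemma since ${\rm U}_6(2) = {\rm L}_6^{-}(2)$. This case is also the one where the cheap lower bound fails: for ${\rm Sp}_6(2)$ and $\O_8^{+}(2)$ one has $\log_d|G|>2$ with $d = |G:N|$, so $b(G,N) \geqs 3$ comes for free, whereas for ${\rm U}_6(2)$ one genuinely needs the exhaustive $(N,N)$-double-coset computation to certify that $N \cap N^x \ne 1$ for all $x \in G$. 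Your procedure, faithfully executed, would of course detect this case (the random search for a regular pair would fail), but as written the proposal asserts an incomplete list of exceptions, so the stated outcome does not match Theorem \ref{t:main}.
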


\begin{proof}
As noted above, we will use {\sc Magma} \cite{Magma} to verify the result, working with a standard permutation representation of $G$ of degree $n \leqs 22204$ (with equality if $G = {\rm U}_6(3)$). Write $N = T.R$ as above and recall from Section \ref{ss:tori} that we can describe the possibilities for $N$ in terms of the Weyl group of $G$ (for example, see the opening paragraphs in Sections \ref{s:lu}--\ref{s:o_even} below). Also note that the precise cyclic structure of $T$ is given in \cite{BuG}.

We start by handling the following possibilities for $(G,N)$: 
\begin{equation}\label{e:list0}
({\rm U}_6(2),3^4{:}S_6), \, ({\rm Sp}_6(2),3^3{:}(S_2 \wr S_3)), \, (\O_8^{+}(2),3^4{:}(2^3{:}S_4)).
\end{equation}
Note that each of these cases appears in Table \ref{tab:main} and we need to prove that $b(G,N) = 3$. First we construct $N$ by observing that $N = N_G(K)$, where $K$ is an elementary abelian normal subgroup of a Sylow $3$-subgroup of $G$ of order $3^4$, $3^3$ and $3^4$, respectively. In each case, we can find elements $x,y \in G$ by random search such that $N \cap N^x \cap N^y = 1$, which implies that  $b(G,N) \leqs 3$ (in the same way, we can identify an element $x \in G$ such that $|N \cap N^x| = 2,4,4$ in the respective cases, so these cases still satisfy the main statement in Vdovin's conjecture). For $G = {\rm Sp}_6(2)$ and $\O_8^{+}(2)$ we have $\log_d|G|>2$, where $d = |G:N|$, whence $b(G,N) = 3$ as claimed. For $G = {\rm U}_6(2)$, we can use {\sc Magma} to find a complete set $S$ of $(N,N)$ double coset representatives in $G$ and we then check that $|NxN|<|N|^2$ for all $x \in S$. This implies that $N \cap N^x \ne 1$ for all $x \in G$, so $b(G,N) \geqs 3$ and the result follows. 

Now let us turn to the remaining cases, where we need to show that $b(G,N)=2$. First consider the special case $T=1$, which only arises when $R=W$ and $G$ is one of the following:
\[
{\rm L}_6(2), \, {\rm L}_7(2), \, {\rm L}_8(2), \, {\rm Sp}_6(2), \, {\rm Sp}_8(2), \, \O_{8}^{+}(2), \, \O_{10}^{+}(2).
\]
In each of these cases, it is straightforward to construct $N = W$ as a subgroup of $G$ by applying some of the in-built functions in {\sc Magma} for handling groups of Lie type. It is then a routine exercise to check that there exists an element $x \in G$ with $N \cap N^x = 1$ and thus $b(G,N) = 2$.  

Finally, let us assume $T \neq 1$, excluding the cases in \eqref{e:list0} handled above. Here we adopt the following uniform approach, which is straightforward to implement in {\sc Magma}. As above, we work with a standard permutation representation of $G$ and we write $|T|_p = p^a$, where $p$ is defined to be the largest prime divisor of $|T|$. We start by constructing a set of representatives of the conjugacy classes of abelian subgroups of $N_G(P)$ of order $p^a$, where $P$ is a Sylow $p$-subgroup of $G$. For each representative $K$, we construct $L=N_G(K)$ and we discard $K$ if $|L|$ is not divisible by $|N|$. We now have a collection of subgroups of the form $N_G(K)$, at least one of which contains a conjugate of $N$ (this is because $N$ normalises $O_p(T)$, which is an abelian subgroup of order $p^a$). Fix a subgroup $L = N_G(K)$ whose order is divisible by $|N|$. We then use random search to see if there exists an element $x \in G$ with $L \cap L^x = 1$. If we find such an element, then we move to the next compatible subgroup of the form $N_G(K)$ and repeat. On the other hand, if the random search is inconclusive, then we construct a set of representatives of the $L$-classes of subgroups $J$ of $L$ of order $|N|$ and we use random search once again to show that $b(G,J) = 2$. In this way, one can check that $b(G,N) = 2$ in every case. 
\end{proof}

For the exceptional groups of Lie type, we present the following result.

\begin{lem}\label{l:excepcomp}
Let $G$ be one of the following groups
\[
G_2(3), \, G_2(4), \, G_2(5), \, {}^3D_4(2), \, {}^3D_4(4), \, {}^2F_4(2)'
\]
and let $N = N_G(\bar{T})$ as above. Then $b(G,N) = 2$.      
\end{lem}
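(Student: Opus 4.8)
The plan is to verify the claim directly by computation in {\sc Magma}, following the same general strategy employed in the proof of Lemma \ref{l:small}. First I would recall that for each of the six groups $G = G_2(3), G_2(4), G_2(5), {}^3D_4(2), {}^3D_4(4), {}^2F_4(2)'$, the subgroup $N = N_G(\bar{T})$ has the form $N = T.R$ with $R = C_{W,F}(w)$, where $W$ is the relevant Weyl group (of type $G_2$, $F_4$, or $G_2$ again in the triality case, after accounting for the action of $F$). The $F$-classes of $W$ are finite in number and small, and the cyclic structure of each torus $T$ is explicitly recorded in \cite{BuG}; so there are only a handful of subgroups $N$ to consider for each $G$. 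Since these groups are small and possess permutation or matrix representations of manageable degree, each $N$ can be constructed explicitly — typically as the normaliser $N_G(K)$ of an appropriate characteristic subgroup of a maximal torus, or via the in-built Lie-theoretic functions in {\sc Magma} when $T$ is trivial (which occurs for some tori when $q=2$).

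The key steps, carried out for each $G$ and each relevant $w$, are as follows. I would construct $G$ in a convenient representation, build the subgroup $N$, and then use random search to locate an element $x \in G$ with $N \cap N^x = 1$; by Proposition \ref{p:base} (or rather its defining consequence that $b(G,N)=2 \iff N \cap N^x = 1$ for some $x$), this establishes $b(G,N) = 2$ in that case. As in Lemma \ref{l:small}, it is not always necessary to construct $N$ exactly: it suffices to exhibit an overgroup $L \geqslant N$ with $L \cap L^x = 1$ for some $x$, since then certainly $N \cap N^x = 1$. Should random search prove inconclusive for some torus, I would fall back on enumerating a set of representatives of the $L$-classes of subgroups of $L$ of order $|N|$ and running the search on each, exactly as in the final paragraph of the proof of Lemma \ref{l:small}. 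One subtlety worth flagging: for ${}^2F_4(2)'$ and ${}^3D_4(2)$, a few of the tori are trivial or very small, so the corresponding $N$ is just (a conjugate of) the twisted Weyl group $R$; these cases are the easiest, as $N$ is then directly accessible and the search terminates quickly.

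I do not expect a serious obstacle here, since the ambient groups are all small and none of the borderline configurations of Table \ref{tab:main} occurs among exceptional groups of this size (indeed, \cite[Proposition 4.2(i)]{BTh_ep} already gives $b(G,N)=2$ whenever $N$ is a maximal subgroup of an almost simple exceptional group, and these six groups are precisely the small cases where $N$ need not be maximal, or where a direct check is cleaner than invoking the fixed point ratio machinery of later sections). The only mild annoyance is bookkeeping: one must make sure that every $F$-class of the Weyl group — hence every $G$-class of maximal tori — is covered, including the ones where $F$ acts nontrivially on $W$ (the triality groups ${}^3D_4(q)$ and the Ree group ${}^2F_4(2)'$), using the description of $C_{W,F}(w)$ from Section \ref{ss:tori} together with the torus orders in \cite{BuG}. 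Having run the search successfully for every $(G,N)$ on the list, we conclude that $b(G,N) = 2$ in all cases, as required.
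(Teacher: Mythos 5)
Your proposal is correct and follows essentially the same computational strategy as the paper: construct each torus normaliser (or a suitable overgroup, or all subgroups of the right order) in {\sc Magma} and find $x$ with $N \cap N^x = 1$ by random search, falling back on maximality via \cite[Proposition 4.2]{BTh_ep} where convenient — which is exactly how the paper handles the awkward case ${}^3D_4(4)$, where the generic subgroup-enumeration approach is ineffective. The only quibble is a harmless aside: none of the maximal tori of ${}^3D_4(2)$ is actually trivial (the split one is $C_7$), though a trivial torus does occur for ${}^2F_4(2)$.
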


\begin{proof}
Once again we use {\sc Magma} \cite{Magma}. Let $\mathcal{A}$ be the set of orders of the subgroups $N$ we need to consider. For instance, if $G = G_2(q)$, then $W = D_{12}$ has $6$ conjugacy classes (which coincide with the $F$-classes of $W$; see Remark \ref{r:Fclasses}) and we see that $N$ is one of the following
\[
(C_{q \pm 1})^2.D_{12}, \, C_{q^2\pm q+1}.6, \, C_{q^2-1}.2^2,
\]
noting that $G$ has two conjugacy classes of maximal tori of the form $C_{q^2-1}$.   
So for example, if $q=5$ then $\mathcal{A} = \{96, 126, 186, 192, 432\}$. The corresponding information for the relevant twisted groups can be found in \cite[Table 1.1]{DerMic} for ${}^3D_4(q)$ and \cite{Shin} for ${}^2F_4(2)$.

Suppose $G \ne {}^3D_4(4), {}^2F_4(2)'$ and $|N| = n \in \mathcal{A}$. First we construct $G$ as a permutation group via the function \texttt{AutomorphismGroupOfSimpleGroup} and we then identify a set of  representatives of the conjugacy classes of order $n$ subgroups of $G$. It is then straightforward to find a random element $x \in G$ such that $H \cap H^x = 1$ for each representative $H$ and we conclude that $b(G,N) = 2$. 

A very similar argument applies when $G = {}^2F_4(2)'$. Here it is convenient to work in $L = {}^2F_4(2) = G.2$, noting that the possibilities for $|N_L(\bar{T})|$ can be read off from \cite[Table III]{Shin} (also see \cite[Table 7.3]{Gager}). As before, we construct a set of representatives of the $L$-classes of subgroups $H$ of order $|N_L(\bar{T})|$ and we then use random search to find an element $x \in G$ with $H \cap H^x = 1$. Once again, this allows us to deduce that $b(G,N) = 2$ in all cases.

Finally, let us assume $G = {}^3D_4(4)$. As above, we can construct $G$ as a permutation group of degree $328965$, but the \texttt{Subgroups} function is ineffective and so a slightly different approach is needed. Let $T$ be a maximal torus of $G$. If $T = (C_{21})^2$, $(C_{13})^2$ or $C_{241}$ then $N$ is a maximal subgroup of $G$ and thus $b(G,N) = 2$ by \cite[Proposition 4.2]{BTh_ep}. So to complete the proof, we may assume $T = C_{65} \times C_5$, $C_{63} \times C_3$, $C_{315}$ or $C_{195}$. To handle these cases, we first construct a Sylow $3$-subgroup $H$ and a Sylow $5$-subgroup $K$ of $G$, working with the given permutation representation of $G$. Now $H$ has an element $x$ of order $3$ such that $C_G(x) = {\rm L}_2(64) \times C_3$ contains the maximal tori $C_{63} \times C_3$ and $C_{195}$. Similarly, we find an element $y \in K$ of order $5$ such that $C_G(y) = {\rm L}_2(64) \times C_5$ contains $C_{65} \times C_5$ and $C_{315}$. We can now construct $N$ by taking the normalisers of these tori and it is easy to check that $b(G,N) = 2$ by random search. 
\end{proof}

\section{Linear and unitary groups}\label{s:lu}

In this section, we prove Theorem \ref{t:main} for the classical groups with socle ${\rm L}_{n}^{\e}(q)$. The low dimensional groups with $n<6$ require special attention and they will be treated separately at the end of the section. We begin by recording some preliminary observations.

Write $G = O^{p'}(\bar{G}^F)$, where $\bar{G} = {\rm PSL}_{n}(k)$ is the ambient simple algebraic group defined over the algebraic closure $k$ of $\mathbb{F}_p$ and $F$ is a suitable Steinberg endomorphism of $\bar{G}$. Fix an $F$-stable maximal torus $\bar{T}$ of $\bar{G}$ and set $\bar{N} = N_{\bar{G}}(\bar{T}) = \bar{T}.W$, where $W = S_n$ is the Weyl group of $\bar{G}$. We may assume $\bar{T}$ is the image (modulo scalars) of the group of diagonal matrices in ${\rm SL}_n(k)$ with respect to a standard basis $\{e_1, \ldots, e_n\}$ of the natural module $\bar{V}$ for ${\rm SL}_n(k)$. If we set $\bar{V}_i = \la e_i \ra$, then $\bar{N}$ is the stabiliser in $\bar{G}$ of the direct sum decomposition $\bar{V} =  \bar{V}_1 \oplus \cdots \oplus \bar{V}_n$ and we will abuse notation by writing $(\l_1, \ldots, \l_n)\s$ for a typical element of $\bar{N}$, where $\l_i \in k^{\times}$ and $\s \in S_n$.
 
Recall from Section \ref{ss:tori} that there is a bijection from the set of $\bar{G}^F$-classes of $F$-stable maximal tori in $\bar{G}$ to the set of $F$-classes in $W$. As recorded in Remark \ref{r:Fclasses}, the $F$-classes in $W$ are in bijection with the usual conjugacy classes in this case, which are in turn parameterised by the set of partitions of $n$. Let $w \in W$ be a permutation with cycle-shape $\mu = (n^{a_n}, \ldots, 1^{a_1})$, where $a_{\ell} \geqs 0$ is the multiplicity of $\ell$ as a part of $\mu$. Then in the notation of Section \ref{ss:tori}, the $W$-class of $w$ corresponds to an $F$-stable maximal torus $\bar{T}_w$ of $\bar{G}$ and we set 
$N = N_G(\bar{T}_w) = T.R$. Here $T$ is the image in $G$ of the intersection $\widehat{T} \cap {\rm SL}_{n}^{\e}(q)$, where 
\[
\widehat{T} = \left(C_{q^{n}-\e^{n}}\right)^{a_n} \times \cdots \times \left(C_{q-\e}\right)^{a_1}
\]
is a maximal torus of ${\rm GL}_{n}^{\e}(q)$ (the precise cyclic structure of $T$ is given in \cite[Section 2]{BuG}). In addition, 
\[
R = C_{W,F}(w) \cong C_W(w) = \left(C_n \wr S_{a_n}\right) \times \cdots \times \left(C_1 \wr S_{a_1}\right).
\]

We refer the reader to \cite[Chapter 3]{BG} for detailed information on prime order elements and their conjugacy classes in the finite classical groups. We will also work with the following parameter, noting that bounds on $|x^G|$ in terms of $\nu(x)$ are presented in \cite[Section 3]{fpr2}. 

\begin{defn}\label{d:nu}
Let $G$ be a finite simple classical group over $\mathbb{F}_q$ with natural module $V$ and set $\bar{V} = V \otimes k$, where $k$ is the algebraic closure of $\mathbb{F}_q$. Given $x \in G$, let $\hat{x} \in {\rm GL}(V)$ be a pre-image of $x$ and set
\[
\nu(x) = \min\left\{\dim \, [\bar{V},\l \hat{x}] \,:\, \l \in k^{\times}\right\},
\]
where $[\bar{V},y] = \la v - vy \,:\, v \in \bar{V}\ra$. Note that $\nu(x)$ is equal to the codimension of the largest eigenspace of $\hat{x}$ on $\bar{V}$.
\end{defn}

\begin{rem}\label{r:disc}
Let $G = {\rm L}_n^{\e}(q) = L/Z$, where $L = {\rm SL}_{n}^{\e}(q)$ and $Z = Z(L)$. 
Let $x \in G$ be an element of prime order $r$. By \cite[Lemma 3.11]{fpr2}, if $r$ is odd then either 
\begin{itemize}\addtolength{\itemsep}{0.2\baselineskip}
\item[{\rm (a)}] $x = Z\hat{x}$, where $\hat{x} \in L$ has order $r$; or
\item[{\rm (b)}] $r$ divides $(n,q-\e)$, $C_{\bar{G}}(x)$ is disconnected and 
\begin{equation}\label{e:lu_disc}
|x^G| > \frac{1}{2r}\left(\frac{q}{q+1}\right)^{r-1}q^{n^2\left(1-\frac{1}{r}\right)}.
\end{equation}
\end{itemize}
It is straightforward to see that the same conclusion also holds when $r=2$. In particular, if $r \ne p$ and $C_{\bar{G}}(x)$ is connected, then $x$ lifts to an element of order $r$ in $L$.
\end{rem}

Recall that if $X$ is a subset of $G$ and $r$ is a positive integer, then $i_r(X)$ denotes the number of elements of order $r$ in $X$. The following upper bound will be useful.

\begin{lem}\label{l:lu1}
Let $G = {\rm L}_n^{\e}(q)$ and define $N = T.R$ as above. Then 
\[
i_r(N) \leqs \sum_{j=0}^{\lfloor n/r \rfloor} \frac{n!}{j!(n-jr)!r^j}(q+1)^{n-1-j}
\]
for every prime divisor $r$ of $|N|$.
\end{lem}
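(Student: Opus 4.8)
The plan is to bound $i_r(N)$ by first passing to the overgroup $\widehat{N} = \widehat{T}.R$ inside $\mathrm{GL}_n^{\e}(q)$ (strictly, inside the monomial subgroup $\mathrm{GL}_1^{\e}(q) \wr S_n$ of $\mathrm{GL}_n^{\e}(q)$), since every element of $N$ lifts to an element of $\widehat{N}$ of order dividing $r^{\,?}$ — more usefully, since $|N|$ divides $|\widehat{N}|$ and an order-$r$ element of $N$ has a preimage of order $r$ or $rp^{?}$; the cleanest route is simply to count order-$r$ elements in the monomial group $M = D \rtimes S_n$, where $D = (C_{q-\e})^n$ or the relevant torus, because $N \leqs M$ (as recalled in the opening of Section \ref{s:lu}, $\bar N$ is the stabiliser of the decomposition $\bar V = \bar V_1 \oplus \cdots \oplus \bar V_n$, so its finite analogue lies in the monomial group), and $i_r(N) \leqs i_r(M)$. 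So the first step is to reduce to counting order-$r$ elements $g = t\sigma$ in $M$ with $t = (\l_1,\dots,\l_n)$ and $\sigma \in S_n$.

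The second step is the combinatorial heart: for $g = t\sigma$ of order $r$, the permutation part $\sigma$ has order dividing $r$, hence (as $r$ is prime, or $r = p$) $\sigma$ is a product of $j$ disjoint $r$-cycles and $n - jr$ fixed points, for some $0 \leqs j \leqs \lfloor n/r\rfloor$; the number of such $\sigma$ is exactly $\dfrac{n!}{j!\,(n-jr)!\,r^{j}}$, which is precisely the combinatorial coefficient appearing in the statement. For each fixed $\sigma$, I then count the diagonal parts $t$ for which $t\sigma$ has order $r$ (equivalently order dividing $r$): writing $g^r = t \cdot {}^{\sigma}t \cdots {}^{\sigma^{r-1}}t$, on each $r$-cycle support of $\sigma$ this product is the ``norm'' $\l_{i_1}\l_{i_2}\cdots\l_{i_r}$ sitting in a single coordinate, so the condition $g^r = 1$ imposes one equation per $r$-cycle, while the $n - jr$ fixed coordinates of $\sigma$ must each satisfy $\l_i^{r} = 1$. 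The number of $t$ satisfying these constraints is at most $(\text{size of the cyclic factor})^{\,n-j}$ — one free parameter per cyclic factor is killed for each of the $j$ cycles, leaving $n - j$ free coordinates — and by Lemma \ref{l:Tsize} each cyclic factor of $\widehat T$ has order at most $q+1$ (indeed at most $q-\e \leqs q+1$ for the split part, $q^{\ell}-\e^{\ell}$ in general, but we only need the crude per-coordinate bound after distributing), so the count of admissible $t$ for a given $\sigma$ is at most $(q+1)^{n-j}$. Actually, since the torus $\widehat T$ satisfies a determinant-one type constraint when we pass from $\mathrm{GL}$ to $\mathrm{SL}$ and then quotient by the centre, one more coordinate's worth of freedom is removed, giving $(q+1)^{n-1-j}$; this is where the exponent $n-1-j$ in the statement comes from.

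Combining, $i_r(N) \leqs i_r(M) \leqs \sum_{j=0}^{\lfloor n/r\rfloor} \dfrac{n!}{j!\,(n-jr)!\,r^{j}}\,(q+1)^{n-1-j}$, as required; the sum over $j$ simply accounts for all possible cycle types of the permutation part.

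I expect the main obstacle to be bookkeeping the passage between $\mathrm{GL}_n^{\e}(q)$, $\mathrm{SL}_n^{\e}(q)$, and the simple quotient $G = L/Z$ carefully enough to secure the sharp exponent $n - 1 - j$ rather than a weaker $n - j$: one must check that the determinant (norm) condition and the central quotient together cost exactly one coordinate of freedom uniformly across all cycle types, and that this is not wiped out (or doubled up) when $r$ divides $(n, q-\e)$ and $C_{\bar G}(x)$ is disconnected, as flagged in Remark \ref{r:disc}. The case $r = p$ (where order-$p$ monomial elements can have nontrivial diagonal part only in characteristic $p$, and the ``norm $= 1$'' equations behave slightly differently) will need a separate, but easier, check — there the permutation part must be a product of $p$-cycles and the diagonal contribution on fixed points is forced, so the same bound holds a fortiori. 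A minor additional point is handling small $q$ or the twisted case $\e = -$, where the cyclic factor $C_{q^{\ell}-\e^{\ell}}$ of $\widehat T$ supported on an $\ell$-cycle of the underlying (Frobenius-twisted) permutation is larger than $q+1$ when $\ell > 1$; one resolves this by noting that such a factor is distributed over $\ell$ coordinates, so it contributes at most $(q+1)^{\ell}$ in aggregate by Lemma \ref{l:Tsize} (with the twisted Frobenius $q^{\ell} - \e^{\ell} \leqs (q+1)^{\ell}$ when $\e = -$ and $\ell$ even, or $q^{\ell} - 1 < (q+1)^{\ell}$ otherwise), keeping the per-coordinate bound $q+1$ intact. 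None of these is deep; the result follows by assembling these elementary counts.
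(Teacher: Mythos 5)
Your proposal is correct and follows essentially the same route as the paper: sum over the cycle types $(r^j,1^{n-jr})$ of the Weyl-group part, count the permutations exactly by the multinomial coefficient, and bound the admissible diagonal parts in each coset $T\s$ by $(q+1)^{n-1-j}$ via Lemma \ref{l:Tsize}. The paper packages your explicit norm-equation count as the statement that the order-$r$ elements of $T\s$ lie in an $F$-stable subtorus of $\bar{T}$ of dimension $j(r-1)+(n-jr-1)=n-1-j$ (the image of the maximal torus of ${\rm SL}_r(k)^j\times{\rm SL}_{n-jr}(k)$), which is exactly the ``one parameter per cycle plus one for the determinant'' bookkeeping you describe, and applying Lemma \ref{l:Tsize} to that subtorus at once also disposes of your worry about cyclic factors of order exceeding $q+1$.
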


\begin{proof}
Recall that $N = T.R$, where $T = G \cap \bar{T}_w^{F}$, $R = C_{W,F}(w) \cong C_W(w)$ and $\bar{T}_w = \bar{T}^g$ for some $g \in \bar{G}$. Set $\Lambda = \{\s \in R \, : \, \s^r = 1\}$ and observe that
\[
i_r(N) \leqs \sum_{\s \in \Lambda} i_r(T\s).
\]
Fix $\s \in \Lambda$ with cycle-shape $(r^j,1^{n-jr})$ as an element of $W = S_n$. We claim that 
\[
i_r(T\s) \leqs (q+1)^{n-1-j}.
\]

To see this, first observe that $i_r(T\s) = |A|$, where $A = \{s \in gTg^{-1} \,:\, |s\s'|=r\}$ and $\s' = g\s g^{-1}$. Now $A$ is contained in $\{s \in \bar{T} \,:\, |s\s'|=r\}$ and by 
considering the action of $\s'$ on $\bar{T}$ we deduce that $A$ is contained in an $F$-stable subtorus $\bar{S}$ of $\bar{T}$ with $\dim \bar{S} = n-1-j$. For example, if 
\[
\s' = (1, \ldots, r)(r+1, \ldots, 2r) \ldots ((j-1)r+1, \ldots, jr)
\]
as an element of $S_n$, then we may identify $\bar{S}$ with the image of the standard maximal torus of ${\rm SL}_r(k)^j \times {\rm SL}_{n-jr}(k) < {\rm SL}_n(k)$, which has dimension $j(r-1)+(n-jr-1) = n-1-j$. Therefore, $|A| \leqs (q+1)^{n-1-j}$ by Lemma \ref{l:Tsize} and this justifies the claim. The result now follows since there are at most 
\[
\frac{n!}{j!(n-jr)!r^j}
\]
elements in $R$ with cycle-shape $(r^j,1^{n-jr})$.
\end{proof}

Recall that if $n \geqs 6$ then Proposition \ref{p:eta} states that $\eta_G(1/3)<1$, where 
$\eta_G(t)$ is the function defined in \eqref{e:etaG}. For certain low dimensional groups over small fields we can use a computational approach to show that $\eta_G(t)<1$ for some smaller constant $t<1/3$. 

\begin{lem}\label{l:lu}
Let $G = {\rm L}_n^{\e}(q)$, where $n \geqs 6$.
\begin{itemize}\addtolength{\itemsep}{0.2\baselineskip}
\item[{\rm (i)}] If $(n,q)$ is one of the following, then $\eta_G(t)<1$ where $t$ is defined as in the table: 
\[
\begin{array}{r|cccccccc}
  & n = 7 & 8 & 9 & 10 & 11 & 12 & 13 & 14 \\ \hline
q=2 & 21/100 & 9/50 & 3/25 & 7/50 & 3/25 & 2/25 & 1/10 & 9/100 \\
3 & 7/50 & 11/100 & & & & &
\end{array}
\]
\item[{\rm (ii)}] If $n = 6$ and $q \leqs 7$, then $\eta_G(t)<1$ where $t$ is defined as follows:
\[
\begin{array}{r|ccccc}
q & 2 & 3 & 4 & 5 & 7 \\ \hline 
t & 1/5 & 4/25 & 17/100 & 13/100 & 3/25  
\end{array}
\]
\end{itemize}
\end{lem}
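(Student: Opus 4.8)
The statement concerns only finitely many groups, so the plan is to verify it with {\sc Magma} for each pair $(n,q)$, and each sign $\e$, appearing in the two tables. The starting observation is that every element of prime order in $G = {\rm L}_n^\e(q)$ is either unipotent, of order $p$, or semisimple, of order a prime $r \neq p$ dividing $|G|$, and in both cases its conjugacy class is governed by well understood combinatorial data. So the first step, for each group, is to assemble a complete set of representatives $x_1, \dots, x_m$ of the classes of prime-order elements together with the class sizes $|x_i^G|$, and then simply to check that $\sum_i |x_i^G|^{-t} < 1$ for the value of $t$ recorded in the relevant table.

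For the smaller groups one can do this directly: construct $G$ in a convenient representation (a permutation representation of small degree, or a matrix group), compute its conjugacy classes, extract the prime-order classes and their sizes, and evaluate the sum. For the larger groups in the tables --- for instance ${\rm L}_{14}^\e(2)$ or ${\rm U}_6(7)$ --- computing all conjugacy classes directly is not feasible, so one proceeds structurally. The unipotent classes correspond to partitions of $n$ with all parts at most $p$, with centraliser orders given by the standard formulae; a semisimple element of prime order $r$ has centraliser in ${\rm GL}_n^\e(q)$ equal to a direct product of general linear or unitary groups of smaller degree --- determined by the multiplicative order of $q$ modulo $r$ --- together with a torus, so its class size can be written down explicitly. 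Passing from ${\rm GL}_n^\e(q)$ or ${\rm SL}_n^\e(q)$ down to $G$ requires only a bounded correction for the central quotient, and the prime-order elements of $G$ with disconnected centraliser (which by Remark \ref{r:disc} occur only when $r \mid (n,q-\e)$) are dealt with separately via the lower bound \eqref{e:lu_disc}. Alternatively, since $\eta_G(t)$ is dominated by the classes of smallest size, one could compute those light classes exactly --- the transvections and other small-$\nu$ elements --- and bound the remaining contribution crudely, using the lower bounds on $|x^G|$ in terms of $\nu(x)$ from \cite[Section 3]{fpr2} together with an easy count of the number of prime-order classes with each value of $\nu(x)$, which makes the tail a geometric sum in $q^{-t\nu(x)}$.

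The main obstacle is that the constants $t$ in the tables are sharp --- well above the uniform value $1/3$ of Proposition \ref{p:eta} --- so no blanket lower bound on class sizes and no loose class count will close the inequality; one genuinely needs the exact, or nearly exact, sizes of the handful of smallest classes (which carry the bulk of $\eta_G(t)$) and an honest count of how many classes of each intermediate size occur. The remainder is bookkeeping: keeping the linear and unitary cases apart, ensuring every prime divisor of $|G|$ contributes its semisimple classes, and confirming that neither the central quotient nor the disconnected-centraliser cases disturb the estimate. Once this data is in hand for each $(n,q,\e)$, the inequality $\eta_G(t) < 1$ follows by direct computation.
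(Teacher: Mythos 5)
Your proposal is correct and follows essentially the same route as the paper: enumerate all conjugacy classes of prime order elements with their sizes (via {\sc Magma}'s \texttt{Classes}/\texttt{ClassicalClasses} applied to ${\rm SL}_n^{\e}(q)$), apply the standard correction for the central quotient when the prime divides $(n,q-\e)$, and evaluate $\sum_i |x_i^G|^{-t}$ directly. Your worry about feasibility for the larger groups is unnecessary in practice, since \texttt{ClassicalClasses} computes the classes from the standard parametrisation rather than by brute force, so the paper needs neither your structural fallback nor the tail-bounding alternative.
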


\begin{proof}
We use {\sc Magma} \cite{Magma}. Set $d = (n,q-\e)$ and let $r$ be a prime divisor of $|G|$. 

First assume $(d,r) = 1$. We take the standard matrix representation of $L = {\rm SL}^{\e}_n(q)$ and we use the functions \texttt{Classes} ($\e=+$) and \texttt{ClassicalClasses} ($\e=-$) to determine the list of conjugacy class sizes of elements of order $r$ in $L$, noting that this coincides precisely with the corresponding list of class sizes in $G$. 

Now suppose $r$ divides $d$ and set $Z = Z(L)$, so $G = L/Z$. As above, we first determine the sizes of the $L$-classes of the form $x^L$, where $x \in L\setminus Z$ and $x^r \in Z$. Fix a class $x^L$ and let $s$ be the multiplicity of $|x^L|$ in this list of class sizes. If $s$ is divisible by $r$, then $G$ has exactly $s/r$ classes of elements of order $r$ with size $|x^L|$, otherwise $G$ has $s$ such classes of size $|x^L|/r$ (see \cite[Propositions 3.2.2, 3.3.3]{BG}, for example). 

In this way, we obtain the complete list $a_1, \ldots, a_m$ of conjugacy class sizes of elements of prime order in $G$, and it is now a routine exercise to verify the bound $\sum_i a_i^{-t}<1$ for the given value of $t$.
\end{proof}

\begin{thm}\label{t:lu}
Suppose $G = {\rm L}_{n}^{\e}(q)$ and $n \geqs 6$. Then $b(G,N) \leqs 3$, with equality if and only if $G = {\rm U}_6(2)$ and $N = 3^4{:}S_6$, in which case $|N \cap N^x| = 2$ for some $x \in G$.
\end{thm}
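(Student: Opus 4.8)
The plan is to apply the fixed point ratio strategy encapsulated in Corollary \ref{c:zeta}, together with the refinement noted in the Remark following it: since $n \geqs 6$, it suffices to verify $\alpha(x) \leqs 1/3$ for all $x \in G$ of prime order (and, where Lemma \ref{l:lu} gives a stronger bound $\eta_G(t)<1$ with $t<1/3$, only $\alpha(x) \leqs (1-t)/2$ is needed). Here $\alpha(x) = \log|x^G \cap N|/\log|x^G|$ as in \eqref{e:alpha}. So the bulk of the argument is a case analysis over the prime $r = |x|$, splitting into $r = p$ (unipotent elements) and $r \ne p$ (semisimple elements), and within each case bounding $|x^G \cap N|$ from above and $|x^G|$ from below.

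For the upper bound on $|x^G \cap N|$, the main tool is Lemma \ref{l:lu1}, which gives $i_r(N) \leqs \sum_{j=0}^{\lfloor n/r\rfloor} \frac{n!}{j!(n-jr)!r^j}(q+1)^{n-1-j}$; since $|x^G \cap N| \leqs i_r(N)$, this is an effective bound, dominated by the $j = \lfloor n/r \rfloor$ (or nearby) term. For the lower bound on $|x^G|$, I would quote the standard estimates in terms of $\nu(x)$ from \cite[Section 3]{fpr2}: roughly $|x^G| \geqs c\, q^{(2 - 2/r)\nu(x) \cdot n / \text{(something)}}$ — more precisely, a bound of the shape $|x^G| \geqs q^{n\nu(x)(1 - 1/r)}/(\text{small factor})$ for semisimple $x$, with the analogous unipotent bound, and the special case recorded in Remark \ref{r:disc} (inequality \eqref{e:lu_disc}) when $C_{\bar G}(x)$ is disconnected. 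The key quantitative point is that $\log|x^G \cap N|$ grows like $\nu(x)\log q$ up to a bounded additive term coming from the $n!/(\cdots)r^j$ factor, whereas $\log|x^G|$ grows like $n\nu(x)(1-1/r)\log q$; taking the ratio, $\alpha(x)$ is of order $1/(n(1-1/r))$, which is $\leqs 1/3$ once $n \geqs 6$ and $q$ is not too small. Transcendental cases — where the combinatorial factor or the $q+1$ versus $q$ discrepancy is non-negligible — occur only for small $q$ (and small $\nu(x)$, i.e. $x$ close to a transvection/long root element), and these are precisely the groups over $\mathbb{F}_2$ and $\mathbb{F}_3$ for which Lemma \ref{l:lu} supplies the sharper $\eta_G(t)<1$.

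I would organise the case division as follows. First dispose of the groups handled computationally: $G = {\rm L}_6^{\e}(2), {\rm L}_6^{\e}(3), {\rm L}_7^{\e}(2), {\rm L}_8^{\e}(2)$ are covered by Lemma \ref{l:small}, and the exceptional conclusion $b(G,N) = 3$ for $({\rm U}_6(2), 3^4{:}S_6)$ is part of that lemma. For the remaining $(n,q)$, treat $r = p$ and $r \ne p$ separately; within the semisimple case, further split off the disconnected-centraliser subcase via Remark \ref{r:disc}(b), where the denominator $2r$ and the $(q/(q+1))^{r-1}$ factor must be tracked but the bound is still more than strong enough since $\nu(x)$ is forced to be at least $n(1-1/r)$-ish when $r \mid (n, q-\e)$. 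The delicate endgame is the small-field groups in Lemma \ref{l:lu}: there I would invoke the recorded constant $t$, reduce to checking $\alpha(x) \leqs (1-t)/2$, and observe that the only elements of prime order that could threaten this inequality are those with very small $\nu(x)$ — for such $x$ one computes $|x^G \cap N|$ and $|x^G|$ explicitly (using the structure $N = T.R$ with $R \cong C_W(w)$ and Lemma \ref{l:Tsize} for $|T|$) and checks the bound directly.

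The main obstacle I anticipate is precisely this last point: for the groups over $\mathbb{F}_2$ and $\mathbb{F}_3$ with $n$ in the range $6$ to $14$, the crude bound $i_r(N)$ from Lemma \ref{l:lu1} need not beat $|x^G|^{(1-t)/2}$ for the smallest-support semisimple classes (e.g. $r = 3$ elements with $\nu(x) = 2$, or involutions of type $t_2$), because $q + 1$ is not much smaller than $q^2$ when $q = 2$. Handling these requires either a sharper count of $|x^G \cap N|$ exploiting that $T\sigma$ contains few elements of order $r$ when $\sigma$ has the relevant cycle shape (tightening the $(q+1)^{n-1-j}$ estimate using the precise torus structure from \cite{BuG}), or a direct {\sc Magma} computation of the relevant fixed point ratios; I would favour the former where the arithmetic is clean and fall back on the latter otherwise. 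Everything else — the generic ratio estimate, the $r = p$ case, and the disconnected subcase — is a routine if somewhat lengthy calculation once the $\nu(x)$-based bounds are in hand.
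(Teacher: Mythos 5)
Your outline matches the paper's strategy at the top level: reduce to verifying $\a(x) \leqs (1-t)/2$ via Proposition \ref{p:eta} and Lemma \ref{l:lu}, dispose of $(n,q) \in \{(6,2),(6,3),(7,2),(8,2)\}$ by Lemma \ref{l:small} (which also supplies the exceptional case ${\rm U}_6(2)$), and split on $r=p$, the disconnected-centraliser case of Remark \ref{r:disc}, and the remaining semisimple classes. But there is a genuine quantitative gap at the centre of your argument. You propose Lemma \ref{l:lu1} as ``the main tool'' for bounding $|x^G \cap N|$ and assert that $\log|x^G\cap N|$ then grows like $\nu(x)\log q$. It does not: $i_r(N)$ is an aggregate count over all classes of order $r$, and its logarithm grows like $(n-1-\lfloor n/r\rfloor)\log(q+1)$ independently of which class $x$ lies in. For a semisimple class with $\nu(x)=1$ and $r \mid q-\e$ one has $|x^G| \approx q^{2(n-1)}$, while $i_r(N)$ can be of order $(q+1)^{n-1}$, and $(q+1)^{3} > q^{2}$ for every $q$; so the crude bound fails for these classes over \emph{every} field, not merely over $\mathbb{F}_2$ and $\mathbb{F}_3$ as your last paragraph suggests.

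The indispensable missing ingredient is a class-by-class bound on $|x^G \cap N|$. One introduces the maximal $h$ such that $x$ is $\bar{G}$-conjugate into a coset $\bar{T}\s$ with $\s$ of cycle-shape $(r^h,1^{n-hr})$; the eigenvalue multiplicities of $x$ then force $x^G \cap N$ to meet only cosets $\bar{T}\s$ with $\s$ of cycle-shape $(r^j,1^{n-jr})$ for $j \leqs h$, and each such coset contributes at most $(q+1)^{j(r-1)}r^{n-jr}$ rather than $(q+1)^{n-1-j}$, giving
\[
|x^G \cap N| \leqs \sum_{j=0}^{h}\frac{n!}{j!(n-jr)!r^j}(q+1)^{j(r-1)}r^{n-jr},
\]
together with the bound $|x^G \cap N| \leqs s\binom{n}{s}$, $s = \nu(x) \in \{1,2\}$, in the extreme case $h=0$ where $x^G \cap N \subseteq T$. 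You do gesture at exactly this refinement (``$T\s$ contains few elements of order $r$ when $\s$ has the relevant cycle shape''), but you present it as an optional patch for small $q$ with {\sc Magma} as a fallback; in fact it is the load-bearing estimate for the entire semisimple analysis, and no machine computation of fixed point ratios is needed or used beyond the groups in Lemma \ref{l:small}. With that bound in place, the rest of your plan (the unipotent case, the disconnected case via \eqref{e:lu_disc}, and the small-field endgame) does go through essentially as you describe, though the $q=2$ groups with $n \leqs 30$ still require a substantial list of explicit subcases for particular pairs $(r,h)$.
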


\begin{proof}
In view of Lemma \ref{l:small}, we may assume
\begin{equation}\label{e:lu}
(n,q) \not\in \{(6,2), (6,3), (7,2), (8,2)\}.
\end{equation}
For the remaining groups, we set $t = 1/3$, with the exception of the specific low dimensional groups over small fields appearing in Lemma \ref{l:lu}, where we define $t<1/3$ as in the lemma. As explained in Section \ref{ss:prob}, if  
\begin{equation}\label{e:alphabd}
\a(x) \leqs \frac{1}{2}(1-t)
\end{equation}
for all $x \in N$ of prime order, where $\a(x)$ is defined as in \eqref{e:alpha}, then  
\[
\mathcal{Q}(G,N,2) \leqs \eta_G(t) < 1
\]
and thus $b(G,N) = 2$. Therefore, our goal is to verify the bound in \eqref{e:alphabd}.

Fix an element $x \in N$ of prime order $r$ and let $\omega \in k$ be a primitive $r$-th root of unity. We now divide the argument into several cases and we freely adopt the notation introduced at the start of Section \ref{s:lu}.

\vs

\noindent \emph{Case 1. $r=p$.}

\vs 

First assume $r=p$, so $x$ is unipotent and $p \leqs n$ (since $p$ must divide $|W|=n!$). Here $x$ is $\bar{G}$-conjugate to an element in $\bar{N}$ of the form $(1, \ldots, 1)\s$, where $\s \in S_n$ has cycle-shape $(p^h,1^{n-hp})$ for some positive integer $h$. Then $x$ has Jordan form $(J_p^h,J_1^{n-hp})$ on $V$ and thus
\[
|x^G| = \frac{|{\rm GL}_{n}^{\e}(q)|}{q^{h(2n-hp-h)}|{\rm GL}_{h}^{\e}(q)||{\rm GL}_{n-hp}^{\e}(q)|} > \frac{1}{2}\left(\frac{q}{q+1}\right)q^{h(p-1)(2n-hp)}.
\]
By arguing as in the proof of Lemma \ref{l:lu1} we deduce that
\begin{equation}\label{e:lunip}
|x^G \cap N| \leqs \frac{n!}{h!(n-hp)!p^h}(q+1)^{h(p-1)}
\end{equation}
and by combining this with the lower bound on $|x^G|$ we get $\a(x) \leqs \b(x)$, where $\b(x)$ is a function of $n$, $h$, $p$ and $f$, where $q=p^f$. So in order to establish the bound in \eqref{e:alphabd}, it suffices to show that $\b(x) \leqs (1-t)/2$. Now if we fix $h$, $p$ and $f$, then $\b(x)$ is a decreasing function of $n$. Therefore, we may assume $n = hp$. Then the corresponding expression for $\b(x)$ is decreasing in both $f$ and $h$, so we may additionally assume that $f=h=1$ and thus $n=p$. Now $\b(x)$ is a decreasing function of $p$ and one checks that $\b(x) \leqs 1/3$ for $p=5$. So for the remainder, we may assume $p \in \{2,3\}$. 

Suppose $p=3$. If $h \geqs 3$ then $\b(x)$ is maximal when $(n,h,f) = (9,3,1)$ and it is easy to check that $\b(x) \leqs 1/3$. Similarly, if $h \in \{1,2\}$ then we reduce to the case $f=1$. Here $n \geqs 7$ (see \eqref{e:lu}) and once again it is straightforward to verify the bound $\b(x) \leqs (1-t)/2$ (note that $\b(x)$ is maximal when $n=7$). 

Finally, suppose $p=2$. If $f \geqs 2$ then one can check that $\b(x) \leqs (1-t)/2$, so we may assume $f=1$ and $n \geqs 9$ (see \eqref{e:lu}). Working with $\b(x)$, and recalling that $t$ is defined in Lemma \ref{l:lu} for $n \leqs 14$, we may assume $(n,h) = (9,1)$, $(9,2)$ or $(10,1)$. In each of these cases, we can establish the desired bound by working with a precise expression for $|x^G|$. For example, if $(n,h) = (9,2)$ then $t = 3/25$ and
\[
|x^G \cap N| \leqs \frac{9!}{2!5!2^2}3^2 = 3402, \;\; |x^G| = \frac{|{\rm GL}_{9}^{\e}(2)|}{2^{24}|{\rm GL}_{2}^{\e}(2)||{\rm GL}_{5}^{\e}(2)|} \geqs 236251890,
\]
which implies that $\a(x) \leqs (1-t)/2$.

\vs

\noindent \emph{Case 2. $r \ne p$, $C_{\bar{G}}(x)$ disconnected.}

\vs 

Here $r$ divides $n$, \eqref{e:lu_disc} holds and one can check that the trivial bound 
\[
|x^G \cap N| \leqs i_r(N) \leqs |N| \leqs (q+1)^{n-1}n!
\]
is sufficient if $n \geqs 22$. For $n \leqs 21$ we can evaluate the upper bound on $i_r(N)$ in Lemma \ref{l:lu1} and this yields $\a(x) \leqs (1-t)/2$ as required.

\vs

\noindent \emph{Case 3. $r \ne p$, $C_{\bar{G}}(x)$ connected, $(r,|T|)=1$.}

\vs 

This is very similar to Case 1, noting that the connectedness of $C_{\bar{G}}(x)$ implies that $x$ lifts to an element of order $r$ in ${\rm SL}_n^{\e}(q)$ (see Remark \ref{r:disc}). Here $r \leqs n$ and $x$ is $\bar{G}$-conjugate to an element in $\bar{N}$ of the form $(1, \ldots, 1)\s$, where $\s \in S_n$ has cycle-shape $(r^h,1^{n-hr})$ for some $1 \leqs h <n/r$ (note that $h \ne n/r$ since we are assuming $C_{\bar{G}}(x)$ is connected). Therefore, $x$ is $\bar{G}$-conjugate to 
$(I_{n-h(r-1)}, \omega I_{h}, \ldots, \omega^{r-1}I_{h})$, modulo scalars, so 
\begin{equation}\label{e:lu11}
|x^G|>\frac{1}{2}\left(\frac{q}{q+1}\right)^{r-1}q^{h(r-1)(2n-hr)}
\end{equation}
and by arguing as in Case 1 we see that 
\[
|x^G \cap N| \leqs \frac{n!}{h!(n-hr)!r^h}(q+1)^{h(r-1)}.
\]
One can now check that these bounds yield $\a(x) \leqs (1-t)/2$. 

\vs

\noindent \emph{Case 4. $r \ne p$, $C_{\bar{G}}(x)$ connected, $r$ divides $|T|$.}

\vs 

To complete the proof, we may assume $r \ne p$ and $r$ divides $|T|$. Let $0 \leqs h < n/r$ be maximal such that $x$ is $\bar{G}$-conjugate to an element in a coset $\bar{T}\s$, where $\s \in S_n$ has cycle-shape $(r^{h},1^{n-hr})$. 

First assume $h=0$, in which case $x^G \cap N \subseteq T$ and thus  
$|x^G \cap N| \leqs (q+1)^{n-1}$. 
Set $s = \nu(x)$ (see Definition \ref{d:nu}). If $s \geqs 3$ then \cite[Corollary 3.38]{fpr2} gives
\[
|x^G| > \frac{1}{2}\left(\frac{q}{q+1}\right)q^{6n-18}
\]
and we deduce that $\a(x) \leqs (1-t)/2$ as required. Similarly, if $s \in \{1,2\}$ then the bounds
\[
|x^G \cap N| \leqs s\binom{n}{s},\;\; |x^G| > \frac{1}{2}\left(\frac{q}{q+1}\right)q^{2s(n-s)}
\]
are sufficient.

For the remainder, let us assume $h \geqs 1$. Then $r \leqs n$ and each $r$-th root of unity has multiplicity at least $h$ as an eigenvalue of $x$ on $\bar{V}$ (and by the maximality of $h$, the multiplicity of at least one eigenvalue is exactly $h$). One can check that $|x^G|$ is minimal when $x$ is $\bar{G}$-conjugate to $(I_{n-h(r-1)},\omega I_{h}, \ldots, \omega^{r-1}I_{h})$ and thus \eqref{e:lu11} holds. Next observe that there exists an integer $j$ in the range $0 \leqs j \leqs h$ such that $x$ is $\bar{G}$-conjugate to an element in $\bar{N}$ of the form $(z_1, \ldots, z_n)\sigma$, where 
\[
\sigma = (1, \ldots, r) (r+1, \ldots, 2r) \cdots ((j-1)r+1, \ldots, jr) \in S_n
\]
has cycle-shape $(r^j,1^{n-jr})$ and $z_i$ is nontrivial (of order $r$) only if $i>jr$. 
Since there are at most $r$ possibilities for each $z_i$ with $i>jr$, it follows that   
\begin{equation}\label{e:acc}
|x^G \cap N| \leqs \sum_{j=0}^{h}\frac{n!}{j!(n-jr)!r^j}(q+1)^{j(r-1)}r^{n-jr},
\end{equation}
which in turn implies that 
\begin{equation}\label{e:nacc}
|x^G \cap N| \leqs (q+1)^{h(r-1)}r^n \sum_{j=0}^{h} \left(\frac{n^r}{r}\right)^j \leqs 2\left(\frac{n^r}{r}\right)^h(q+1)^{h(r-1)}r^n.
\end{equation}

For now, let us assume $r \geqs 3$ and $q \geqs 3$. By combining the lower bound on $|x^G|$ in \eqref{e:lu11} with the upper bound on $|x^G \cap N|$ in \eqref{e:nacc}, we deduce that $\a(x) \leqs 1/3$ if $n \geqs 22$ or $q \geqs 31$, which means that we may assume $n \leqs 21$ and $q \leqs 29$. Then by evaluating the upper bound in \eqref{e:acc}, we may further assume that $n \leqs 12$ and $q \leqs 5$, with $r \in \{3,5\}$ and $h = 1$.

Suppose $(r,h) = (3,1)$. If $x$ is $\bar{G}$-conjugate to $(I_{n-2},\omega,\omega^2)$ then the bounds
\[
|x^G \cap N| \leqs 2\binom{n}{2}+\frac{n!}{(n-3)!3}(q+1)^2,\;\; |x^G|>\frac{1}{2}\left(\frac{q}{q+1}\right)^2q^{4n-6}
\]
are sufficient. Otherwise, 
\[
|x^G| >\frac{1}{2}\left(\frac{q}{q+1}\right)^2q^{6n-14}
\]
(minimal if $x$ is of the form $(I_{n-3}, \omega I_2, \omega^2)$) and one can check that the upper bound on $|x^G \cap N|$ in \eqref{e:acc} gives $\a(x) \leqs (1-t)/2$. A very similar argument applies if $(r,h) = (5,1)$. Indeed, if $x$ is conjugate to $(I_{n-4},\omega, \omega^2, \omega^3, \omega^4)$ then the bounds
\[
|x^G \cap N| \leqs 4!\binom{n}{4}+\frac{n!}{(n-5)!5}(q+1)^4,\;\; |x^G| > \frac{1}{2}\left(\frac{q}{q+1}\right)^4q^{8n-20}
\]
are good enough. And if $x$ is not of this form, then
\[
|x^G|>\frac{1}{2}\left(\frac{q}{q+1}\right)^4q^{10n-32}
\]
and we obtain $\a(x) \leqs (1-t)/2$ via the upper bound on $|x^G \cap N|$ in \eqref{e:acc}.

To complete the proof, we may assume $r=2$ or $q=2$. We will first deal with the case $r=2$, so $q \geqs 3$ is odd and $x = (-I_h,I_{n-h})$ (modulo scalars) with $\nu(x) = h<n/2$ (recall that we are assuming $C_{\bar{G}}(x)$ is connected). Here  
\[
|x^G|>\frac{1}{2}\left(\frac{q}{q+1}\right)q^{2h(n-h)}
\] 
and by arguing as in the proof of Lemma \ref{l:lu1} we see that
\begin{equation}\label{e:lu2}
|x^G \cap N| \leqs \sum_{j=0}^h\frac{n!}{j!(n-2j)!2^j}(q+1)^j\binom{n-2j}{h-j}.
\end{equation}
In particular, if $h=1$ then 
\[
|x^G \cap N| \leqs n+\frac{n!}{(n-2)!2}(q+1),\;\; |x^G|>\frac{1}{2}\left(\frac{q}{q+1}\right)q^{2n-2}
\]
and we deduce that $\a(x) \leqs (1-t)/2$. Now assume $h \geqs 2$. If $n=6$ then $h=2$ and the above bounds are sufficient. For $n \geqs 7$, one can check that the upper bound on $|x^G \cap N|$ given in \eqref{e:nacc} yields $\a(x) \leqs 1/3$ if $n \geqs 24$ or $q \geqs 23$, so we may assume $n \leqs 23$ and $q \leqs 19$. At this point, we can now switch to the upper bound on $|x^G \cap N|$ in \eqref{e:lu2} and we obtain the desired bound $\a(x) \leqs (1-t)/2$.

Finally, let us assume $q=2$, so $r \geqs 3$ and $n \geqs 9$. There are several cases that require special attention. 

First assume $(r,h) = (3,1)$ and set $s = \nu(x)$. Since $h=1$, it follows that either $\omega$ or $\omega^2$ has multiplicity $1$ as an eigenvalue of $x$ on $\bar{V}$ and therefore
\[
|x^G \cap N| \leqs n \cdot 2^{n-1} + \frac{n!}{(n-3)!3}3^2 \cdot 2^{n-3}.
\]
If $s \geqs 4$ then $|x^G|>\frac{2}{9}2^{8n-26}$ and we deduce that $\a(x) \leqs (1-t)/2$. Similarly, if $s=3$ then we may assume $x = (I_{n-3},\omega I_2, \omega^2)$, so $|x^G|>\frac{2}{9}2^{6n-14}$ and the bound
\[
|x^G\cap N| \leqs n\binom{n-1}{2}+\frac{n!}{(n-3)!3}3^2(n-3)
\]
is sufficient. Finally, if $s=2$ then $x = (I_{n-2},\omega, \omega^2)$, $|x^G|>\frac{2}{9}2^{4n-6}$ and the result follows since
\[
|x^G \cap N| \leqs 2\binom{n}{2}+\frac{n!}{(n-3)!3}3^2.
\]

Next suppose $(r,h) = (5,1)$. Here we observe that $r$ is a primitive prime divisor of $q^4-1$, so the condition $h=1$ implies that $x$ is of the form $(I_{n-4},\omega, \omega^2, \omega^3,\omega^4)$. Therefore,
\[
|x^G \cap N| \leqs 4!\binom{n}{4}+\frac{n!}{(n-5)!5}3^4,\;\; |x^G|>\frac{1}{3}2^{8n-20}
\]
and we deduce that $\a(x) \leqs (1-t)/2$.

For the remainder, we may assume $(r,h) \ne (3,1), (5,1)$. Now
\begin{equation}\label{e:lu3}
|x^G| > \frac{1}{2}\left(\frac{2}{3}\right)^{r-1}2^{h(r-1)(2n-hr)}
\end{equation}
and one can check that the upper bound in \eqref{e:nacc} gives $\a(x) \leqs 1/3$ if $n \geqs 54$. Similarly, if $n \leqs 53$ then the bound in \eqref{e:acc} is sufficient if $n \geqs 31$. Therefore, to complete the proof we may assume $n \leqs 30$.

Suppose $(r,h) = (7,1)$ and note that $r$ is a primitive prime divisor of $q^3-1$. If $x$ is of the form $(I_{n-6}, \omega, \ldots, \omega^6)$ then the bounds
\[
|x^G\cap N| \leqs 6!\binom{n}{6}+\frac{n!}{(n-7)!7}3^6,\;\; |x^G|>\frac{1}{3}2^{12n-42}
\]
are sufficient. On the other hand, if $x$ is not of this form, then $n \geqs 10$, $|x^G|>\frac{1}{3}2^{18n-96}$ and the bound in \eqref{e:acc} yields $\a(x) \leqs (1-t)/2$. The case $(r,h) = (11,1)$ is handled in an entirely similar fashion. 

Next assume $r=3$ and $h \in \{2,3\}$, in which case there are two possibilities for $x$ (up to scalars). If $x = (I_{n-2h},\omega I_h, \omega^2I_h)$ then 
\[
|x^G \cap N| \leqs \sum_{j=0}^h \frac{n!}{j!(n-3j)!3^j}\frac{(n-3j)!}{(n-2h-j)!(h-j)!^2}3^{2j}
\]
and we deduce that $\a(x) \leqs (1-t)/2$ since $|x^G|>\frac{2}{9}2^{2h(2n-3h)}$. Otherwise, $x$ is of the form $(I_{n-2h-1},\omega I_{h+1},\omega^2 I_{h})$ and the bounds
\[
|x^G \cap N| \leqs \sum_{j=0}^h \frac{n!}{j!(n-3j)!3^j}\frac{(n-3j)!}{(n-2h-j-1)!(h+1-j)!(h-j)!}3^{2j}
\]
and 
\[
|x^G| > \frac{2}{9}2^{4nh+2n-6h^2-6h-2}
\]
are sufficient.

Finally, suppose $q=2$, $n \leqs 30$ and $hr \not\in \{3,5,6,7,9,11\}$. In these cases  one can check that the bounds in \eqref{e:acc} and \eqref{e:lu3} are good enough. 
\end{proof}

To complete the proof of Theorem \ref{t:main} for linear and unitary groups, it remains to consider the following low-dimensional groups
\[
{\rm L}_2(q), \, {\rm L}_3^{\e}(q), \, {\rm L}_4^{\e}(q), \, {\rm L}_5^{\e}(q).
\]

\begin{prop}\label{l:psl2}
If $G = {\rm L}_{2}(q)$ then $b(G,N) \leqs 3$, with equality if and only if $q \geqs 4$ is even and $N = D_{2(q+1)}$, in which case $|N \cap N^x| = 2$ for all $x \in G \setminus N$.
\end{prop}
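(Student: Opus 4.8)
The plan is to analyse the structure of $G = {\rm L}_2(q)$ and its maximal tori normalisers explicitly, since the rank is too small for the zeta-function machinery to apply. Recall that $\bar{G} = {\rm PSL}_2(k)$, the Weyl group is $W = S_2$, and there are exactly two $F$-classes, corresponding to the split and non-split maximal tori. Writing $d = (2,q-1)$, the split torus gives $T$ cyclic of order $(q-1)/d$ and the non-split torus gives $T$ cyclic of order $(q+1)/d$; in each case $R = C_{W,F}(w) \cong C_2$, so $N = T.2$ is dihedral of order $2(q-1)/d$ or $2(q+1)/d$ respectively (with the usual small-$q$ degeneracies when $|T| \leqslant 2$). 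I would first dispose of the small cases $q \leqslant 9$ (say) by direct computation in {\sc Magma}, or by hand using the known subgroup structure of ${\rm L}_2(q)$, so that for the rest we may assume $q$ is reasonably large.

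For the generic argument I would split into $q$ odd and $q$ even. When $q$ is odd, $N$ is a dihedral group of order $q \pm 1$ (after dividing by $d=2$), and these are among the cases covered by \cite{B20}: indeed $N$ is a soluble maximal subgroup of $G$ when $q \geqslant 13$, so $b(G,N) = 2$ follows from the base size computations there, and the stated asymptotic $\mathcal{P}(G,N,2) \to 1/2$ is consistent with this. Alternatively, one can argue directly: for $x \in N$ of prime order $r$, estimate $\fpr(x,G/N) = |x^G \cap N|/|x^G|$ using that $|x^G \cap N| \leqslant |N|$ is bounded by roughly $q$ while $|x^G|$ grows like $q^2$ for a nontrivial semisimple or unipotent element (the involution and the unipotent classes being the only prime-order classes meeting $N$ in more than a bounded set), so $\what{\mathcal{Q}}(G,N,2) < 1$ for $q$ large; combine with Proposition \ref{p:base}.

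The genuinely interesting case is $q$ even, where $d = 1$, so $N = D_{2(q-1)}$ (split) or $N = D_{2(q+1)}$ (non-split), and here I expect $b(G,N) = 3$ precisely for the non-split torus. The key obstacle is proving the lower bound $b(G,N) \geqslant 3$ in that case, i.e.\ that $N \cap N^x \neq 1$ for all $x \in G$. The mechanism: $N = D_{2(q+1)}$ contains a unique involution? No --- in characteristic $2$ the dihedral group $D_{2(q+1)}$ with $q+1$ odd has exactly $q+1$ involutions, all conjugate in $N$, and these are unipotent (root) elements of $G$. The point is to show that any two conjugates of $N$ share a common involution. One clean way: count. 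The number of conjugates of $N$ is $|G:N| = q(q-1)/2$, and each contains $q+1$ involutions; meanwhile $G$ has exactly $q^2-1$ involutions, each lying in some number of conjugates of $N$. A double-counting / Frobenius-type argument, or explicit analysis of how the Sylow $2$-subgroup (which has order $q$ and is elementary abelian, self-centralising) meets the tori normalisers, should show that two suitably chosen conjugates must overlap --- in fact one shows $|N \cap N^x| = 2$ for all $x \in G \setminus N$, the intersection being generated by a single common involution. For the upper bound $b(G,N) \leqslant 3$ one then just exhibits a third conjugate $N^y$ avoiding that common involution, which is easy by a counting or random-search argument since the involutions are spread out. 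The split case $N = D_{2(q-1)}$ with $q$ even is handled the same way but now the generic $2$-point stabiliser is trivial, giving $b(G,N) = 2$; here the relevant feature is that $N$'s involutions are the reflections inverting a non-split-type... one checks directly (or cites \cite{B20}) that $N \cap N^x = 1$ for a suitable $x$. Assembling these cases gives the stated dichotomy, with the exceptional family $({\rm L}_2(q), D_{2(q+1)})$, $q \geqslant 4$ even, and $|N \cap N^x| = 2$ for all $x \in G \setminus N$ as claimed.
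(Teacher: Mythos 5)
Your overall skeleton matches the paper's: small $q$ by computation, and for larger $q$ an appeal to \cite{B20}. But the paper's proof is shorter than you anticipate, because \cite[Lemmas 4.7, 4.8]{B20} already cover \emph{both} parities for $q \geqs 13$ and deliver the full dichotomy ($b(G,N) \leqs 3$, with equality iff $q$ is even and $N = D_{2(q+1)}$); the only thing the paper adds is the refinement $|N \cap N^x| = 2$ for all $x \in G \setminus N$, which it reads off from the subdegrees: by \cite[Table 2]{FI} every nontrivial subdegree of $G$ on $G/N$ equals $q+1$, so every nontrivial orbit of $N$ has length $q+1$ and $|N \cap N^x| = |N|/(q+1) = 2$. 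You instead attempt a self-contained counting argument for the even non-split case, which is a genuinely different (and more elementary) route, but as written it has a gap: you observe that each of the $q^2-1$ involutions of $G$ lies in $q/2$ conjugates of $N$ and assert that a ``double-counting / Frobenius-type argument \ldots should show'' that any two conjugates overlap. The incidence count alone is only \emph{consistent} with that conclusion; it does not force it. To close the argument you must first prove $|N \cap N^x| \leqs 2$ whenever $x \notin N$. This follows because a nontrivial odd-order element $t$ of the non-split torus $T = C_{q+1}$ satisfies $C_G(t) = T$ and $N_G(\la t \ra) = N$, so $T \cap T^x \ne 1$ forces $N = N^x$; hence for $x \notin N$ the intersection $N \cap N^x$ is a $2$-group of order at most $2$. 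Granted that, the identity $(q^2-1)\binom{q/2}{2} = \binom{q(q-1)/2}{2}$ shows that \emph{every} unordered pair of distinct conjugates shares exactly one involution, giving both $b(G,N) \geqs 3$ and $|N \cap N^x| = 2$ for all $x \in G \setminus N$ at once. With that step supplied your argument is correct and arguably more transparent than the citation of \cite{FI}; without it, the key lower bound is not proved. (Your treatment of the split even case and of odd $q$ is fine, modulo the fact that you could simply cite \cite{B20} for those as the paper does.)
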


\begin{proof}
Set $d = (2,q-1)$ and note that $N = D_{2(q-\e)/d}$ with $\e = \pm 1$. The groups with $q<13$ can be checked directly using {\sc Magma} \cite{Magma} and so we may assume $q \geqs 13$. Here $N$ is a soluble maximal subgroup of $G$ and \cite[Lemmas 4.7, 4.8]{B20} imply that $b(G,N) \leqs 3$, with equality if and only if $q$ is even and $N = D_{2(q+1)}$. In the latter case, every nontrivial subdegree for the action of $G$ on $G/N$ is $q+1$ (see \cite[Table 2]{FI}, for example), so $|N \cap N^x| = 2$ for all $x \in G \setminus N$ and the result follows.
\end{proof}

\begin{prop}\label{l:psl3}
If $G = {\rm L}_{3}^{\e}(q)$ then $b(G,N) \leqs 3$, with equality if and only if one of the following holds: 
\begin{itemize}\addtolength{\itemsep}{0.2\baselineskip}
\item[{\rm (i)}] $(G,N) = ({\rm L}_3(2), 7{:}3)$, in which case $|N \cap N^x| = 3$ for all $x \in G \setminus N$. 
\item[{\rm (ii)}] $(G,N) = ({\rm U}_3(3), 4^2{:}S_3)$, in which case $|N \cap N^x| = 3$ for some $x \in G$. 
\end{itemize}
\end{prop}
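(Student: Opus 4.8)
The plan is to combine the fixed point ratio method of Section~\ref{ss:prob} with a direct computation in {\sc Magma} for small $q$, working according to the three possibilities for $N$. The Weyl group $W = S_3$ has three conjugacy classes, corresponding to the partitions $(1^3)$, $(2,1)$ and $(3)$ of $n=3$, and these yield: the split maximal torus normaliser $N = T.S_3$ with $|T| = (q-\e)^2/d$; the subgroup $N = T.C_2$ with $|T| = (q^2-1)/d$; and the normaliser $N = T.C_3$ of a Coxeter (Singer) torus with $|T| = (q^2+\e q+1)/d$, where $d = (3,q-\e)$. In every case $|N| \leqs 6(q+1)^2$ while $|G:N|$ has order of magnitude $q^6$, so the trivial bound $b(G,N) \geqs \log_{|G:N|}|G|$ yields only $b(G,N) \geqs 2$; the content of the proposition is an upper bound, together with the identification of the two cases with $b(G,N) = 3$.

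First I would verify the proposition with {\sc Magma} for all $q$ below an explicit bound, large enough that the estimates below cover the remaining cases. For each such $G$ and each $N = T.R$ (constructed as a permutation group), random search produces $x \in G$ with $N \cap N^x = 1$, except for $({\rm L}_3(2),7{:}3)$ and $({\rm U}_3(3),4^2{:}S_3)$, where instead one finds $x,y \in G$ with $N \cap N^x \cap N^y = 1$; since $\log_{|G:N|}|G| > 2$ in both of these cases this gives $b(G,N) = 3$. For ${\rm L}_3(2)$ the claim that $|N \cap N^x| = 3$ for all $x \in G \setminus N$ follows because $G \cong {\rm L}_2(7)$ acts $2$-transitively on the $8$ cosets of $N = 7{:}3$, so the two-point stabiliser has order $21/7 = 3$; the corresponding assertion for ${\rm U}_3(3)$ is read off directly. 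Hence I may assume $q$ is large.

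For the generic argument I would bound $\widehat{\mathcal{Q}}(G,N,2) = \sum_i |x_i^G| \cdot \fpr(x_i,G/N)^2$ directly, noting that Proposition~\ref{p:eta} is unavailable since $n = 3 < 6$. The crude bound $\sum_i |x_i^G \cap N| \leqs |N| = O(q^2)$ is not enough on its own, because the smallest classes of prime-order elements have $|x^G|$ only of order $q^4$; so I would split the prime-order classes according to the parameter $\nu(x)$ of Definition~\ref{d:nu}. For $x$ of prime order in ${\rm L}_3^{\e}(q)$ one has $\nu(x) \in \{1,2\}$, with $\nu(x) = 2$ exactly when $x$ is regular, and then $|x^G| \geqs c\,q^6$ for an absolute constant $c > 0$, so by Lemma~\ref{l:calc} the regular classes contribute at most $|N|^2/(cq^6) = O(q^{-2})$. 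For $\nu(x) = 1$ one has $|x^G| \geqs c\,q^4$, and the key point is that $N = T.R$ contains only $O(q)$ elements $y$ with $\nu(y) = 1$: a noncentral element of the $p'$-torus $T$ has a repeated eigenvalue for only $O(q)$ values (and for no values when $T$ is a Coxeter torus, since then every noncentral element is regular), while in a coset $T\s$ with $1 \ne \s \in R$ the non-regular elements again form an $O(q)$-subset; in particular $N$ contains at most $O(q)$ transvections, and these occur only when $p = 2$ (for $p = 3$ the $p$-elements of $N$ all lie in cosets of $3$-cycles and are regular unipotent). Hence the $\nu(x) = 1$ classes contribute at most $(O(q))^2/(cq^4) = O(q^{-2})$, so $\widehat{\mathcal{Q}}(G,N,2) \to 0$ and $b(G,N) = 2$ for $q$ large by Proposition~\ref{p:base}. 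The unitary case is entirely analogous.

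I expect the main obstacle to be the intermediate range of $q$ — too large for the direct {\sc Magma} computation, but where the crude asymptotic constants above do not yet force $\widehat{\mathcal{Q}}(G,N,2) < 1$. Here I would substitute precise formulas for $|x^G|$ from \cite[Chapter~3]{BG} together with sharper counts of $|x^G \cap N|$; the tightest case is the split torus normaliser, where the $\nu(x) = 1$ contribution (transvections and semisimple elements with a repeated eigenvalue) is the bottleneck, so this is a matter of bookkeeping with the constants rather than a new idea. For the Coxeter torus normaliser, which is a soluble maximal subgroup of $G$ for all but finitely many $q$, one may alternatively invoke the base-size computations of \cite{B20}.
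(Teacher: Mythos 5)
Your proposal is correct in outline but takes a genuinely different route from the paper for the large-$q$ analysis. The paper observes that for $q \geqs 7$ two of the three cases — the split torus normaliser $(C_{q-\e})^2{:}S_3$ and the Singer/Coxeter normaliser $C_{q^2+\e q+1}{:}3$ — are \emph{soluble maximal} subgroups of $G$, so $b(G,N)=2$ follows immediately from \cite[Lemmas 6.4, 6.5]{B20}; the probabilistic estimate is then carried out only for the remaining non-maximal case $N = C_{q^2-1}{:}2$, where the key counts are the $q+\e$ involutions in the coset $\la\l\ra\phi$ and a $\log(q+1)$ factor for the number of odd primes dividing $q-\e$. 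You instead run the $\nu(x)$-dichotomy (regular versus $\nu=1$) through all three cases, which is more self-contained but, as you anticipate, is genuinely tight in the split-torus case for small $q$: with the crude constants you state ($|N|\leqs 6(q+1)^2$, $O(q)$ elements with $\nu=1$, $|x^G|\geqs cq^4$), the bound $\widehat{\mathcal{Q}}(G,N,2)<1$ does not hold at $q=7$ without replacing the $O(\cdot)$'s by exact counts of involutions and non-regular semisimple elements in $T$ and in the cosets $T\s$, together with the precise class size $q^2(q^2+\e q+1)$ for $\nu(x)=1$; so that bookkeeping step is essential rather than optional. Your derivation of $|N\cap N^x|=3$ for all $x\in G\setminus N$ when $(G,N)=({\rm L}_3(2),7{:}3)$, via the $2$-transitive degree-$8$ action of ${\rm L}_2(7)$, is cleaner than the paper's direct computation. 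One small discrepancy of bookkeeping: the computational cutoff in the paper is $q\leqs 5$, and the lower bound $b\geqs 3$ in the two exceptional cases is obtained exactly as you describe (from $\log_{|G:N|}|G|>2$ for ${\rm U}_3(3)$, and from the subdegree computation for ${\rm L}_3(2)$).
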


\begin{proof}
Here $W = S_3$ has $3$ conjugacy classes and we observe that the maximal tori in ${\rm SL}_3^{\e}(q)$ are as follows, up to conjugacy:
\begin{equation}\label{e:list1}
C_{q^2+\e q+1},\; C_{q^2-1},\; (C_{q-\e})^2
\end{equation}
and the corresponding possibilities for $N = T.R$ are $C_{q^2+\e q +1}{:}3$, $C_{q^2-1}{:}2$ and $(C_{q-\e})^2{:}S_3$, respectively (modulo scalars). The groups with $q \leqs 5$ can be handled using {\sc Magma} (see the proof of Lemma \ref{l:small}, for example), so we will assume $q \geqs 7$.

In the first and third cases in \eqref{e:list1} we observe that $N$ is a soluble maximal subgroup of $G$ and thus $b(G,N) = 2$ by \cite[Lemmas 6.4, 6.5]{B20}. For the remainder we may assume that $N = \what{N}/Z$, where $\what{N} = C_{q^2-1}{:}2$ and $Z = Z({\rm SL}_3^{\e}(q))$. We may identify $\what{N}$ with the normaliser in ${\rm GL}_{2}^{\e}(q)$ of an element of order $q^2-1$. Set $d = |Z| = (3,q-\e)$. By Proposition \ref{p:base}, it suffices to show that $\what{\mathcal{Q}}(G,N,2)<1$.

Let $x \in N$ be an element of prime order $r$. First assume $r=2$ and note that $G$ has a unique conjugacy class of involutions, so $|x^G \cap N| = i_2(\what{N})$. Write $\mathbb{F}_{q^2}^{\times} = \la \l \ra$ and identify $\what{N}$ with $\la \l \ra {:} \la \phi \ra$, where $\phi: \l \mapsto \l^{\e q}$. It is straightforward to show that the coset $\la \l \ra\phi$ contains precisely $q+\e$ involutions, whence $|x^G \cap N| \leqs q+2 = a_1$ and we note that $|x^G| \geqs (q^3+1)(q-1) = b_1$ (minimal if $p=2$ and $\e = -$). 

Now assume $r$ is odd, so $r$ divides $q^2-1$. If $x$ is regular (that is, if $C_{\bar{G}}(x)^0$ is a maximal torus), then 
\[
|x^G| \geqs \frac{|{\rm GU}_3(q)|}{3|{\rm GU}_{1}(q)|^3} = \frac{1}{3}q^3(q-1)(q^2-q+1) = b_2
\]
and we note that $|N| \leqs 2(q^2-1) = a_2$. On the other hand, if $x$ is non-regular then $r$ divides $q-\e$, $|x^G \cap N| = 1$ and
\[
|x^G|  = \frac{|{\rm GL}_3^{\e}(q)|}{|{\rm GL}_{2}^{\e}(q)||{\rm GL}_{1}^{\e}(q)|} \geqs q^2(q^2-q+1) = b_3.
\]
In view of Lemma \ref{l:calc}, it follows that the combined contribution to $\what{\mathcal{Q}}(G,N,2)$ from non-regular semisimple elements of odd order is at most 
\[
\sum_{r \in \pi} (r-1)/b_3 < q\log (q+1)/b_3 = c,
\]
where $\pi$ is the set of odd prime divisors of $q-\e$ (here we are using the fact that $r \leqs q+1$ and $|\pi| \leqs \log(q+1)$).

By bringing together the above estimates, using Lemma \ref{l:calc} once again, we obtain
\[
\what{\mathcal{Q}}(G,N,2) < a_1^2/b_1 + a_2^2/b_2 + c < 1
\]
for all $q \geqs 7$, so $b(G,N) = 2$ and the result follows.
\end{proof}

\begin{prop}\label{l:psl4}
If $G = {\rm L}_{4}^{\e}(q)$ then either $b(G,N)=2$, or $G = {\rm U}_4(2)$ and $N = 3^3{:}S_4$, with $b(G,N) = 4$ and $|N \cap N^x| \in \{24,54\}$ for all $x \in G \setminus N$. 
\end{prop}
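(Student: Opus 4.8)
The plan is to follow the template established above for ${\rm L}_3^{\e}(q)$ in Proposition \ref{l:psl3}, and more generally for the higher-dimensional case in Theorem \ref{t:lu}: treat the groups over small fields computationally, and for $q$ large verify $\widehat{\mathcal{Q}}(G,N,2)<1$, so that $b(G,N)=2$ by Proposition \ref{p:base}.

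First I would record the five possibilities for $N$. Since $W = S_4$ has five conjugacy classes, indexed by the partitions of $4$, we obtain $R \in \{S_4, C_2^2, D_8, C_3, C_4\}$, with $T$ the image in $G$ of $\widehat{T} \cap {\rm SL}_4^{\e}(q)$, where $\widehat{T}$ runs over $(C_{q-\e})^4$, $C_{q^2-1}\times(C_{q-\e})^2$, $(C_{q^2-1})^2$, $C_{q^3-\e}\times C_{q-\e}$ and $C_{q^4-1}$ respectively. The groups with $q$ small --- including ${\rm L}_4(2) \cong A_8$, ${\rm U}_4(2) \cong {\rm PSp}_4(3)$, ${\rm L}_4(3)$ and ${\rm U}_4(3)$ --- would be handled directly in {\sc Magma}, exactly as in the proof of Lemma \ref{l:small}, checking $b(G,N) = 2$ by random search (passing to an overgroup of $N$ where convenient). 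The one genuine exception is $(G,N) = ({\rm U}_4(2), 3^3{:}S_4)$, the split maximal torus normaliser: here I would construct $N = N_G(K)$ for $K$ an elementary abelian normal subgroup of order $3^3$ in a Sylow $3$-subgroup of $G$, enumerate the $(N,N)$-double cosets to confirm that $N \cap N^x \ne 1$ for all $x$ (and that $|N \cap N^x| \in \{24,54\}$ for $x \in G\setminus N$), exhibit $x,y$ with $|N \cap N^x \cap N^y| = 2$ and $x,y,z$ with $N \cap N^x \cap N^y \cap N^z = 1$, and finally verify computationally that no triple of conjugates of $N$ meets trivially; together these give $b(G,N) = 4$.

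For $q$ sufficiently large I would work through the five torus types. The split torus normaliser $(C_{q-\e})^3.S_4$ and the Coxeter torus normaliser are soluble maximal subgroups of $G$, so one may quote \cite{B07, B20} there; for the remaining three types I would argue directly. Following the case division in the proof of Theorem \ref{t:lu}, fix $x \in N$ of prime order $r$: if $r = p$ then $p \in \{2,3\}$ and the Jordan form of $x$ is read off from a cycle-shape $(p^h, 1^{4-hp})$ in $S_4$; if $r \ne p$ and $C_{\bar{G}}(x)$ is disconnected then $r = 2$ divides $(4,q-\e)$ and \eqref{e:lu_disc} applies; and if $r \ne p$ with $C_{\bar{G}}(x)$ connected, subdivide according to whether $r \mid |T|$. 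In each case, bound $|x^G \cap N|$ by the method of Lemma \ref{l:lu1} (for semisimple $x$, refining this in terms of $\nu(x)$ and the eigenvalue multiplicities of $x$ on $\bar{V}$), bound $|x^G|$ below via the estimates in \cite[Section 3]{fpr2}, and sum the terms $|x_i^G| \cdot \fpr(x_i, G/N)^2$. This yields $\widehat{\mathcal{Q}}(G,N,2) < 1$ once $q$ exceeds an explicit bound, and the finitely many remaining small values of $q$ are cleared by {\sc Magma}.

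The principal obstacle is precisely the mechanism behind the ${\rm U}_4(2)$ exception: for $N$ the split torus normaliser (where $|N| \leqs 24(q+1)^3$) the semisimple elements $x \in T$ with $\nu(x) \in \{1,2\}$ have small conjugacy classes, $|x^G|$ of order $q^6$ or $q^4$, whereas $|x^G \cap N|$ can be as large as $O(q^3)$ or $O(q^2)$ respectively; controlling the total contribution of these terms --- hence pinning down the threshold above which $\widehat{\mathcal{Q}}(G,N,2)<1$, and separately treating $q = 2$ in the unitary case and the other small fields by computer --- is the delicate part of the argument. This hands-on analysis is unavoidable here, since Proposition \ref{p:eta}, which makes the argument uniform when $n \geqs 6$, is not available for $n = 4$.
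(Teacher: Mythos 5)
Your strategy is essentially the paper's: {\sc Magma} for small $q$ (where the ${\rm U}_4(2)$ exception is detected and $b(G,N)=4$ certified), the solubility results of \cite{B20} for the split torus normaliser, and a direct verification of $\what{\mathcal{Q}}(G,N,2)<1$ for the remaining tori once $q\geqs 7$, organised by the same case division on prime order elements. One slip worth correcting: the Coxeter torus normaliser is \emph{not} in general a maximal subgroup of ${\rm L}_4^{\e}(q)$ --- for $\e=+$, since $4$ is composite, the Singer cycle normaliser $C_{(q^4-1)/d(q-1)}{:}4$ lies inside a $\mathcal{C}_3$-subgroup of type ${\rm GL}_2(q^2)$ --- so you cannot quote \cite{B07,B20} for that case. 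The paper instead folds the Coxeter torus into the probabilistic analysis together with the other non-split tori, using the uniform bound $|N| \leqs 8(q+1)^3/d$ with $d=(4,q-\e)$. This costs nothing, since your ``argue directly'' fallback covers it, and your worry about the $\nu(x)\in\{1,2\}$ semisimple elements is less severe than you suggest for the non-split tori: for a \emph{fixed} class with $\nu(x)=1$ one has $|x^G\cap N|\leqs 4$ (at most four eigenvalue positions), and summing over the at most $\log(q+1)$ relevant primes $r\mid q-\e$ gives a total contribution of order $q\log q/q^6$, so the threshold $q\geqs 7$ is comfortably attainable. The genuinely delicate case is indeed the split torus at small $q$, which both you and the paper delegate to the computer.
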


\begin{proof}
There are $5$ conjugacy classes in the Weyl group $W = S_4$ and so there are $5$ classes of maximal tori in ${\rm SL}_{4}^{\e}(q)$. The groups with $q \leqs 5$ can be handled using {\sc Magma}, so we will assume $q \geqs 7$.

If $T$ is the image of the split torus $(C_{q-\e})^3$, then $N$ is a soluble maximal subgroup of $G$ and thus \cite[Lemma 6.6]{B20} gives $b(G,N) = 2$. For the remainder, we may assume $|N| \leqs 8(q+1)^3/d$, where $d = (4,q-\e)$. As in the proof of the previous proposition, our aim is to verify the bound $\what{\mathcal{Q}}(G,N,2)<1$. Let $x \in N$ be an element of prime order $r$.

First assume $r \geqs 5$ and observe that $x$ is semisimple and $x^G \cap N \subseteq T$. If $\nu(x) \geqs 2$ then 
\[
|x^G| \geqs \frac{|{\rm GU}_4(q)|}{|{\rm GU}_2(q)|^2} = q^4(q^2-q+1)(q^2+1) = b_1
\]
and we note that there are at most $a_1 = (q+1)^3$ such elements in $N$ (since $|T| \leqs (q+1)^3$). On the other hand, if $\nu(x) = 1$ then $r$ must divide $q-\e$ and we observe that $|x^G \cap N| \leqs 4$ and
\[
|x^G| = \frac{|{\rm GL}_{4}^{\e}(q)|}{|{\rm GL}_{3}^{\e}(q)||{\rm GL}_{1}^{\e}(q)|} \geqs q^3(q^2+1)(q-1) = b.
\]
Therefore, the total contribution to $\what{\mathcal{Q}}(G,N,2)$ from these elements is at most 
\[
\sum_{r \in \pi}4^2(r-1)/b < 16q\log(q+1)/b = c,
\]
where $\pi$ is the set of odd prime divisors $r \geqs 5$ of $q-\e$. 

Next assume $r = 3$. If $x^G \cap N \not\subseteq T$ then $x$ is of the form $(I_2, \omega, \omega^2)$ if $p \ne 3$ and $(J_3,J_1)$ if $p=3$, so 
\[
|x^G| \geqs \frac{|{\rm GU}_{4}(q)|}{|{\rm GU}_{2}(q)||{\rm GU}_{1}(q)|^2} = q^5(q^2-q+1)(q^2+1)(q-1) = b_2
\]
and we have the trivial bound $|x^G \cap N| \leqs |N| \leqs 8(q+1)^3/d = a_2$. On the other hand, if $x^G \cap N \subseteq T$ then $|x^G| \geqs q^3(q^2+1)(q-1) = b_3$ and we note that $i_3(T) \leqs 3^3-1 = a_3$ since $T$ is the direct product of at most  three cyclic groups.

Finally, let us assume $r=2$. If $\nu(x) = 1$ then $|x^G| \geqs q^3(q^2+1)(q-1) = b_4$ and we calculate that $N$ contains at most $(2^3-1) + 2\binom{4}{2}(q+1) = 12q+19 = a_4$ involutions of this form. Similarly, if $\nu(x) = 2$ then $|x^G| \geqs \frac{1}{2}q^4(q-1)(q^3-1) = b_5$ and by evaluating the upper bound in Lemma \ref{l:lu1} we obtain $|x^G \cap N| \leqs (q+1)(q^2+8q+10) = a_5$.

In conclusion, we have 
\[
\what{\mathcal{Q}}(G,N,2) < \sum_{i=1}^5a_i^2/b_i + c < 1
\]
for all $q \geqs 7$ and the result follows.
\end{proof}

\begin{prop}\label{l:psl5}
If $G = {\rm L}_{5}^{\e}(q)$ then $b(G,N) \leqs 3$, with equality if and only if $G = {\rm U}_5(2)$ and $N = 3^4{:}S_5$, in which case $|N \cap N^x| \in \{12, 36, 72, 120, 324\}$ for all $x \in G \setminus N$. 
\end{prop}

\begin{proof}
Here $W = S_5$ and so there are $7$ conjugacy classes of maximal tori to consider. The groups with $q \leqs 5$ can be handled using {\sc Magma}, so we will assume $q \geqs 7$. As before, our aim is to verify the bound $\what{\mathcal{Q}}(G,N,2)<1$. Let $x \in N$ be an element of prime order $r$. 

First observe that the contribution to $\what{\mathcal{Q}}(G,N,2)$ from the elements $x \in G$ with $|x^G|>\frac{1}{2}q^{14} = b_1$ is less than $a_1^2/b_1$, where $a_1 = 120(q+1)^4$ is an upper bound on the order of $N$. For the remainder, we may assume $|x^G| \leqs \frac{1}{2}q^{14}$. If $r=p$ then $r$ must divide $|W|$, so $r \in \{2,3,5\}$, and by considering the given condition on $|x^G|$ we deduce that $r=2$ and $x$ has Jordan form $(J_2^j,J_1^{5-2j})$ on the natural module, where $j=1$ or $2$. If $j=1$ then $|x^G|>\frac{1}{2}q^8 = b_2$ and \eqref{e:lunip} gives $|x^G \cap N| \leqs 10(q+1) = a_2$. Similarly, if $j=2$ then $|x^G|>\frac{1}{2}q^{12} = b_3$ and $|x^G \cap N| \leqs 15(q+1)^2= a_3$. 

Now assume $r \ne p$ (we continue to assume that $|x^G| \leqs \frac{1}{2}q^{14}$). Suppose $x^G \cap N \subseteq T$. If $\nu(x) \geqs 2$ then $|x^G|>\frac{1}{2}q^{12} = b_4$ and we note that $|T| \leqs (q+1)^4 = a_4$. On the other hand, if $\nu(x) = 1$ then $r$ must divide $q-\e$ and we have $|x^G \cap N| \leqs 5$ and $|x^G|>\frac{1}{2}q^8 = b$. Therefore, the combined contribution from semisimple elements $x$ with $x^G \cap N \subseteq T$ and $\nu(x) = 1$ is less than
\[
\sum_{r \in \pi}(r-1) \cdot 5^2/b < 25q\log(q+1)/b = c,
\]
where $\pi$ is the set of prime divisors of $q-\e$. Finally, let us assume $x^G \cap N \not\subseteq T$, so $r \in \{2,3,5\}$. In fact, the assumption $|x^G| \leqs \frac{1}{2}q^{14}$ implies that $r=2$ and $x = (-I_h,I_{5-h})$ with $h \in \{1,2\}$. If $h=1$ then $|x^G|>\frac{1}{2}q^8 = b_5$ and we compute $|x^G \cap N| \leqs 5+\binom{5}{2}(q+1) = 10q+15 = a_5$. And if $h=2$ we get $|x^G|>\frac{1}{2}q^{12} = b_6$ and \eqref{e:lu2} yields
\[
|x^G \cap N| \leqs \binom{5}{2}+3\binom{5}{2}(q+1) + \frac{5!}{2!1!2^2}(q+1)^2 = 15q^2+60q+55 = a_6.
\]

Putting these estimates together, we deduce that 
\[
\what{\mathcal{Q}}(G,N,2) < \sum_{i=1}^6a_i^2/b_i + c < 1
\]
for $q \geqs 7$ and the result follows.
\end{proof}

\section{Symplectic groups}\label{s:symp}

In this section we prove Theorem \ref{t:main} for symplectic groups. Let $G = O^{p'}(\bar{G}^F) = {\rm PSp}_n(q)'$, where $n=2m \geqs 4$ and $q = p^f$ for some prime $p$. Let $W$ be the Weyl group of $\bar{G}$ and note that $W = S_2 \wr S_m < S_n$ is the hyperoctahedral group. Fix an $F$-stable maximal torus $\bar{T}$ of $\bar{G}$ and set $\bar{N} = \bar{T}.W$. We may assume $\bar{T}$ is the group of diagonal matrices in $\bar{G} = {\rm PSp}_n(k)$ (modulo scalars) with respect to a standard symplectic basis $\{e_1,f_1, \ldots, e_m,f_m\}$ of the natural module $\bar{V}$. Set $\bar{V}_i = \la e_i,f_i\ra$ and observe that $\bar{N}$ is contained in the stabiliser of the orthogonal decomposition $\bar{V} = \bar{V}_1 \perp \cdots \perp \bar{V}_m$. More precisely, $\bar{N}$ is the image (modulo scalars) of $\bar{L} \wr S_m$, where 
\begin{equation}\label{e:barL}
\bar{L} = \left\langle {\rm diag}(\l, \l^{-1}), z \,:\, \l \in k^{\times} \right\rangle < {\rm Sp}_2(k)
\end{equation}
and $z=\left(\begin{smallmatrix} \phantom{-}0 & 1 \\ -1 & 0 \end{smallmatrix}\right)$.

Next recall that the conjugacy classes in $W$ (and hence the $F$-classes as well; see Remark \ref{r:Fclasses}) are parameterised by pairs of partitions $(\l,\mu)$ with $|\l|+|\mu|=m$ (here $|\l| \geqs 0$ denotes the sum of the parts comprising $\l$, and similarly for $|\mu|$). Fix an element $w \in W$ corresponding to the pair $(\l,\mu)$, where $\l = (m^{a_m}, \ldots, 1^{a_1})$ and $\mu = (m^{b_m}, \ldots, 1^{b_1})$. Then in the notation of Section \ref{ss:tori}, the $W$-class of $w$ corresponds to an $F$-stable maximal torus $\bar{T}_w$ of $\bar{G}$ and we set $N = N_G(\bar{T}_w) = T.R$. Here $R \cong C_W(w)$ and $T$ is the image (modulo scalars) of a maximal torus 
\[
\what{T} = \left(C_{q^{m}-1}\right)^{a_m} \times \cdots \times \left(C_{q-1}\right)^{a_1} \times \left(C_{q^{m}+1}\right)^{b_m} \times \cdots \times \left(C_{q+1}\right)^{b_1}
\]
of ${\rm Sp}_n(q)$. See \cite[Theorem 3]{BuG} for the precise cyclic structure of $T$.

\begin{lem}\label{l:sp0}
Let $G = {\rm PSp}_{n}(q)$, where $n=2m$ and $N = T.R$ is defined as above. Then 
\[
i_r(N) \leqs \sum_{j=0}^{\lfloor m/r \rfloor} 2^{j(r-1)}\frac{m!}{j!(m-jr)!r^j}(q+1)^{m-j}
\]
for every odd prime divisor $r$ of $|N|$.
\end{lem}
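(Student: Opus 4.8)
The plan is to adapt the proof of Lemma \ref{l:lu1} to the wreath product $W = S_2 \wr S_m$. Every element of order $r$ in $N = T.R$ projects to an element of $\Lambda := \{\sigma \in R : \sigma^r = 1\}$, so that
\[
i_r(N) \le \sum_{\sigma \in \Lambda} i_r(T\sigma),
\]
and it suffices to estimate, for each $\sigma \in \Lambda$, the number of elements of order $r$ in the coset $T\sigma$, together with the number of $\sigma$ of each relevant type.

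First I would count the elements of $\Lambda$. Writing an element of $W = S_2^m \rtimes S_m$ as $(\e, \pi)$ with $\e \in S_2^m$, identified with $\mathbb{F}_2^m$ (written additively), and $\pi \in S_m$, one has $(\e, \pi)^r = \big(\,\sum_{i=0}^{r-1} \pi^i \e,\ \pi^r\,\big)$. Since $r$ is odd, $(\e, \pi)$ has order dividing $r$ precisely when $\pi^r = 1$ and $(1 + \pi + \cdots + \pi^{r-1})\e = 0$ in $\mathbb{F}_2^m$. Hence $\pi$ has cycle-shape $(r^j, 1^{m-jr})$ for some $0 \le j \le \lfloor m/r \rfloor$; and for a fixed such $\pi$, decomposing $\mathbb{F}_2^m$ into $\pi$-orbits shows that the number of admissible $\e$ is $2^{r-1}$ for each $r$-cycle of $\pi$ (on an $r$-cycle the operator $1 + \pi + \cdots + \pi^{r-1}$ has rank $1$) and $1$ for each fixed point of $\pi$ (as $r$ is odd), giving $2^{j(r-1)}$ in total. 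Since $S_m$ contains exactly $m!/(j!\,(m-jr)!\,r^j)$ permutations of cycle-shape $(r^j, 1^{m-jr})$, it follows that $\Lambda$ contains at most $2^{j(r-1)}\,m!/(j!\,(m-jr)!\,r^j)$ elements whose image in $S_m$ has $j$ cycles of length $r$.

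Next, for a fixed $\sigma \in \Lambda$ whose image in $S_m$ has $j$ cycles of length $r$, I would bound $i_r(T\sigma)$ by $(q+1)^{m-j}$, arguing exactly as in the proof of Lemma \ref{l:lu1}: writing $\sigma' = g\sigma g^{-1}$ for the element $g$ with $\bar{T}_w = \bar{T}^g$, the relevant elements of $T\sigma$ correspond to elements $s\sigma'$ of order $r$ with $s$ in the finite subgroup $gTg^{-1}$ of $\bar{T}$; considering the action of $\sigma'$ on $\bar{T}$ (which permutes the $m$ coordinate subtori attached to the blocks $\bar{V}_1, \ldots, \bar{V}_m$ according to the image of $\sigma$ in $S_m$, up to inversions coming from $\bar{L}$) confines these $s$ to an $F$-stable subtorus $\bar{S}$ of $\bar{T}$ of dimension at most $j(r-1) + (m - jr) = m - j$, for instance the image in $\bar{G}$ of the maximal torus of ${\rm SL}_r(k)^j \times {\rm Sp}_{2(m-jr)}(k)$; then Lemma \ref{l:Tsize} gives $i_r(T\sigma) \le |\bar{S}^F| \le (q+1)^{m-j}$. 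Combining this with the count of $\Lambda$ and summing over $j$ yields the stated bound.

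I expect the main point requiring care to be the combinatorial step: identifying that the elements of $W$ of order dividing $r$ lying over a given permutation of cycle-shape $(r^j, 1^{m-jr})$ form a set of size exactly $2^{j(r-1)}$ — this $\mathbb{F}_2$-linear algebra observation is what produces the extra factor $2^{j(r-1)}$ that is absent from the ${\rm SL}_n^{\e}$-analogue in Lemma \ref{l:lu1}. The subtorus bound in the last step is a direct transcription of the argument there, the only new feature being the possible coordinate inversions introduced by the factors $\bar{L} < {\rm Sp}_2(k)$, which do not affect the dimension estimate.
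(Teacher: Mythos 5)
Your proposal is correct and follows essentially the same route as the paper: bound the number of $\sigma\in R$ of order dividing $r$ lying over a permutation of cycle-shape $(r^j,1^{m-jr})$ by $2^{j(r-1)}\,m!/(j!\,(m-jr)!\,r^j)$, then bound $i_r(T\sigma)\leqs (q+1)^{m-j}$ via the subtorus argument of Lemma \ref{l:lu1} together with Lemma \ref{l:Tsize}. The only cosmetic difference is that you obtain the combinatorial factor by solving $(1+\pi+\cdots+\pi^{r-1})\e=0$ over $\mathbb{F}_2$, whereas the paper reads it off as the $W$-conjugacy class size of $(1,\ldots,1)\rho$ in the hyperoctahedral group; the two counts agree.
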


\begin{proof}
Suppose $\s \in R$ has order $r$. Then $\s$ is $W$-conjugate to $(1, \ldots, 1)\rho$, where $\rho \in S_m$ has cycle-shape $(r^j,1^{m-jr})$ for some $j \geqs 1$, and we note that
\[
|\s^W| = 2^{j(r-1)}\frac{m!}{j!(m-jr)!r^j}.
\] 
We can now proceed as in the proof of Lemma \ref{l:lu1} and we omit the details.
\end{proof}

\begin{lem}\label{l:sp1}
Let $G = {\rm PSp}_n(q)$, where $n \geqs 6$. 
\begin{itemize}\addtolength{\itemsep}{0.2\baselineskip}
\item[{\rm (i)}] If $n \leqs 12$ and $q \leqs 7$ then $\eta_G(t)<1$, where $t$ is defined as follows: 
\[
\begin{array}{r|ccccc}
 & q= 2 & 3 & 4 & 5 & 7 \\ \hline
n = 6 & 31/100 & 6/25 & 19/100 & 9/50 & 3/20 \\
8 & 23/100 & 9/50 & 7/50 & 7/50 & 3/25 \\
10 & 9/50 & 7/50 & 11/100 & 11/100 & 9/100 \\
12 & 7/50 & 11/100 & 9/100 & 9/100 & 7/100
\end{array}
\]
\item[{\rm (ii)}] If $n=6$ and $8 \leqs q \leqs 32$ then $\eta_G(t)<1$ for $t = 7/50$.
\item[{\rm (iii)}] If $n \in \{8,10\}$ and $8 \leqs q \leqs 16$ then $\eta_G(t)<1$ for $t = 3/25$.
\item[{\rm (iv)}] If $14 \leqs n \leqs 24$ and $q=2$ then $\eta_G(t)<1$ for $t = 3/25$. 
\end{itemize}
\end{lem}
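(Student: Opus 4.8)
The plan is to follow the computational strategy already used in the proof of Lemma~\ref{l:lu}, now working inside ${\rm Sp}_n(q)$ in its natural matrix representation. Set $d = (2,q-1)$, so that $Z = Z({\rm Sp}_n(q))$ has order $d$ and $G = {\rm PSp}_n(q) = {\rm Sp}_n(q)/Z$; in particular $G = {\rm Sp}_n(q)$ whenever $q$ is even. In each of the finitely many cases listed, the goal is to produce the complete multiset $a_1, \ldots, a_m$ of conjugacy class sizes of elements of prime order in $G$ and then verify in {\sc Magma} \cite{Magma} that $\sum_{i=1}^m a_i^{-t} < 1$ for the stated value of $t$, which gives $\eta_G(t)<1$.

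To assemble this multiset I would apply the \texttt{Classes} (or \texttt{ClassicalClasses}) intrinsic to ${\rm Sp}_n(q)$ and extract the sizes of the classes of elements of prime order. When $q$ is even this is exactly the required list for $G$. When $q$ is odd, for each odd prime $r$ dividing $|G|$ the $r$-elements of ${\rm Sp}_n(q)$ and of $G$ are in a size-preserving bijection: such an element of $G$ lifts to an $r$-element of ${\rm Sp}_n(q)$ since $r \nmid |Z|$, and distinct ${\rm Sp}_n(q)$-classes cannot fuse in $G$ because $-I$ has order $2$. The prime $r = 2$ is the only case needing care, and it is handled exactly as in the linear and unitary setting: an involution of $G$ lifts to $x \in {\rm Sp}_n(q)$ with $x^2 \in Z$, hence of order $2$ or $4$, and the number and sizes of the resulting $G$-classes are read off from the list of ${\rm Sp}_n(q)$-class sizes of such $x$ using the fusion rules recorded in \cite[Chapter~3]{BG} (the multiplicities in the class list detect whether $x^{{\rm Sp}_n(q)}$ contributes a single $G$-class of the same size, or whether $x^{{\rm Sp}_n(q)}$ and $(-x)^{{\rm Sp}_n(q)}$ coincide or fuse).

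For the groups in part~(i) the full conjugacy class computation in ${\rm Sp}_n(q)$ is immediate, so nothing more is required. I expect the main practical obstacle to be parts~(ii)--(iv), where $q$ runs up to $32$ (for $n=6$) or the rank is larger, so that computing every conjugacy class of ${\rm Sp}_n(q)$ becomes expensive; there I would instead restrict attention from the start to elements of prime order. For semisimple $x$ of odd prime order $r$ one enumerates the possible eigenvalue multiplicities of $\hat{x}$ on $\bar{V}$ (equivalently, the $r$-elements of a maximal torus) and computes $|x^G|$ directly from the standard centraliser formulae, while the unipotent and involution contributions are few in number and treated as in part~(i). In every case the resulting multiset of class sizes satisfies $\sum_i a_i^{-t}<1$, as claimed.
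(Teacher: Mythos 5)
Your proposal is correct and follows essentially the same route as the paper: the authors likewise run \texttt{ClassicalClasses} on $L={\rm Sp}_n(q)$, note that odd prime order classes transfer to $G$ with unchanged sizes, and recover the involution classes of $G$ for odd $q$ from the $L$-classes of elements $x\in L\setminus Z$ with $x^2\in Z$ via exactly the multiplicity/fusion bookkeeping you describe (cf.\ \cite[Table B.5]{BG}). The only difference is that the paper applies this uniformly to all of (i)--(iv) without the alternative eigenvalue-enumeration workaround you sketch for the larger cases.
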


\begin{proof}
This is very similar to the proof of Lemma \ref{l:lu}, using {\sc Magma} and the function \texttt{ClassicalClasses} to compute the size of every conjugacy class in the matrix group $L = {\rm Sp}_n(q)$ comprising elements of prime order. Note that if $q$ is odd, then to obtain the sizes of the involution classes in $G$ we first compute the sizes of the $L$-classes of the form $x^L$, where $x \in L \setminus Z$ and $x^2 \in Z$ for $Z = Z(L)$. Fix a class $x^L$ and let $s$ be the multiplicity of $|x^L|$ in this list of class sizes. If $s$ is even, then $G$ has $s/2$ classes of involutions with size $|x^L|$, otherwise $G$ has $s$ classes of size $|x^L|/2$ (see \cite[Table B.5]{BG}, for example).
\end{proof}

\begin{thm}\label{t:sp}
Suppose $G = {\rm PSp}_{n}(q)$ and $n \geqs 6$. Then $b(G,N) \leqs 3$, with equality if and only if $G = {\rm Sp}_6(2)$ and $N = 3^3{:}(S_2 \wr S_3)$, in which case $|N \cap N^x| = 4$ for some $x \in G$.
\end{thm}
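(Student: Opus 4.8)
The plan is to mimic the structure of the proof of Theorem \ref{t:lu} as closely as possible. First I would invoke Lemma \ref{l:small} to dispose of the groups over the smallest fields that are handled computationally, in particular ${\rm Sp}_6(2)$, ${\rm PSp}_6(3)$, ${\rm Sp}_6(4)$, ${\rm PSp}_6(5)$, ${\rm Sp}_8(2)$, ${\rm PSp}_8(3)$, $\O_7(3)$ and $\O_9(3)$ — wait, the last two are orthogonal, so the relevant list here is the symplectic entries; in particular the exceptional case $(G,N) = ({\rm Sp}_6(2), 3^3{:}(S_2\wr S_3))$ with $b(G,N) = 3$ is already settled in the proof of Lemma \ref{l:small}, where it is shown that $b(G,N) \leqs 3$ by exhibiting $x,y$ with $N \cap N^x \cap N^y = 1$ (and $|N\cap N^x| = 4$ for a suitable $x$), and $b(G,N) \geqs 3$ follows from $\log_d|G| > 2$. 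So for the rest of the argument I may assume that $G$ is none of the finitely many groups covered by Lemma \ref{l:small}, and that I am aiming to prove $b(G,N) = 2$.

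The core of the argument is then to verify, via Corollary \ref{c:zeta} (and Lemma \ref{l:sp1} for the low-rank groups over small fields where a sharper $\eta_G(t) < 1$ with $t < 1/3$ is available), the fixed point ratio bound $\a(x) \leqs \tfrac12(1-t)$ for every $x \in N$ of prime order $r$, where $\a(x)$ is as in \eqref{e:alpha}. I would split into the same four cases as before. In Case 1 ($r = p$), the element $x$ is unipotent with $r$ dividing $|W| = 2^m m!$, and since $G$ is symplectic the relevant unipotent classes are controlled by their Jordan form; I would bound $|x^G \cap N|$ by counting elements of the coset $T\s$ with $\s \in R$ of order $r$ (arguing as in Lemma \ref{l:sp0} / Lemma \ref{l:lu1}), bound $|x^G|$ from below using the standard formulas (and the $\nu(x)$ estimates of \cite{fpr2}), and reduce to small $(n, p, f)$ by monotonicity, finishing those with the explicit bounds from Lemma \ref{l:sp1}. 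In Case 2, $C_{\bar G}(x)$ is disconnected — this forces $r = 2$ and $q$ odd, with $|x^G|$ large by a Lang–Steinberg type bound — so the trivial bound $|x^G \cap N| \leqs |N| \leqs 2^m m!\,(q+1)^m$ should suffice outside a bounded range, which is then cleared by Lemma \ref{l:sp1}. Cases 3 and 4 treat semisimple $x$ with $C_{\bar G}(x)$ connected: in Case 3, $(r, |T|) = 1$ and $x$ corresponds to $(1,\dots,1)\s$ with $\s \in R$ of order $r$, so again the coset-counting bound on $|x^G \cap N|$ together with the lower bound on $|x^G|$ in terms of the multiplicity $h$ of the eigenvalue does the job; in Case 4, $r \mid |T|$ and I would introduce the parameter $h$ (maximal such that $x$ lies in a coset $\bar T\s$ with $\s$ of cycle-shape $(r^h, 1^{m-hr})$ in the hyperoctahedral sense), split off the subcase $h = 0$ where $x^G \cap N \subseteq T$ (handled via $\nu(x) = \nu$ and the bounds $|x^G \cap N| \leqs \binom{m}{\nu}2^\nu$ or similar, with $|x^G|$ growing like $q^{2\nu(n-\nu)}$), and for $h \geqs 1$ use an accumulated bound on $|x^G \cap N|$ analogous to \eqref{e:acc}/\eqref{e:nacc}, reducing to finitely many $(n, q, r, h)$ and clearing those with precise class-size computations and Lemma \ref{l:sp1}.

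I expect the main obstacle to be Case 4 with small $r$ (especially $r = 2$, $q$ odd) and small $n$: here $|N \cap N^x|$ can be a relatively large fraction of $|N|$ because $N$ contains many involutions of small $\nu$, and the crude accumulated bounds are not tight enough, so one must work with exact expressions for $|x^G|$ and careful counts of $i_2(N)$ (splitting according to $\nu(x)$ and the parities of the torus factors — cf. the need in Lemma \ref{l:sp1} to separate the involution classes of ${\rm Sp}_n(q)$ that fuse or split in ${\rm PSp}_n(q)$). The residual finite list of groups will be pushed onto Lemma \ref{l:sp1} parts (i)–(iv), which cover exactly the ranges $n \leqs 12$, $q \leqs 7$; $n = 6$, $q \leqs 32$; $n \in \{8,10\}$, $q \leqs 16$; and $n \leqs 24$, $q = 2$ — precisely the windows where $t = 1/3$ is insufficient — so the bookkeeping is to check that every $(n,q)$ not already in Lemma \ref{l:small} either satisfies $\a(x) \leqs 1/3$ for all prime-order $x$, or falls into one of these windows with $\a(x) \leqs \tfrac12(1-t)$. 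Finally, for the excluded computational groups one reads off from Lemma \ref{l:small} that $b(G,N) = 2$ except for ${\rm Sp}_6(2)$, giving the stated characterisation of the equality case.
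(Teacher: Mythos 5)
Your proposal follows essentially the same route as the paper's proof: Lemma \ref{l:small} disposes of the small cases including the exceptional $({\rm Sp}_6(2),\,3^3{:}(S_2\wr S_3))$ with $b(G,N)=3$, and the remaining groups are handled by verifying $\a(x)\leqs\tfrac12(1-t)$ via the same coset-counting bounds on $|x^G\cap N|$ and lower bounds on $|x^G|$, with Lemma \ref{l:sp1} supplying the sharper $t<1/3$ in exactly the windows you identify. The only organisational difference is that the paper isolates $r=p=2$ as its own case (using the Aschbacher--Seitz $a_h/b_h/c_h$ classification, since Jordan form alone does not determine the class in characteristic $2$) and folds $r=p>2$ into the case $(r,|T|)=1$, but this does not change the substance of the argument.
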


\begin{proof}
By considering Lemma \ref{l:small}, we may assume
\begin{equation}\label{e:sp}
(n,q) \not\in \{(6,2), (6,3), (6,4), (6,5), (8,2), (8,3)\}
\end{equation}
and so we will exclude these cases for the remainder of the proof. Set $t=1/3$, with the exception of the groups in Lemma \ref{l:sp1}, where we define $t$ as in the lemma. As in the proof of Theorem \ref{t:lu}, it suffices to show that $\a(x) \leqs (1-t)/2$ for all $x \in N$ of prime order and we partition the proof into several cases. Let $\omega \in k$ be a primitive $r$-th root of unity and let us adopt the notation introduced at the beginning of Section \ref{s:symp}.

\vs

\noindent \emph{Case 1. $r=p=2$.}

\vs

Here $x$ is a unipotent involution and we adopt the standard notation from \cite{AS}. In particular, if $x$ has Jordan form $(J_2^h,J_1^{n-2h})$ on $V$, then either $h$ is odd and $x$ is of type $b_h$, or $h$ is even and $x$ is of type $a_h$ or $c_h$. Recall that $z = \left(\begin{smallmatrix} 0 & 1 \\ 1 & 0 \end{smallmatrix}\right) \in \bar{L}$ (see \eqref{e:barL}).

We begin by describing the involutions in the algebraic group $\bar{N} = N_{\bar{G}}(\bar{T}) = \bar{L} \wr S_m$. Clearly, there are no involutions in $\bar{T}$, while every involution in $\bar{L}$ is conjugate to $z$, which is of type $b_1$ as an element of ${\rm Sp}_2(k)$. Therefore, if $x = (z_1, \ldots, z_m) \in \bar{L}^m$ is an involution with $\ell$ nontrivial components, then $x$ is of type $b_{\ell}$ if $\ell$ is odd, otherwise $x$ is of type $c_{\ell}$ (see \cite[Lemma 3.4.14]{BG}). If $x \in \bar{N} \setminus \bar{L}^m$ is an involution, then there exists $1 \leqs j \leqs m/2$ such that $x$ is conjugate to an element of the form $(1, \ldots, 1, z_{2j+1}, \ldots, z_m)\s$, where $\s = (1,2) \cdots (2j-1,2j) \in S_m$ and $z_i^2 = 1$ for all $i$. If $\ell \geqs 0$ denotes the number of nontrivial $z_i$, then $x$ is of type $a_{2j}$ if $\ell = 0$, type $b_{2j+\ell}$ if $\ell$ is odd and type $c_{2j+\ell}$ if $\ell \geqs 2$ is even. 

First assume $x \in G$ is an involution of type $a_h$, so $h$ is even and $|x^G|>\frac{1}{2}q^{h(n-h)}$ (see \cite[Proposition 3.22]{fpr2}). From the above description of the involutions in $\bar{N}$ we deduce that $x$ is $\bar{G}$-conjugate to $(1, \ldots, 1)\s \in \bar{N}$, where $\s = (1,2)\cdots (h-1,h) \in S_m$. Now
\[
|\s^{W}| = 2^{h/2}\frac{m!}{(h/2)!(m-h)!2^{h/2}} = \frac{m!}{(h/2)!(m-h)!}
\]
and by arguing as in the proof of Lemma \ref{l:sp0} we deduce that  
\[
|x^G \cap N| \leqs \frac{m!}{(h/2)!(m-h)!}(q+1)^{h/2} 
\]
(this is equality if $T = (C_{q+1})^m$). It is straightforward to verify the bound $\a(x) \leqs (1-t)/2$.

Now assume $x$ is of type $b_h$ or $c_h$, according to the parity of $h$. The case $h=1$ requires special attention. Here $|x^G| = q^n-1$ and $x$ is $\bar{G}$-conjugate to $(z,1,\ldots, 1) \in \bar{L}^m$, which implies that $|x^G \cap N| \leqs m(q+1)$ and we deduce that $\a(x) \leqs (1-t)/2$. Now assume $h \geqs 2$, so $|x^G| > \frac{1}{2}q^{h(n-h+1)}$. Here there exists an integer $j$ in the range $0 \leqs j < h/2$ such that $x$ is $\bar{G}$-conjugate to an element in $\bar{N}$ of the form $(z_1, \ldots, z_m)\s$, where $\s = (1,2)\cdots (2j-1,2j) \in S_m$ and $z_i$ is nontrivial (and equal to $z$) if and only if $2j+1\leqs i \leqs h$. As a consequence, we deduce that
\begin{equation}\label{e:sym1}
|x^G \cap N| \leqs \sum_{j=0}^{\lceil h/2 \rceil-1} \frac{m!}{j!(m-2j)!2^j}2^j(q+1)^j \binom{m-2j}{h-2j}(q+1)^{h-2j}
\end{equation}
(once again, this is equality if $T = (C_{q+1})^m$) and thus 
\[
|x^G \cap N| \leqs \frac{m!}{(m-h)!}(q+1)^hf(h),
\]
where
\[
f(h) = \sum_{j=0}^{\lceil h/2 \rceil-1} \frac{1}{j!(h-2j)!3^j}.
\]
One can check that $f(h)$ is a decreasing function, so $f(h) \leqs f(2) = 1/2$ and we deduce that
\begin{equation}\label{e:sym2}
|x^G \cap N| \leqs \frac{m!}{(m-h)!2}(q+1)^h.
\end{equation}
By considering the bound in \eqref{e:sym2}, we may assume $n \leqs 20$ and $q = 2$. In each of these cases, one can check that the upper bound in \eqref{e:sym1} is sufficient unless $(n,q) = (10,2)$ and $x = b_3$. In the latter case, the more accurate  bounds $|x^G|>2^{24}$ and 
\[
|x^G \cap N| \leqs \binom{5}{3}3^3 + 2\cdot \frac{5!}{3!2}3^3 = 810
\]
are sufficient.

\vs

\noindent \emph{Case 2. $r \ne p$, $r=2$.}

\vs

Now suppose $x$ is a semisimple involution, so $q$ is odd. First assume $x$ lifts to an element of order $4$ in ${\rm Sp}_n(q)$, in which case 
\[
|x^G| = \frac{|{\rm Sp}_n(q)|}{2|{\rm GL}_m^{\e}(q)|} > \frac{1}{4}\left(\frac{q}{q+1}\right)q^{m(m+1)},
\]
where $q \equiv \e \imod{4}$. Here the trivial bound
\[
|x^G \cap N| \leqs |N| \leqs (q+1)^m|W|
\]
is sufficient if $n \geqs 18$ or $q \geqs 81$, so we may assume $n \leqs 16$ and $q \leqs 79$. To handle these cases, we can work with the more accurate bound
\[
|x^G \cap N| \leqs i_2(N) \leqs (q+1)^m(1+i_2(W)),
\]
where 
\[
1+i_2(W) = \sum_{j=0}^{\lfloor m/2 \rfloor}\frac{m!}{j!(m-2j)!}2^{m-2j}.
\]
One can check that the given bounds on $|x^G \cap N|$ and $|x^G|$ are sufficient. 

For the remainder, let us assume $x$ lifts to an involution in ${\rm Sp}_n(q)$ of the form $(-I_{2\ell}, I_{n-2\ell})$ for some $1 \leqs \ell \leqs \lfloor m/2 \rfloor$. Then for some integer $0 \leqs j \leqs \ell$, $x$ is $\bar{G}$-conjugate to an element in $\bar{N}$ of the form $(z_1, \ldots, z_m)\s$, where $\s = (1,2) \cdots (2j-1,2j) \in S_m$ and $z_i$ is nontrivial (and equal to $-I_2$) if and only if $2j+1 \leqs i \leqs \ell+j$. This implies that
\[
|x^G \cap N| \leqs \sum_{j=0}^{\ell} \frac{m!}{j!(m-2j)!2^j}2^j(q+1)^j\binom{m-2j}{\ell-j}, 
\]
which in turn yields 
\begin{equation}\label{e:spp2}
|x^G \cap N| \leqs  \frac{m!}{(m-2\ell)!}(q+1)^{\ell}\left(\sum_{j=0}^{\ell}\frac{1}{j!(\ell-j)!}\right) \leqs \frac{m!}{(m-2\ell)!}2(q+1)^{\ell}.
\end{equation}
Now $|x^G|>\frac{1}{2a}q^{2\ell(n-2\ell)}$, where $a=2$ if $\ell=m/2$, otherwise $a=1$, and one can check that this bound with \eqref{e:spp2} is always sufficient.

\vs

\noindent \emph{Case 3. $r>2$, $(r,|T|)=1$.}

\vs

Here $x$ is $\bar{G}$-conjugate to an element of the form $(1, \ldots, 1)\s \in \bar{N}$, where $\s \in S_m$ has cycle-shape $(r^{j},1^{m-jr})$ for some integer $j$ in the range $1 \leqs j \leqs \lfloor m/r \rfloor$. If $r=p$ then $x$ has Jordan form 
$(J_r^{2j},J_1^{n-2j r})$ on $V$, so  
\[
|x^G| = \frac{|{\rm Sp}_{n}(q)|}{q^{j(2n-2j-2jr)}|{\rm Sp}_{2j}(q)||{\rm Sp}_{n-2j r}(q)|} > \frac{1}{2}q^{j(r-1)(2n-2j r+1)}
\]
and by arguing as in the proof of Lemma \ref{l:sp0} we deduce that 
\begin{equation}\label{e:spp3}
|x^G \cap N| \leqs 2^{j(r-1)}\frac{m!}{j!(m-jr)!r^{j}}(q+1)^{j(r-1)}.
\end{equation}
Similarly, if $r \ne p$ then $x$ has Jordan form $(I_{n-2j(r-1)},\omega I_{2j}, \ldots, \omega^{r-1}I_{2j})$ on the natural module for $\bar{G}$, so 
\[
|x^G| > \frac{1}{2}\left(\frac{q}{q+1}\right)^{(r-1)/2}q^{j(r-1)(2n-2jr+1)}
\]
and \eqref{e:spp3} holds. In all cases, one can check that these bounds are sufficient.

\vs

\noindent \emph{Case 4. $r>2$, $r$ divides $|T|$.}

\vs

To complete the proof, we may assume $x$ is semisimple and $r$ is an odd prime divisor of $|T|$. Let $0 \leqs \ell \leqs \lfloor m/r \rfloor$ be maximal such that $x$ is $\bar{G}$-conjugate to an element in a coset $\bar{T}\pi$, where $\pi \in W$ is of the form $(1, \ldots, 1)\s$ and $\s \in S_m$ has cycle-shape $(r^{\ell},1^{m-r\ell})$. 

First assume $\ell=0$, so $x^G \cap N \subseteq T$. Set $s = \nu(x)$ (see Definition \ref{d:nu}). If $s=2$ then $x$ is $\bar{G}$-conjugate to an element of the form $(I_{n-2}, \omega, \omega^{-1})$, so $|x^G \cap N| \leqs 2m$ and the bound $|x^G|>\frac{1}{2}(q+1)^{-1}q^{2n-1}$ is sufficient. Next  assume $n=6$ and $s \geqs 3$. Here $|x^G|>\frac{1}{2}(q+1)^{-1}q^{13}$ and one can check that the bound $|x^G \cap N| \leqs (q+1)^3$ is good enough (recall that we may assume $q \geqs 7$; see \eqref{e:sp}). Finally, if $n \geqs 8$ and $s \geqs 4$ then $|x^G|>\frac{1}{2}(q+1)^{-1}q^{4n-15}$ and the trivial bound $|x^G \cap N| \leqs (q+1)^m$ is sufficient.

Now suppose $\ell \geqs 1$, so $r \leqs m$ and each $r$-th root of unity has multiplicity at least $2\ell$ as an eigenvalue of $x$ on $\bar{V}$. Now $|x^G|$ is minimal when $x$ is $\bar{G}$-conjugate to an element of the form $(I_{n-2\ell(r-1)},\omega I_{2\ell}, \ldots, \omega^{r-1}I_{2\ell})$, which implies that 
\[
|x^G|>\frac{1}{2}\left(\frac{q}{q+1}\right)^{(r-1)/2}q^{\ell(r-1)(2n-2\ell r+1)}.
\]
Next observe that there exists an integer $j$ in the range $0 \leqs j \leqs \ell$ such that $x$ is $\bar{G}$-conjugate to an element of the form $(z_1, \ldots, z_m)\rho \in \bar{N}$, where 
\[
\rho = (1, \ldots, r) (r+1, \ldots, 2r) \cdots ((j-1)r+1, \ldots, jr) \in S_m
\]
has cycle-shape $(r^j,1^{m-rj})$ and $z_i = {\rm diag}(\l_i,\l_i^{-1})$ is nontrivial (of order $r$) only if $i>jr$. Since there are at most $r$ possibilities for each $z_i$ with $i>jr$, we deduce that  
\begin{equation}\label{e:sp2}
|x^G \cap N| \leqs \sum_{j=0}^{\ell} 2^{j(r-1)}\frac{m!}{j!(m-jr)!r^j}(q+1)^{j(r-1)}r^{m-jr},
\end{equation}
which in turn implies that
\begin{equation}\label{e:sp3}
|x^G \cap N| \leqs (2(q+1))^{\ell(r-1)}r^m \sum_{j=0}^{\ell} \left(\frac{m^r}{r}\right)^j
\leqs 2\left(\frac{m^r}{r}\right)^{\ell}\left(2(q+1)\right)^{\ell(r-1)}r^m.
\end{equation}

Suppose $q \geqs 3$. Here one checks that the upper bound on $|x^G \cap N|$ in \eqref{e:sp3} is sufficient if $n \geqs 18$ or $q \geqs 23$, so we may assume $n \leqs 16$ and $q \leqs 19$. In the remaining cases, we find that the more accurate upper bound in \eqref{e:sp2} is sufficient. Similarly, if $q=2$ then the bound in \eqref{e:sp3} is good enough for $n \geqs 40$ and we see that \eqref{e:sp2} is sufficient if $12 \leqs n \leqs 38$. Finally, suppose $(n,q) = (10,2)$. Here \eqref{e:sp3} is sufficient unless $(r,\ell) = (3,1)$, which means that $x$ is of the form $(I_6,\omega I_2, \omega^2I_2)$ or $(I_4,\omega I_3, \omega^2I_3)$. In the latter case, $|x^G|>2^{35}$ and the bound in \eqref{e:sp3} is good enough. On the other hand, if $x = (I_6,\omega I_2, \omega^2I_2)$ then $|x^G|>2^{29}$ and the result follows since
\[
|x^G \cap N| \leqs \binom{5}{2}2^2 + 2^2\cdot \frac{5!}{2!3}3^2 = 760.
\]
\end{proof}

The following result completes the proof of Theorem \ref{t:main} for symplectic groups.
 
\begin{prop}\label{l:psp4}
The conclusion to Theorem \ref{t:main} holds if $G = {\rm PSp}_{4}(q)'$.
\end{prop}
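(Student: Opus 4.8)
The plan is to combine a direct {\sc Magma} computation for small $q$ with the probabilistic method of Section \ref{ss:prob} for the remaining cases. First recall that the Weyl group $W = S_2 \wr S_2 \cong D_8$ has five conjugacy classes, which coincide with the $F$-classes by Remark \ref{r:Fclasses}. As in Section \ref{s:symp}, these yield five classes of maximal tori in ${\rm Sp}_4(q)$, and the corresponding subgroups $N = T.R$ have $R \cong C_W(w)$ and $T$ equal to the image modulo scalars of one of
\[
(C_{q-1})^2, \;\; (C_{q+1})^2, \;\; C_{q-1} \times C_{q+1}, \;\; C_{q^2-1}, \;\; C_{q^2+1},
\]
with the precise structure of $T$ recorded in \cite{BuG}. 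For small $q$ --- say $q \leqs 11$ --- we verify $b(G,N) = 2$ directly with {\sc Magma}, proceeding as in the proof of Lemma \ref{l:small}; here we note the isomorphisms ${\rm PSp}_4(2)' \cong A_6$ and ${\rm PSp}_4(3) \cong {\rm U}_4(2)$, and observe that there is no conflict with Proposition \ref{l:psl4} since none of the symplectic maximal torus normalisers of ${\rm PSp}_4(3)$ coincides with the subgroup $3^3{:}S_4$ appearing there (indeed $|3^3{:}S_4| = 648$ exceeds the order of every such normaliser). Since none of these groups appears in Table \ref{tab:main}, it remains to show $\what{\mathcal{Q}}(G,N,2) < 1$ for $q$ outside the computational range, whence $b(G,N) = 2$ by Proposition \ref{p:base}. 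Note that, in contrast to the linear and unitary cases of Propositions \ref{l:psl2}--\ref{l:psl5}, none of the subgroups $N$ above is maximal in $G$, so there is no shortcut via \cite{B20}.

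Now fix such a $q$ and let $x \in N$ have prime order $r$. Since $T$ has order coprime to $p$, any unipotent element of $N$ projects to a nontrivial $p$-element of $R \leqs W$, which forces $p = 2$ as $|W| = 8$; in particular, for $q$ odd every prime-order element of $N$ is semisimple. We mirror the case division in the proof of Theorem \ref{t:sp}. If $q$ is even and $x$ is a unipotent involution, then $x$ has type $b_1$, $a_2$ or $c_2$ in the notation of \cite{AS}, with $\nu(x) \in \{1,2\}$; exactly as in Case 1 of that proof we bound $|x^G \cap N|$ by counting the elements of the relevant $\bar G$-class inside $\bar N = \bar L \wr S_2$ (using Lemma \ref{l:Tsize} for the torus contribution) and combine this with the lower bound on $|x^G|$ from \cite[Section 3]{fpr2}. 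If $x$ is semisimple of prime order $r$ we treat $r = 2$ (so $q$ is odd) and $r$ odd separately, distinguishing in each case whether $x^G \cap N \subseteq T$ and recording $s = \nu(x) \in \{1,2\}$: since $m = 2$ there are no nontrivial $r$-cycles in $S_m$ for $r \geqs 3$, so for odd $r$ we automatically have $x^G \cap N \subseteq T$ and hence $|x^G \cap N| \leqs (q+1)^2$, while for $r = 2$ we bound $|x^G \cap N|$ by the counting argument underlying Lemmas \ref{l:sp0} and \ref{l:sp1}. The lower bounds on $|x^G|$ are the standard estimates in terms of $\nu(x)$ from \cite[Section 3]{fpr2}. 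Grouping the contributions of the various families of semisimple elements by means of Lemma \ref{l:calc}, as in Propositions \ref{l:psl3}--\ref{l:psl5}, we obtain a bound of the form $\what{\mathcal{Q}}(G,N,2) \leqs \sum_i a_i^2/b_i + c$, and one checks that this is less than $1$ for every $q$ beyond the computational range.

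I expect the main obstacle to be the low rank. There are only a handful of prime-order classes, and several are comparatively small --- for instance the transvection class of size $q^4 - 1$ when $p = 2$, or the involution class of type $(-I_2,I_2)$ of size roughly $q^2(q^2+1)$ when $q$ is odd --- so to keep $\what{\mathcal{Q}}(G,N,2)$ below $1$ one must pair sufficiently sharp upper bounds for $|x^G \cap N|$ with the precise expressions for $|x^G|$, and for the borderline intermediate values of $q$ it will be necessary to use the more accurate counting estimates (the analogues of \eqref{e:sym1}, \eqref{e:spp2} and \eqref{e:sp2}) in place of the cruder ones. A secondary technical point is choosing the computational threshold on $q$ large enough that these generic estimates take over, together with the book-keeping around the exceptional isomorphisms so that the cases $q = 2$ and $q = 3$ are genuinely covered by the {\sc Magma} computation.
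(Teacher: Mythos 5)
Your proposal is correct and follows essentially the same route as the paper: a {\sc Magma} check for small $q$ (the paper uses $q \leqs 8$), then the bound $\what{\mathcal{Q}}(G,N,2)<1$ via Proposition \ref{p:base} and Lemma \ref{l:calc}, with the trivial bound $|x^G \cap N| \leqs |N| \leqs 8(q+1)^2/d$ handling every class with $\dim x^{\bar{G}} \geqs 6$ and a sharper count reserved for the two critical small classes you correctly single out (the involutions $(-I_2,I_2)$ for $q$ odd and the long/short root elements for $q$ even). The only divergence is in how that sharper count is obtained: the paper works with the explicit finite structure of $\what{N}$ (generalised quaternion and dihedral factors, and the involutions in the field-automorphism coset when $\what{T}=C_{q^2-\e}$) to get $|x^G\cap N| \leqs q+2$ and $\leqs 2q+2$ respectively, whereas your generic count inside $\bar{L}\wr S_2$ via Lemma \ref{l:Tsize} gives roughly $2q+4$; both are linear in $q$ against class sizes of order $q^4$, so either suffices for $q$ beyond the computational threshold.
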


\begin{proof}
Set $\what{G} = {\rm Sp}_4(q)$ and $Z = Z(\what{G})$, so we may write $G = \what{G}/Z$, $T = \what{T}/Z$ and $N = \what{N}/Z$, where $\what{N} = N_{\what{G}}(\bar{T})$. The Weyl group $W = S_2 \wr S_2 = D_8$ has $5$ conjugacy classes and up to conjugacy we see that $\what{T}$ is one of the following:
\[
(C_{q-\e})^2, \; C_{q+1}\times C_{q-1},\; C_{q^2-\e}
\]
with $\e=\pm$ and thus $N = [(q-\e)^2/d].D_8$, $[(q^2-1)/d].2^2$ or $[(q^2+1)/d].4$,  where $d=(2,q-1)$. The groups with $q \leqs 8$ can be handled using {\sc Magma}, so we will assume $q \geqs 9$.

We claim that $\what{\mathcal{Q}}(G,N,2)<1$ and thus $b(G,N) =2$. Let $x \in N$ be an element of prime order $r$. If $\dim x^{\bar{G}} \geqs 6$ then we have the trivial bound $|x^G \cap N| \leqs |N| \leqs 8(q+1)^2/d=a_1$ and $|x^G| \geqs b_1$, where
\[
b_1 = \left\{\begin{array}{ll}
\frac{1}{2}q(q-1)(q^4-1) & \mbox{$q$ odd} \\
q^3(q-1)(q^2+1) & \mbox{$q$ even.}
\end{array}\right.
\]
Therefore, the contribution to $\what{\mathcal{Q}}(G,N,2)$ from these elements is less than $a_1^2/b_1 < 2/3$. For the remainder, we may assume $\dim x^{\bar{G}} = 4$, so $x$ is an involution and either $q$ is odd and $x = (-I_2,I_2)$, or $q$ is even and $x$ is a long or short root element. 

First assume $q$ is odd, so 
\[
|x^G| = \frac{|{\rm Sp}_4(q)|}{2|{\rm Sp}_2(q)|^2} = \frac{1}{2}q^2(q^2+1)=b_2
\]
and $|x^G \cap N| = (i_2(\what{N})-1)/2$. We claim that $i_2(\what{N}) \leqs 2q+5$ and thus $|x^G \cap N| \leqs q+2 = a_2$.
To see this, first assume $\what{T} = (C_{q-\e})^2$ and note that $\what{N} = Q_{2(q-\e)} \wr S_2 < {\rm Sp}_{2}(q) \wr S_2$, where $Q_{2(q-\e)}$ is the generalised quaternion group of order $2(q-\e)$. Here $i_2(\what{N}) = 3+2(q-\e) \leqs 2q+5$ as required. Similarly, if $\what{T} = C_{q+1} \times C_{q-1}$ then $\what{N} = Q_{2(q+1)} \times Q_{2(q-1)}$ and $i_2(\what{N})=3$. 

Finally, suppose $\what{T} = C_{q^2-\e}$, so $\what{T} < {\rm Sp}_{2}(q^2)$ and $\what{N} = N_{{\rm Sp}_2(q^2)}(\what{T}).\la \phi \ra = Q_{2(q^2-\e)}.\la \phi \ra$, where $\phi$ is an involutory field automorphism of ${\rm Sp}_2(q^2)$. Since $Q_{2(q^2-\e)}$ has a unique involution, it remains to show that there are at most $2q+4$ involutions in the coset $Q_{2(q^2-\e)}\phi$. First assume $\e=+$ and write $\mathbb{F}_{q^2}^{\times} = \la \l \ra$. Then by taking $\what{T}$ to be the diagonal matrices in ${\rm Sp}_{2}(q^2)$ we get
\[
\what{N} = \left\la {\rm diag}(\l,\l^{-1}), \tau, \phi \right\ra, \;\; \mbox{with } \tau = \left(\begin{smallmatrix} \phantom{-}0 & 1 \\ -1 & 0 \end{smallmatrix}\right) \in {\rm Sp}_{2}(q^2)
\]
and it is straightforward to show that $i_2(\what{N}) = 2q+1$. On the other hand, if $\e=-$ then $\what{N} = C_{q^2+1}.4$ and every involution in $\what{N}$ is contained in $C_{q^2+1}.2 = Q_{2(q^2+1)}$. Therefore $i_2(\what{N}) = 1$ and thus $|x^G \cap N| = 0$.  

This justifies the claim and we conclude that 
\[
\what{\mathcal{Q}}(G,N,2) < a_1^2/b_1 + a_2^2/b_2<1
\]
if $q \geqs 9$ is odd.

Finally, let us assume $q \geqs 16$ is even and $x$ is a long or short root element, so $|x^G| = q^4-1 = b_3$. We claim that $|x^G \cap N| \leqs 2q+2 = a_3$. In order to establish the claim, let us first assume $T = (C_{q-\e})^2$, so $N = D_{2(q-\e)} \wr S_2$. The long root elements in $N$ correspond to the involutions in each $D_{2(q-\e)}$ factor, so $|x^G \cap N| = 2(q-\e)$. Similarly, the short root elements in $N$ are the involutions outside $(D_{2(q-\e)})^2$, so once again $|x^G \cap N| = 2(q-\e)$. Next suppose $T = C_{q+1} \times C_{q-1}$. Here $N = D_{2(q+1)} \times D_{2(q-1)}$ and the long root elements correspond to the involutions in each factor, so $|x^G \cap N| = 2q$, and we note that $N$ does contain any short root elements. Finally, suppose $T = C_{q^2-\e}$, in which case $N = D_{2(q^2-\e)}.2$ with $D_{2(q^2-\e)} < {\rm Sp}_{2}(q^2)$ and we note that $N$ does not contain any long root elements. If $\e=-$ then $N = C_{q^2+1}.4$ and every involution in $N$ is contained in $D_{2(q^2+1)}$, so $N$ does not contain any root elements at all in this case. On the other hand, if $\e=+$ then 
\[
N = \left\la {\rm diag}(\l,\l^{-1}), \tau, \phi \right\ra, \;\; \mbox{with } \tau = \left(\begin{smallmatrix} 0 & 1 \\ 1 & 0 \end{smallmatrix}\right) \in {\rm Sp}_{2}(q^2),
\]
where $\phi$ is an involutory field automorphism of ${\rm Sp}_{2}(q^2)$. It is straightforward to check that the coset $D_{2(q^2-1)}\phi$ contains $q+1$ involutions. 

For $q \geqs 16$ even, we conclude that 
\[
\what{\mathcal{Q}}(G,N,2) < a_1^2/b_1 + 2a_3^2/b_3<1
\]
and this completes the proof of the proposition. 
\end{proof}

\section{Odd dimensional orthogonal groups}\label{s:o_odd}

In order to complete the proof of Theorem \ref{t:main} for classical groups, we may assume $G$ is an orthogonal group. In this section we assume $G = \O_n(q)$, where $n \geqs 7$ is odd; the even dimensional orthogonal groups will be handled in Section \ref{s:o_even}.

Write $n=2m+1$ and note that $q$ is odd. Fix an $F$-stable maximal torus $\bar{T}$ of $\bar{G}$ and set $\bar{N} = N_{\bar{G}}(\bar{T}) = \bar{T}.W$, where $W = S_2 \wr S_m$ is the hyperoctahedral group. We may assume $\bar{T}$ is the group of diagonal matrices in $\bar{G} = {\rm SO}_n(k)$ of the form
\[
{\rm diag}(\l_1, \l_1^{-1}, \ldots, \l_m, \l_{m}^{-1},1) 
\]
with respect to a standard orthogonal basis $\{e_1,f_1, \ldots, e_m,f_m,v\}$ of the natural module $\bar{V}$. Note that $\bar{N}$ stabilises the orthogonal decomposition $\bar{V} = \bar{V}_1 \perp \cdots \perp \bar{V}_m \perp \la v \ra$, where $\bar{V}_i = \la e_i,f_i\ra$. More precisely, if we set 
\[
\bar{L} = \left\langle {\rm diag}(\l,\l^{-1}), z \,:\, \l \in k^{\times} \right\rangle  = {\rm O}_2(k), 
\]
where $z=\left(\begin{smallmatrix} 0 & 1 \\ 1 & 0 \end{smallmatrix}\right)$, then 
\[
\bar{N} = \left\{ {\rm diag}(A,1), \, {\rm diag}(B,-1)
\,:\, A,B \in \bar{L} \wr S_m, \,  \det(A) = 1, \, \det(B) = -1 \right\}.
\]
We will use the notation $(z_1, \ldots, z_m,y)\s$ to denote a general element of $\bar{N}$, where $z_i \in \bar{L}$, $y = \prod_i \det(z_i) \in \{-1,1\}$ and $\s \in S_m$. 

Recall from Section \ref{ss:tori} that there is a bijection from the set of conjugacy classes in $W$ to the set of $\bar{G}^F$-classes of $F$-stable maximal tori in $\bar{G}$. As recorded in the previous section, the conjugacy classes in $W$ are parameterised by pairs of partitions $(\l,\mu)$ with $|\l|+|\mu|=m$. Fix an element $w \in W$ corresponding to the pair $(\l,\mu)$, where $\l = (m^{a_m}, \ldots, 1^{a_1})$ and $\mu = (m^{b_m}, \ldots, 1^{b_1})$. Then the $W$-class of $w$ corresponds to an $F$-stable maximal torus $\bar{T}_w$ of $\bar{G}$ and we set $N = N_G(\bar{T}_w) = T.R$, where $R \cong C_W(w)$ and $T = \what{T} \cap G$ with 
\[
\what{T} = \left(C_{q^{m}-1}\right)^{a_m} \times \cdots \times \left(C_{q-1}\right)^{a_1} \times \left(C_{q^{m}+1}\right)^{b_m} \times \cdots \times \left(C_{q+1}\right)^{b_1} < {\rm SO}_n(q).
\]
We refer the reader to \cite[Theorem 4]{BuG} for the cyclic structure of $T$.

\begin{lem}\label{so_odd1}
Let $G = \O_n(q)$, where $n \geqs 7$ is odd.
\begin{itemize}\addtolength{\itemsep}{0.2\baselineskip}
\item[{\rm (i)}] If $n \leqs 11$ and $q \leqs 13$ then $\eta_G(t)<1$, where $t$ is defined as follows: 
\[
\begin{array}{r|cccccc}
 & q= 3 & 5 & 7 & 9 & 11 & 13 \\ \hline
n = 7 & 23/100 & 17/100 & 7/50 & 7/50 & 13/100 & 13/100 \\
9 & 17/100 & 13/100 & 11/100 & 1/10 & 11/100 & 11/100 \\
11 & 13/100 & 1/10 & 9/100 & 2/25 & 2/25 & 2/25
\end{array}
\]
\item[{\rm (ii)}] If $13 \leqs n \leqs 21$ and $q=3$ then $\eta_G(t)<1$ for $t = 11/100$.
\end{itemize}
\end{lem}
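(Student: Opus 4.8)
The proof is computational and runs along the same lines as those of Lemmas \ref{l:lu} and \ref{l:sp1}. The plan is to use {\sc Magma} \cite{Magma} to determine the complete list $a_1, \ldots, a_k$ of sizes of the conjugacy classes in $G = \O_n(q)$ of elements of prime order, and then to verify the inequality $\sum_{i=1}^{k} a_i^{-t}<1$ for the value of $t$ recorded in the statement; in view of the definition \eqref{e:etaG} of $\eta_G$, this immediately gives $\eta_G(t)<1$ as required.

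To assemble this list, I would work with the matrix group $L = {\rm SO}_n(q)$ in its natural representation and apply \texttt{ClassicalClasses} to read off the sizes of the $L$-classes of prime order elements (alternatively, one can apply \texttt{ClassicalClasses} directly to $G = \O_n(q)$). Since $q$ is odd, $G$ has index $2$ in ${\rm SO}_n(q)$, and a routine argument with centralisers shows that an ${\rm SO}_n(q)$-class $x^L$ with $x \in G$ is either a single $G$-class or a union of two $G$-classes of size $|x^L|/2$, according to whether or not $C_L(x) \leqs G$. For the involution classes one may proceed exactly as in the symplectic case treated in Lemma \ref{l:sp1}: if a given ${\rm SO}_n(q)$-class size occurs with multiplicity $s$ in the list above, then $G$ has $s/2$ classes of that size when $s$ is even and $s$ classes of half that size otherwise, with the relevant class-size formulas and fusion data recorded in \cite[Chapter 3]{BG}. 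The unipotent classes are handled in the same way. Carrying this out for each of the finitely many pairs $(n,q)$ in parts (i) and (ii) then produces the desired list.

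With the list $a_1, \ldots, a_k$ in hand, checking $\sum_i a_i^{-t}<1$ is a routine arithmetic calculation. I expect the main obstacle to be bookkeeping rather than anything conceptual: one must correctly identify which ${\rm SO}_n(q)$-classes split on restriction to $G$, in particular among the unipotent elements and involutions, and in the largest cases — notably $n=21$, $q=3$ in part (ii), and $n=11$, $q=13$ in part (i) — the computation relies on the fact that \texttt{ClassicalClasses} returns class sizes via closed-form formulas for classical groups, so that the size of the ambient matrix group causes no difficulty. As a consistency check, the computed data can be cross-referenced against the conjugacy class information for orthogonal groups in \cite{BG}.
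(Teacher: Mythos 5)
Your overall approach --- a direct {\sc Magma} computation of the prime order class sizes followed by a numerical verification of $\sum_i a_i^{-t}<1$ --- is exactly what the paper does, and your parenthetical alternative (apply \texttt{ClassicalClasses} directly to $G = \O_n(q)$) is precisely the paper's route: since $n$ is odd, $\O_n(q)$ is itself a matrix group with trivial centre, so no fusion or splitting analysis is needed at all.

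However, the primary route you describe contains a conceptual error. The multiplicity recipe from Lemma \ref{l:sp1} (``if the class size occurs with multiplicity $s$, then $G$ has $s/2$ classes of that size when $s$ is even, otherwise $s$ classes of half that size'') governs the \emph{fusion} of $L$-classes under a central quotient $L \to L/Z$, where distinct classes $x^L$ and $(zx)^L$ collapse to a single class of $L/Z$. Here $Z({\rm SO}_n(q))=1$ because $-I_n \notin {\rm SO}_n(q)$ for $n$ odd, so that phenomenon does not arise; the relevant issue in passing from $L = {\rm SO}_n(q)$ to the index-two subgroup $G = \O_n(q)$ is the \emph{splitting} of a class $x^L$ into two $G$-classes, which is governed by whether $C_L(x) \leqs G$ and cannot be read off from multiplicities in the list of $L$-class sizes. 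You state the correct centraliser criterion in one sentence and then propose the inapplicable multiplicity recipe in the next; implemented as written, the latter would assign incorrect class sizes. Since your alternative route sidesteps the issue entirely and matches the paper, the lemma is still safely established, but the main route should either be dropped or corrected to use the centraliser test.
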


\begin{proof}
This is an entirely straightforward {\sc Magma} calculation, working with the standard matrix representation of $G$ over $\mathbb{F}_q$ and the function 
\texttt{ClassicalClasses} to compute the relevant conjugacy class sizes. 
\end{proof}

\begin{thm}\label{t:so_odd}
If $G = \O_{n}(q)$ with $n \geqs 7$, then $b(G,N) = 2$.
\end{thm}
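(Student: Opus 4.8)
The plan is to follow the strategy of Theorems~\ref{t:lu} and~\ref{t:sp}. In view of Lemma~\ref{l:small} we may assume $(n,q) \ne (7,3), (9,3)$, and for the remaining groups we set $t = 1/3$, with the exception of the low-dimensional groups over small fields covered by Lemma~\ref{so_odd1}, where we take the corresponding smaller value of $t$. Since $n \geqs 7 > 6$, Proposition~\ref{p:eta} (respectively Lemma~\ref{so_odd1}) gives $\eta_G(t) < 1$, so by the argument in Section~\ref{ss:prob} it suffices to verify that $\a(x) \leqs \tfrac{1}{2}(1-t)$ for every $x \in N$ of prime order $r$; this yields $\what{\mathcal{Q}}(G,N,2) \leqs \eta_G(t) < 1$ and hence $b(G,N) = 2$ by Proposition~\ref{p:base}. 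As usual the verification splits into the cases $r = p$; $r = 2 \ne p$; $r > 2$ with $(r,|T|) = 1$; and $r > 2$ with $r \mid |T|$.

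To bound $|x^G \cap N| \leqs i_r(N)$ I would use the explicit description of $\bar{N} = \bar{T}.W$ recorded at the start of the section. Every prime-order element of $\bar{N}$ has a normal form $(z_1, \ldots, z_m, y)\s$ with $\s \in S_m$ of exactly the kind used in the symplectic case: in the unipotent case (here $r = p$ is odd since $q$ is odd, and $r \leqs m$ since $r \mid |W| = 2^m m!$) one has all $z_i = 1$ and $\s$ of cycle-shape $(r^j, 1^{m-jr})$, so $x$ has Jordan form $(J_r^{2j}, J_1^{n-2jr})$ on $V$; for a semisimple involution the nontrivial $z_i$ are either the reflection $z \in \bar{L}$ or $-I_2$ and $\s$ carries a product of $2$-cycles; and for semisimple $x$ of odd order $r \ne p$ the eigenvalues fall into Galois orbits of size $r$, with an associated cycle-shape $(r^\ell, 1^{m-r\ell})$ for $\s$. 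Counting the cosets $\bar{T}\s$ and the possibilities for the components $z_i$ then produces upper bounds on $i_r(N)$ of precisely the shape appearing in Lemma~\ref{l:sp0} and the proof of Theorem~\ref{t:sp} (up to the factor of at most $2$ from the index of $T$ in $\bar{T}_w^F$). For the lower bounds on $|x^G|$ I would quote the class-size estimates of \cite[Section~3]{fpr2} in terms of $\nu(x)$ (Definition~\ref{d:nu}), supplemented by the centraliser-order formulae in $\O_n(q)$ when a class must be pinned down precisely.

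With these ingredients the four cases run parallel to Theorem~\ref{t:sp}. When $r = p$, combining the centraliser-order lower bound for $|x^G|$ with the bound $|x^G \cap N| \leqs 2^{j(r-1)}\tfrac{m!}{j!(m-jr)!r^j}(q+1)^{j(r-1)}$ reduces $\a(x) \leqs \tfrac{1}{2}(1-t)$ to a bounded set of triples $(n,j,f)$ with $q = p^f$, checked directly. The semisimple-involution case ($q$ odd) is handled by taking $\nu(x) = 2\ell$, using the lower bound $|x^G| > \tfrac{1}{2a}q^{2\ell(n-2\ell)}$, and counting involutions in $N$ by summing over the size $j$ of the $2$-cycle part of $\s$, as in Case~2 of the symplectic proof; crude bounds cut the problem down to small $n$ and $q$, where the sharper sums (or the sharper $t$ from Lemma~\ref{so_odd1}) finish the job. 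The case $r > 2$, $(r,|T|) = 1$ uses the Galois-orbit form of $x$ directly. Finally, for $r > 2$ with $r \mid |T|$ one takes $\ell$ maximal such that $x$ is $\bar{G}$-conjugate into a coset $\bar{T}\pi$ with $\pi$ of cycle-shape $(r^{\ell}, 1^{m-r\ell})$: if $\ell = 0$ then $x^G \cap N \subseteq T$ and one treats small $\nu(x)$ and large $\nu(x)$ separately (using $|x^G \cap N| \leqs |T| \leqs (q+1)^m$ in the latter), while if $\ell \geqs 1$ one uses a two-term estimate for $|x^G \cap N|$ (analogous to \eqref{e:sp2} and \eqref{e:sp3}) together with the minimal class size $|x^G| > \tfrac{1}{2}\big(\tfrac{q}{q+1}\big)^{(r-1)/2} q^{\ell(r-1)(2n-2\ell r + 1)}$ to reduce to finitely many $(n,q)$, finishing with the precise sum.

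The main obstacle is the same bookkeeping that dominates Theorem~\ref{t:sp}: the generic estimates leave a residue of low-dimensional groups over small fields — essentially the $\O_n(3)$ with $n$ up to around $21$, together with a handful of $\O_n(q)$ for $n \in \{7,9,11\}$ and small $q$ — for which one must either invoke the strengthened value of $t$ from Lemma~\ref{so_odd1} or compute $|x^G|$ and $|x^G \cap N|$ essentially on the nose. A secondary issue specific to the orthogonal setting is tracking the several $\O_n(q)$-classes of semisimple involutions (distinguished by the dimension and type of the $(-1)$-eigenspace and by spinor norm) and matching the $\bar{N}$-count with the $|x^G|$ lower bound class by class; none of this alters the shape of the argument, and the conclusion is that no odd-dimensional orthogonal group appears in part~(ii) of Theorem~\ref{t:main}.
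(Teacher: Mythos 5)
Your proposal follows essentially the same route as the paper's proof: reduce the cases $(n,q)=(7,3),(9,3)$ to the computation in Lemma \ref{l:small}, use Proposition \ref{p:eta} or the sharpened constants of Lemma \ref{so_odd1} to reduce to verifying $\a(x)\leqs\tfrac{1}{2}(1-t)$, and then split by the order of $x$ exactly as in the symplectic case, bounding $|x^G\cap N|$ via normal forms in $\bar{N}=\bar{T}.W$ and $|x^G|$ via the estimates of \cite[Section 3]{fpr2}. The residual bookkeeping you flag (small $n$ and $q$, the several classes of semisimple involutions, and the $\ell=0$ versus $\ell\geqs 1$ dichotomy when $r\mid|T|$) is precisely what occupies the paper's Cases 1--3, so your plan is correct and matches the published argument.
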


\begin{proof}
The cases $(n,q) \in \{(7,3),(9,3)\}$ can be handled using {\sc Magma} (see Lemma \ref{l:small}), so we may (and will) assume $(n,q) \ne (7,3), (9,3)$ for the remainder. Set $t=1/3$, with the exception of the cases in Lemma \ref{so_odd1}, where we define $t$ as in the lemma. Let $x \in N$ be an element of prime order $r$.  As before, our aim is to show that $\a(x) \leqs (1-t)/2$. We will adopt the notation introduced at the start of Section \ref{s:o_odd}.

\vs

\noindent \emph{Case 1. $r=2$.}

\vs

Here $x$ is of the form $(-I_{2\ell},I_{n-2\ell})$ with $1 \leqs \ell \leqs m$, whence
\begin{equation}\label{e:soo}
|x^G|>\frac{1}{4}q^{2\ell(n-2\ell)}.
\end{equation}
The cases $\ell \in \{1,m-1,m\}$ require special attention. Set $z = \left(\begin{smallmatrix} 0 & 1 \\ 1 & 0 \end{smallmatrix}\right) \in {\rm O}_2(k)$.

First assume $\ell=m$. Here $x$ is $\bar{G}$-conjugate to elements in $\bar{N}$ of the form $(-I_2, \ldots, -I_2,1)$ and $(z,-I_2, \ldots, -I_2,-1)$, which implies that $|x^G \cap N| \leqs m(q+1)+1$. One can now check that the bound on $|x^G|$ in \eqref{e:soo} is sufficient. 

Next assume $\ell=1$. In this case, $x$ is $\bar{G}$-conjugate to the following elements in $\bar{N}$:
\[
(-I_2,I_2, \ldots, I_2,1),\, (z,I_2, \ldots, I_2,-1), \, (z,z,I_2, \ldots, I_2,1), \, (I_2, \ldots, I_2,1)\s, 
\]
where $\s = (1,2) \in S_m$. This implies that
\[
|x^G \cap N| \leqs m + m(q+1)+\binom{m}{2}(q+1)^2+2\binom{m}{2}(q+1)
\]
and once again the bound in \eqref{e:soo} is good enough. Similarly, if $\ell=m-1$ then $x$ is $\bar{G}$-conjugate to the following: 
\[
(I_2,-I_2, \ldots, -I_2,1),\, (z,I_2,-I_2, \ldots, -I_2,-1), \, (z,z,-I_2, \ldots, -I_2,1),
\]
\[
(z,z,z,-I_2, \ldots, -I_2,-1),\, (I_2,I_2,-I_2, \ldots, -I_2,1)\s, \, (I_2,I_2,z,-I_2, \ldots, -I_2,-1)\s
\]
where $\s=(1,2) \in S_m$. Therefore,
\begin{align*}
|x^G \cap N| \leqs  & \; m+2\binom{m}{2}(q+1) + \binom{m}{2}(q+1)^2 + \binom{m}{3}(q+1)^3 \\
& \, +2\binom{m}{2}(q+1) + 2\binom{m}{2}(m-2)(q+1)^2
\end{align*}
and it is routine to check that \eqref{e:soo} is sufficient.

To complete the analysis of involutions, we may assume $n \geqs 9$ and $2 \leqs \ell \leqs m-2$.  First observe that the combined contribution to $|x^G \cap N|$ from elements in cosets of the form $Ty$ with $y \in (S_2)^m < W$ is at most $(2(q+1))^m$. Similarly, the contribution from elements in cosets $Ty$ such that $y \in (S_2)^m\s$ and $\s \in S_m$ has cycle-shape $(2^j,1^{m-2j})$ with $1 \leqs j \leqs \min\{\ell, (n-2\ell-1)/2\}$ is at most 
\[
\frac{m!}{j!(m-2j)!2^j}(2(q+1))^j \cdot (2(q+1))^{m-2j}.
\]
Therefore, if we set $a = \min\{\ell, (n-2\ell-1)/2\}$, then
\begin{equation}\label{e:soo1}
|x^G \cap N| \leqs 2^m(q+1)^m\sum_{j=0}^a\frac{m!}{j!(m-2j)!2^j}. 
\end{equation}
Now
\[
\sum_{j=0}^a\frac{m!}{j!(m-2j)!2^j} \leqs \sum_{j=0}^a \left(\frac{m^2}{2}\right)^j < 2\left(\frac{m^2}{2}\right)^a
\]
and this implies that
\begin{equation}\label{e:soo2}
|x^G \cap N| \leqs 2^{m+1}(q+1)^m\left(\frac{m^2}{2}\right)^a.
\end{equation}
One can now check that the bounds in \eqref{e:soo2} and \eqref{e:soo} are sufficient if $n \geqs 39$ or $q \geqs 23$. And if $n \leqs 37$ and $q \leqs 19$, then the bounds in \eqref{e:soo1} and \eqref{e:soo} are good enough. 

\vs

\noindent \emph{Case 2. $r>2$, $(r,|T|)=1$.}

\vs

First assume $r=p$.  Here $x$ is $\bar{G}$-conjugate to an element of the form $(1, \ldots, 1)\s \in \bar{N}$, where $\s = (1, \ldots, 1)\rho \in W$ and $\rho \in S_m$ has cycle-shape $(r^j,1^{m-jr})$ for some $1 \leqs j \leqs \lfloor m/r \rfloor$. Then $x$ has Jordan form $(J_r^{2j},J_1^{n-2jr})$ on $V$ and we deduce that 
\begin{equation}\label{e:sounip}
|x^G \cap N| \leqs \frac{m!}{j!(m-jr)!r^{j}}\left(2(q+1)\right)^{j(r-1)}.
\end{equation}
In addition, we have
\[
|x^G|>\frac{1}{8}q^{j(r-1)(2n-2jr-1)}
\]
and one can check that these bounds yield $\a(x) \leqs (1-t)/2$. 

Similarly, if $r \ne p$ then $x$ is $\bar{G}$-conjugate to $(I_{n-2j(r-1)},\omega I_{2j}, \ldots, \omega^{r-1}I_{2j})$ for some integer $j$ in the range $1 \leqs j \leqs \lfloor m/r \rfloor$, so 
\[
|x^G|>\frac{1}{2}\left(\frac{q}{q+1}\right)^{(r-1)/2}q^{j(r-1)(2n-2jr-1)}
\]
and the upper bound on $|x^G \cap N|$ in \eqref{e:sounip} is satisfied. Once again, it is straightforward to show that these bounds are sufficient.

\vs

\noindent \emph{Case 3. $r>2$, $r$ divides $|T|$.}

\vs

Here we proceed as in Case 4 in the proof of Theorem \ref{t:sp}. Let $0 \leqs \ell \leqs \lfloor m/r\rfloor$ be maximal such that $x$ is $\bar{G}$-conjugate to an element in a coset $\bar{T}\pi$, where $\pi = (1,\ldots, 1)\s \in W$ and $\s \in S_m$ has cycle-shape $(r^{\ell},1^{m-r\ell})$. Set $s = \nu(x)$.

Suppose $\ell=0$, so $x^G \cap N \subseteq T$. If $s=2$ then $x = (I_{n-2},\omega,\omega^{-1})$ up to $\bar{G}$-conjugacy and one can check that the bounds $|x^G \cap N| \leqs 2m$ and $|x^G|>\frac{1}{2}(q+1)^{-1}q^{2n-3}$ are sufficient. Now assume $s \geqs 4$. If $n=7$ then $|x^G|>\frac{1}{2}(q+1)^{-1}q^{13}$ (minimal if $x = (I_1,\omega I_3, \omega^{-1}I_3)$) and the trivial bound $|x^G \cap N| \leqs (q+1)^3$ yields $\a(x) \leqs 1/3$. For $n \geqs 9$ we have $|x^G|>\frac{1}{2}(q+1)^{-1}q^{4n-13}$ and once again the bound $|x^G \cap N| \leqs (q+1)^m$ is good enough.

Finally, suppose $\ell \geqs 1$. Here $r \leqs m$ and each $r$-th root of unity has multiplicity at least $2\ell$ as an eigenvalue of $x$ on $\bar{V}$ (in particular, the $1$-eigenspace of $x$ is at least $3$-dimensional). Since $|x^G|$ is minimal when $x = (I_{n-2\ell(r-1)}, \omega I_{2\ell}, \ldots, \omega^{r-1}I_{2\ell})$, it follows that
\[
|x^G|>\frac{1}{2}\left(\frac{q}{q+1}\right)^{(r-1)/2}q^{\ell(r-1)(2n-2\ell r-1)}
\]
and the upper bounds on $|x^G \cap N|$ in \eqref{e:sp2} and \eqref{e:sp3} are satisfied. One can check that the bound in \eqref{e:sp3} is sufficient if $n \geqs 13$ or $q \geqs 23$; in each of the remaining cases, we can evaluate the bound in \eqref{e:sp2} and the result follows.
\end{proof}

\section{Even dimensional orthogonal groups}\label{s:o_even}

Here we complete the proof of Theorem \ref{t:main} for classical groups by handling the even dimensional orthogonal groups $G = {\rm P\O}_n^{\e}(q)$, where $n = 2m \geqs 8$ and $\e = \pm$.

Write $\bar{N} = \bar{T}.W$, where $\bar{T}$ is the image (modulo scalars) of the diagonal matrices in ${\rm SO}_n(k)$ of the form ${\rm diag}(\l_1,\l_1^{-1}, \ldots, \l_m,\l_m^{-1})$ with respect to a standard basis $\{e_1, f_1, \ldots,  e_m,f_m\}$ for the natural module $\bar{V}$. Set 
\[
\bar{L} = \la {\rm diag}(\l,\l^{-1}), z \,:\, \l \in k^{\times}\ra = {\rm O}_2(k),
\]
where $z = \left(\begin{smallmatrix} 0 & 1 \\ 1 & 0 \end{smallmatrix}\right)$. Then $\bar{N}$ is the image in $\bar{G}$ of the subgroup 
\[
\la \bar{T}, (z,z,1, \ldots, 1),S_m \ra < \bar{L} \wr S_m
\]
and thus $W = 2^{m-1}{:}S_m$ is an index-two subgroup of $S_2 \wr S_m$.  
We will use the notation $(z_1, \ldots, z_m)\s$ to denote a general element of $\bar{N}$, where $z_i \in \bar{L}$ and $\s \in S_m$. Without loss of generality, we may assume $\bar{T}$ is $F$-stable.

Recall that there is a bijection from the set of $F$-classes in $W$ to the set of $\bar{G}^F$-classes of $F$-stable maximal tori in $\bar{G}$. First assume $\e=+$. Here the $F$-classes in $W$ coincide with the usual conjugacy classes in $W$, which can be parameterised by pairs of partitions $(\l,\mu)$, where $|\l|+|\mu|=m$ and the number of parts in $\mu$ is even, with the additional condition that if every part of $\l$ is even and $\mu$ is the empty partition, then there are two $W$-classes corresponding to $(\l,\mu)$. Similarly, if $\e=-$ and $m \geqs 5$ is odd, then the $F$-classes in $W$ are in bijection with the usual classes and they have essentially the same parameterisation, the only difference being that each partition $\mu$ in the pair $(\l,\mu)$ should have an odd number of parts (in particular, $\mu$ is non-empty). On the other hand, if $\e = -$ and $m \geqs 4$ is even, then there is a distinction to be made between the $F$-classes in $W$ and the usual conjugacy classes (see Remark \ref{r:minus} for more details). But in any case, the $F$-classes are still parameterised by pairs of partitions $(\l,\mu)$ as above, where $\mu$ has an odd number of parts (for example, if $m=4$ then $W$ has $13$ conjugacy classes, but we find that there are only $9$ distinct $F$-classes when $\e=-$).

So in all cases we may associate the $F$-class of $w \in W$ with a pair of partitions $(\l,\mu)$ as described above, according to the type of $F$. Write 
\begin{equation}\label{e:part}
\l = (m^{a_m}, \ldots, 1^{a_1}), \, \mu = (m^{b_m}, \ldots, 1^{b_1})
\end{equation}
and let $\bar{T}_w$ be the corresponding $F$-stable maximal torus of $\bar{G}$. We can then define $N = N_G(\bar{T}_w) = T.R$, where $R = C_{W,F}(w)$ (see \eqref{e:CW}) and $T$ is the image (modulo scalars) of $\what{T} \cap \O_{n}^{\e}(q)$, where
\[
\what{T} = \left(C_{q^{m}-1}\right)^{a_m} \times \cdots \times \left(C_{q-1}\right)^{a_1} \times \left(C_{q^{m}+1}\right)^{b_m} \times \cdots \times \left(C_{q+1}\right)^{b_1}
\]
is a maximal torus of ${\rm SO}_n^{\e}(q)$. Note that $C_{W,F}(w) \cong C_W(w)$ if $\e=+$, or if $\e=-$ and $m \geqs 5$ is odd. See \cite[Theorems 5-7]{BuG} for the precise cyclic structure of $T$.

\begin{rem}\label{r:minus}
Suppose $G = {\rm P\O}_{2m}^{-}(q)$ with $m \geqs 4$ even and set $N = T.R$ as above with respect to the pair of partitions $(\l,\mu)$ in \eqref{e:part}. Let $w \in W$ be a representative of the corresponding $F$-class in $W$. We can identify the $F$-centraliser $R = C_{W,F}(w)$ with an index-two subgroup of $C_{W_0}(w)$, where we view $W$ as an index-two subgroup of the Weyl group $W_0 = S_2 \wr S_m$ of ${\rm SO}_{2m+1}(k)$. Now $C_{W_0}(w) = A \times B$, where 
\begin{align*}
A & = ((C_{2} \times C_{m}) \wr S_{a_m}) \times ((C_{2} \times C_{m-1}) \wr S_{a_{m-1}}) \times \cdots \times (C_2 \wr S_{a_1}) \\
B & = (C_{2m} \wr S_{b_m}) \times (C_{2(m-1)} \wr S_{b_{m-1}}) \times \cdots \times (C_2 \wr S_{b_1}) 
\end{align*}
and so it is straightforward to compute $|R|$. For example, if $G = \O_{8}^{-}(2)$ then each $F$-class in $W$ corresponds to one of the pairs $(\l,\mu)$ in Table \ref{tab:mu}. In the table, we also describe the structure of $T$ (see \cite[Theorem 7]{BuG}) and $C_{W_0}(w)$, and we compute $|N|$.
\end{rem}

\begin{table}
\[
\begin{array}{ccllcc} \hline
\l & \mu & T & C_{W_0}(w) & |R| & |N| \\ \hline
(3) & (1) & C_{21} & C_6 \times C_2 & 6 & 126\\
(2,1) & (1) & (C_3)^2 & (C_2)^4 & 8 & 72 \\
(1^3) & (1) & C_3 & (C_2 \wr S_3) \times C_2 & 48 & 144 \\
(2) & (2) & C_{15} & C_4 \times (C_2)^2 & 8 & 120 \\
(1^2) & (2) & C_5 & D_8 \times C_4 & 16 & 80 \\
(1) & (3) & C_9 & C_6 \times C_2 & 6 & 54 \\
(1) & (1^3) & (C_3)^3 & (C_2 \wr S_3) \times C_2 & 48 & 1296 \\
\emptyset & (4) & C_{17} & C_8 & 4 & 68 \\
\emptyset & (2,1^2) & C_{15} \times C_3 & D_8 \times C_4 & 16 & 720 \\ \hline
\end{array}
\]
\caption{The case $G = \O_{8}^{-}(2)$}
\label{tab:mu}
\end{table}
 
\begin{lem}\label{so_even1}
Let $G = {\rm P\O}_n^{\e}(q)$, where $n \geqs 8$ is even.
\begin{itemize}\addtolength{\itemsep}{0.2\baselineskip}
\item[{\rm (i)}] If $n=8$ and $q \leqs 16$ then $\eta_G(t)<1$, where $t$ is defined as follows: 
\[
\begin{array}{r|cccccccccc}
q  & 2 & 3 & 4 & 5 & 7 & 8 & 9 & 11 & 13 & 16 \\ \hline
t & 7/25 & 11/50 & 9/50 & 4/25 & 7/50 & 13/100 & 13/100 & 7/50 & 7/50 & 3/20
\end{array}
\]
\item[{\rm (ii)}] If $(n,q)$ is one of the following, then $\eta_G(t)<1$ where $t$ is defined as in the table: 
\[
\begin{array}{r|cccccc}
  & n = 10 & 12 & 14 & 16 & 18 & 20 \\ \hline
q=2 & 9/50 & 7/50 & 11/100 & 1/10 & 2/25 & 7/100 \\
3 & 13/100 & 11/100 & 9/100 & 2/25 & 7/100 & 3/50 \\
4 & 3/25 & 9/100 & & & & \\
5 & 11/100 & 9/100 & & & & \\
7 & 1/10 & 7/100 & & & & 
\end{array}
\]
\end{itemize}
\end{lem}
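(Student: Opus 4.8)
This is a routine {\sc Magma} \cite{Magma} computation, carried out exactly as in the proofs of Lemmas \ref{l:lu}, \ref{l:sp1} and \ref{so_odd1}. The plan is to produce, for each pair $(n,q)$ in the statement, the complete list $a_1, \ldots, a_m$ of sizes of the conjugacy classes of $G = {\rm P\O}_n^{\e}(q)$ comprising elements of prime order, and then to verify directly that $\sum_{i=1}^m a_i^{-t}<1$ for the stated value of $t$.

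To obtain this list, I would work with the standard matrix representation of $L = \O_n^{\e}(q)$ over $\mathbb{F}_q$ and apply the function \texttt{ClassicalClasses} to record the sizes of all conjugacy classes of elements of prime order $r$ in $L$. Write $Z = Z(L)$, so $G = L/Z$ and $|Z| \leqs 2$. If $r$ is coprime to $|Z|$ --- in particular whenever $q$ is even, where $Z = 1$ --- then the $L$-classes of elements of order $r$ are in size-preserving bijection with the corresponding $G$-classes. The only case requiring care is $r = 2$ with $q$ odd and $|Z| = 2$: here I would first extract the sizes of the $L$-classes $x^L$ with $x \in L \setminus Z$ and $x^2 \in Z$, and for each such class, letting $s$ denote the multiplicity of $|x^L|$ in this list, record $s/2$ classes of involutions in $G$ of size $|x^L|$ if $s$ is even, and $s$ classes of size $|x^L|/2$ otherwise (this is the same bookkeeping used in the proof of Lemma \ref{l:sp1}; see \cite[Table B.5]{BG}).

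With the list $a_1, \ldots, a_m$ in hand it is then immediate to evaluate $\eta_G(t) = \sum_{i=1}^m a_i^{-t}$ and check that it is strictly less than $1$ for the value of $t$ given in the tables. The only real obstacle is the size of the computation in the larger cases, notably $G = {\rm P\O}_8^{\e}(16)$ in part (i) and $G = {\rm P\O}_{20}^{\pm}(2)$ in part (ii); but the conjugacy class data for $\O_n^{\e}(q)$ is readily available via \texttt{ClassicalClasses} throughout the stated ranges, and no difficulties arise.
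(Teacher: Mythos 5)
Your proposal matches the paper's proof: the authors likewise run \texttt{ClassicalClasses} on the matrix group $L = \O_n^{\e}(q)$ and make the same adjustment for involutions when $q$ is odd and $Z(L)$ is nontrivial (citing \cite[Tables B.10, B.11]{BG} for the resulting involution class sizes), before verifying $\sum_i a_i^{-t}<1$ directly. The approach and the bookkeeping are essentially identical, so no further comment is needed.
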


\begin{proof}
This is very similar to the proof of Lemma \ref{l:sp1}. With the aid of {\sc Magma}, we work with the standard matrix representation of $L = \O_n^{\e}(q)$ and we use the function \texttt{ClassicalClasses} to compute the relevant class lengths in $L$. In this way, with an appropriate adjustment for involutions when $q$ is odd and $Z(L)$ is nontrivial, we obtain the list of class sizes of elements of prime order in $G$. Note that the sizes of the classes of involutions in $G$ can be read off from \cite[Tables B.10, B.11]{BG} when $q$ is odd.
\end{proof}

\begin{thm}\label{t:so_even}
Suppose $G = {\rm P\O}_{n}^{\e}(q)$ and $n \geqs 8$ is even. Then $b(G,N) \leqs 3$, with equality if and only if $G = \O_{8}^{+}(2)$ and $N = 3^4{:}(2^3{:}S_4)$, in which case $|N \cap N^x| = 4$ for some $x \in G$.
\end{thm}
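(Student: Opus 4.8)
The plan is to follow the template established in the proofs of Theorems \ref{t:sp} and \ref{t:so_odd}. First I would invoke Lemma \ref{l:small} to exclude the groups $\O_8^{\e}(2)$, ${\rm P\O}_8^{\e}(3)$ and $\O_{10}^{\e}(2)$; in particular the genuine exception $G = \O_8^+(2)$ with $N = 3^4{:}(2^3{:}S_4)$ is already settled there, with $b(G,N) = 3$ and $|N \cap N^x| = 4$ for a suitable $x \in G$, so it remains to prove $b(G,N) = 2$ for all the other groups. For these I would set $t = 1/3$, with the exception of the low-dimensional groups over small fields in Lemma \ref{so_even1}, where $t < 1/3$ is defined as in that lemma. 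By Propositions \ref{p:base} and \ref{p:eta} (or Lemma \ref{so_even1}), it then suffices to show that $\a(x) \leqs \tfrac{1}{2}(1-t)$ for every $x \in N$ of prime order, since this gives $\widehat{\mathcal{Q}}(G,N,2) \leqs \eta_G(t) < 1$. Throughout, $|x^G \cap N|$ is to be bounded using $\bar{N} = \bar{T}.W$ with $W = 2^{m-1}{:}S_m$ an index-two subgroup of $S_2 \wr S_m$, and $|x^G|$ bounded below in terms of the Jordan form of $x$ and the parameter $\nu(x)$ via \cite[Section 3]{fpr2}, with the care needed (as in Remark \ref{r:disc}) when $r$ divides $(n,q-\e)$ and $C_{\bar{G}}(x)$ is disconnected. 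I would then split the argument into cases according to the prime $r$.

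For $r = 2$ and $q$ odd, $x$ either lifts to an element of order $4$ in $\O_n^{\e}(q)$, which I would treat via the trivial bound $|x^G \cap N| \leqs |N| \leqs (q+1)^m|W|$ for $n,q$ large and via the refinement $|x^G \cap N| \leqs i_2(N)$ otherwise, or else $x$ is $\bar{G}$-conjugate to $(-I_{2\ell},I_{n-2\ell})$ for some $1 \leqs \ell \leqs m$; in the latter case I would sum the contributions to $|x^G \cap N|$ over the cosets $Ty$ with $y \in (S_2)^m \cap W$ and with $y \in (S_2)^m\s \cap W$ for $\s$ of cycle-shape $(2^j,1^{m-2j})$, obtaining estimates of the shape \eqref{e:soo1}--\eqref{e:soo2}, to be paired with the class-size lower bounds of \cite{fpr2}. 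When $p = 2$, $x$ is a unipotent involution of type $a_h$, $b_h$ or $c_h$ in the notation of \cite{AS}; using that every involution in $\bar{L} = {\rm O}_2(k)$ is a reflection, I would identify which elements of $\bar{N}$ realise each type, bound $|x^G \cap N|$ by an expression analogous to \eqref{e:sym1}, and combine this with the lower bounds $|x^G| > \tfrac{1}{2}q^{h(n-h)}$ or $\tfrac{1}{2}q^{h(n-h+1)}$.

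For odd $r$ with $(r,|T|) = 1$, the element $x$ is $\bar{G}$-conjugate to $(1,\ldots,1)\s$ with $\s$ of cycle-shape $(r^j,1^{m-jr})$, hence has Jordan form $(J_r^{2j},J_1^{n-2jr})$ if $r = p$ and $(I_{n-2j(r-1)},\omega I_{2j},\ldots,\omega^{r-1}I_{2j})$ otherwise; here the bound $|x^G \cap N| \leqs \tfrac{m!}{j!(m-jr)!r^j}(2(q+1))^{j(r-1)}$ together with the standard lower bound on $|x^G|$ will suffice. The main subcase is $r \mid |T|$, where I would take $\ell$ maximal such that $x$ is $\bar{G}$-conjugate to an element of $\bar{T}\pi$ with $\pi$ of cycle-shape $(r^{\ell},1^{m-r\ell})$: the case $\ell = 0$ has $x^G \cap N \subseteq T$, so $|x^G \cap N| \leqs (q+1)^m$, and is dispatched by splitting on $\nu(x) \in \{2\}$ and $\nu(x) \geqs 4$ as in Case 3 of the proof of Theorem \ref{t:so_odd}, while for $\ell \geqs 1$ one derives estimates as in \eqref{e:sp2}--\eqref{e:sp3}, reducing to a short explicit list of small tuples $(n,q,r,\ell)$ to be checked with the more accurate bounds.

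The main obstacle, beyond the length of the case analysis, will be the bookkeeping for $\e = -$ with $m$ even, where $R = C_{W,F}(w)$ is only an index-two subgroup of $C_W(w)$ and has to be read off from the decomposition $C_{W_0}(w) = A \times B$ of Remark \ref{r:minus}; one must check that the bounds on $|x^G \cap N|$ obtained by working inside the ambient $S_2 \wr S_m$ remain valid (in fact they improve by the index-two factor). A secondary difficulty is the precise shape of the lower bounds on $|x^G|$ for elements with disconnected centraliser or nontrivial spinor norm, handled as in Remark \ref{r:disc}, and finally a handful of residual low-dimensional groups over small fields, where the crude estimates narrowly fail and one substitutes the exact values of $|x^G|$ and $|x^G \cap N|$.
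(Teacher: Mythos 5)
Your plan follows the paper's proof essentially verbatim: the same exclusions of $(n,q)\in\{(8,2),(8,3),(10,2)\}$ via Lemma \ref{l:small} (with the exception $\O_8^+(2)$, $N=3^4{:}(2^3{:}S_4)$ settled there), the same choice of $t$ via Lemma \ref{so_even1}, and the same case division by $r$ with the bounds of the shape \eqref{e:soo1}--\eqref{e:soo2} and \eqref{e:sp2}--\eqref{e:sp3}. The only slip is that for $p=2$ the unipotent involutions in $\Omega_n^{\e}(q)$ with $n$ even are of type $a_h$ or $c_h$ with $h$ even only, so the $b_h$ case you list does not arise; otherwise the approach coincides with the paper's.
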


\begin{proof}
In view of Lemma \ref{l:small} we may assume $(n,q) \not\in \{ (8,2), (8,3), (10,2)\}$. Let $x \in N$ be an element of prime order $r$. As before, we seek to establish the bound $\a(x) \leqs (1-t)/2$, where either $t = 1/3$, or $G$ is one of the groups in Lemma \ref{so_even1} and we define $t$ as in the lemma. There are several cases to consider.

\vs

\noindent \emph{Case 1. $r=p=2$.}

\vs

Here $x$ has Jordan form $(J_2^h,J_1^{n-2h})$ on $V$ for some even integer $h = 2\ell \geqs 2$, whence $x$ is of type $a_h$ or $c_h$ with respect to the notation in \cite{AS}. Note that every involution in $\bar{L}^m \cap \bar{G}$ is of type $c$. 

If $x = a_h$ then $x$ is $\bar{G}$-conjugate to $(1, \ldots, 1)\s \in \bar{N}$ with $\s = (1,2) \cdots (h-1,h) \in S_m$, so 
\[
|x^G \cap N| \leqs 2^{\ell}\frac{m!}{\ell!(m-2\ell)!2^{\ell}}(q+1)^{\ell}
\]
and it is straightforward to check that the bound $|x^G|>\frac{1}{2}q^{h(n-h-1)}$ is sufficient. 

Now suppose $x = c_h$. If $h=2$ then $|x^G|>\frac{1}{2}q^{2(n-2)}$ and we observe that $x$ is $\bar{G}$-conjugate to $(z,z,1, \ldots, 1) \in \bar{N}$, which implies that $|x^G \cap N| \leqs \binom{m}{2}(q+1)^2$. These bounds are sufficient. Now assume $h \geqs 4$. Here there exists an integer $j$ in the range $0 \leqs j <h/2$ such that $x$ is $\bar{G}$-conjugate to an element in $\bar{N}$ of the form $(z_1, \ldots, z_m)\s$, where $\s = (1,2) \cdots (2j-1,2j) \in S_m$ and $z_i$ is nontrivial (and equal to $z$) if and only if $2j+1 \leqs i \leqs h$. Therefore,
\begin{equation}\label{e:oo1}
|x^G \cap N| \leqs \sum_{j=0}^{h/2-1} 2^j\frac{m!}{j!(m-2j)!2^j}(q+1)^j\binom{m-2j}{h-2j}(q+1)^{h-2j}
\end{equation} 
and we deduce that
\begin{equation}\label{e:oo2}
|x^G \cap N| \leqs \frac{m!}{(m-h)!}(q+1)^h\sum_{j=0}^{h/2-1} \frac{1}{j!(h-2j)!3^j} \leqs \frac{m!}{(m-h)!4}(q+1)^h.
\end{equation} 
Since $|x^G|>\frac{1}{2}q^{h(n-h)}$, one can check that the bound in \eqref{e:oo2} is sufficient unless $(n,q) = (12,2)$. In this case, we can evaluate the bound in \eqref{e:oo1} and the result follows.

\vs

\noindent \emph{Case 2. $r=2$, $p \ne 2$.}

\vs

Now assume $x \in N$ is a semisimple involution. We begin by considering the special case where $x$ lifts to an element of order $4$ in $\O_n^{\e}(q)$ (that is, $x = Z\hat{x}$ where $Z = Z(\O_n^{\e}(q))$, $\hat{x} \in \O_n^{\e}(q)$ and $\hat{x}^2 = -I_n$). Here 
\[
|x^G| >\frac{1}{4}\left(\frac{q}{q+1}\right)q^{n(n-2)/4}
\]
and one can check that the trivial bound $|x^G \cap N| \leqs |N| \leqs (q+1)^m|W|$ is sufficient unless $n \in \{8,10\}$, or $n \in \{12,14,16\}$ and $q \leqs 13$. To handle the remaining cases, we can work with the more accurate estimate
\[
|x^G \cap N| \leqs \sum_{j=0}^{\lfloor m/2 \rfloor}2^{j}(q+1)^j \cdot 2^{m-2j}\frac{m!}{j!(m-2j)!2^j},
\]
which is obtained by carefully considering the relevant elements of order $4$ in the normaliser of a maximal torus of ${\rm SO}_{n}(k)$. Indeed, working modulo scalars, we observe that $x$ is $\bar{G}$-conjugate to an element of the form $(z_1, \ldots, z_m)\pi \in \bar{N}$, where $\pi = (y_1, \ldots, y_m)\s \in W$, $\s = (1,2) \cdots (2j-1,2j) \in S_m$ for some $0 \leqs j \leqs \lfloor m/2 \rfloor$, $z_i = {\rm diag}(\l_i,\l_i^{-1})$ has order $4$ for all $i$, $y_i = 1$ if $i > 2j$ and $y_{2\ell-1}=y_{2\ell} \in \{1,z\}$ if $1 \leqs \ell \leqs j$. By combining this with the above lower bound on $|x^G|$, we deduce that $\a(x) \leqs (1-t)/2$ as required.

To complete the analysis of involutions, we may assume $x$ is $\bar{G}$-conjugate to an element of the form $(-I_{2\ell},I_{n-2\ell})$, where $1\leqs \ell \leqs \lfloor m/2 \rfloor$. First observe that
\begin{equation}\label{e:so7}
|x^G| >\frac{1}{4d}\left(\frac{q}{q+1}\right)q^{2\ell(n-2\ell)},
\end{equation}
where $d = 2$ if $\ell = m/2$, otherwise $d=1$. The cases with $\ell \in \{1,2\}$ require special attention.

Suppose $\ell=1$. Then $x$ is $\bar{G}$-conjugate to the following elements in $\bar{N}$:
\[
(-I_2,I_2, \ldots, I_2),\, (z,z,I_2, \ldots, I_2), \, (I_2, \ldots, I_2)\s,
\]
where $\s = (1,2) \in S_m$. This implies that
\[
|x^G \cap N| \leqs m+\binom{m}{2}(q+1)^2 + \binom{m}{2}2(q+1)
\]
and one can check that the bound in \eqref{e:so7} is sufficient.

Now assume $\ell=2$. Here $x$ is $\bar{G}$-conjugate to the following elements in $\bar{N}$: 
\[
(-I_2, -I_2, I_2, \ldots, I_2), \, (-I_2, z,z, I_2, \ldots, I_2), \, (z,z,z,z,I_2, \ldots, I_2),
\]
\[
(I_2,I_2,-I_2,I_2, \ldots, I_2)\s, \, (I_2,I_2,z,z,I_2, \ldots, I_2)\s, \, (I_2, \ldots, I_2)\rho,
\]
where $\s = (1,2) \in S_m$ and $\rho= (1,2)(3,4) \in S_m$. As a consequence, we deduce that
\begin{align*}
|x^G \cap N| \leqs & \, \binom{m}{2} + m\binom{m-1}{2}(q+1)^2 + \binom{m}{4}(q+1)^4 + \binom{m}{2}2(q+1)(m-2) \\
 & \, +  \binom{m}{2}2(q+1)\binom{m-2}{2}(q+1)^2 +  \frac{m!}{2!(m-4)!2^2}(2(q+1))^2
\end{align*}
and once again the result follows via the bound in \eqref{e:so7}.

Finally, let us assume $3 \leqs \ell \leqs \lfloor m/2 \rfloor$ and note that $m \geqs 6$. Here we can repeat the argument in the final paragraph of Case 1 in the proof of Theorem \ref{t:so_odd} to show that \eqref{e:soo1} and \eqref{e:soo2} hold (with $a=\ell$). 
It is straightforward to check that \eqref{e:soo2} is sufficient if $n \geqs 24$ or $q \geqs 13$, so we may assume $12 \leqs n \leqs 22$ and $q \leqs 11$. In these cases, we can evaluate the upper bound on $|x^G \cap N|$ in \eqref{e:soo1}, which is sufficient unless $(n,q) = (12,3)$. In the latter case, one can check that the trivial upper bound $|x^G \cap N| \leqs (q+1)^{m}(1+i_2(W))$ is good enough.

\vs

\noindent \emph{Case 3. $r>2$.}

\vs

If $r$ does not divide $|T|$ then we can proceed as in Case 2 in the proof of Theorem \ref{t:so_odd}; the argument goes through essentially unchanged and we omit the details. 

Finally, let us assume $r$ is an odd prime divisor of $|T|$. Here the analysis is  similar to Case 3 in the proof of Theorem \ref{t:so_odd}, but one or two cases require special attention and so we give the details. 

Let $0 \leqs \ell \leqs \lfloor m/r\rfloor$ be maximal such that $x$ is $\bar{G}$-conjugate to an element in a coset $\bar{T}\pi$, where $\pi = (1,\ldots, 1)\s \in W$ and $\s \in S_m$ has cycle-shape $(r^{\ell},1^{m-r\ell})$. Write $s = \nu(x)$.

First assume $\ell=0$, so $x^G \cap N \subseteq T$. If $s=2$ then $x$ is $\bar{G}$-conjugate to $(I_{n-2},\omega,\omega^{-1})$ and the bounds $|x^G \cap N| \leqs 2m$ and $|x^G|>\frac{1}{2}(q+1)^{-1}q^{2n-3}$ are sufficient. Now assume $s \geqs 4$. If $n \geqs 10$ then $|x^G|>\frac{1}{2}(q+1)^{-1}q^{4n-15}$ and the trivial bound $|x^G \cap N| \leqs (q+1)^m$ is good enough. If $n = 8$ and $x$ is not of the form $(\omega I_4, \omega^{-1}I_4)$, then $|x^G|>\frac{1}{2}(q+1)^{-1}q^{19}$ and the result follows since $|x^G \cap N| \leqs (q+1)^4$. On other hand, if $n=8$ and $x$ is conjugate to $(\omega I_4, \omega^{-1}I_4)$, then $|x^G|>\frac{1}{2}(q+1)^{-1}q^{13}$ and we note that $|x^G \cap N| \leqs 2^4$, which yields $\a(x) \leqs (1-t)/2$ as required.

For the remainder, let us assume $\ell \geqs 1$, so $r \leqs m$ and each $r$-th root of unity has multiplicity at least $2\ell$ as an eigenvalue of $x$ on $\bar{V}$. The upper bounds on $|x^G \cap N|$ in \eqref{e:sp2} and \eqref{e:sp3} are satisfied and we note that $|x^G|$ is minimal when $x = (I_{n-2\ell(r-1)}, \omega I_{2\ell}, \ldots, \omega^{r-1}I_{2\ell})$, which implies that 
\[
|x^G|>\frac{1}{2}\left(\frac{q}{q+1}\right)^{(r+1)/2}q^{\ell(r-1)(2n-2\ell r-1)}.
\]
For $q \geqs 3$, one can check that the bound in \eqref{e:sp3} is sufficient if $n \geqs 18$ or $q \geqs 13$, while \eqref{e:sp2} is effective in each of the remaining cases. Similarly, if $q=2$ then the same bounds are sufficient unless $n \leqs 28$, $r=3$ and $\ell=1$. So to complete the proof, we may assume the latter conditions are satisfied, in which case $x$ is of the form $(I_{n-4},\omega I_2, \omega^{-1}I_2)$ or 
$(I_{n-6},\omega I_3, \omega^{-1}I_3)$. In the latter case, $|x^G|>\frac{1}{2}(q+1)^{-1}q^{6n-29}$ and the upper bound on $|x^G \cap N|$ in \eqref{e:sp2} is sufficient. Finally, if $x = (I_{n-4},\omega I_2, \omega^{-1}I_2)$ then
\[
|x^G \cap N| \leqs 2^2\binom{m}{2}+\frac{m!}{(m-3)!3}(2(q+1))^2,\;\; |x^G|>\frac{1}{2}\left(\frac{q}{q+1}\right)q^{4n-14}
\]
and the result follows.
\end{proof}

\vs

This completes the proof of Theorem \ref{t:main} for classical groups.

\section{Exceptional groups}\label{s:excep}

In this final section we complete the proof of Theorem \ref{t:main} by handling the exceptional groups. Let $G = O^{p'}(\bar{G}^F)$ be a simple exceptional group of Lie type over $\mathbb{F}_q$, where $q=p^f$ and $p$ is a prime. Note that $G_2(2)' \cong {\rm U}_3(3)$ and ${}^2G_2(3)' \cong {\rm L}_2(8)$, so we may assume $q \geqs 3,27$ if $G = G_2(q), {}^2G_2(q)$, respectively. We adopt the notation from Section \ref{ss:tori}. In particular we have $N = N_G(\bar{T}_w) = T.R$, where $T = G \cap \bar{T}_w^F$ and $R = C_{W,F}(w)$ for some element $w$ in the Weyl group $W = N_{\bar{G}}(\bar{T})/\bar{T}$, where $\bar{T}$ is an $F$-stable maximal torus of $\bar{G}$. Our aim is to show that $b(G,N) = 2$ in every case.

We begin by recalling the following result, which handles the special cases where $N$ is a maximal subgroup of $G$. Note that these cases are recorded in \cite[Table 5.2]{LSS}.

\begin{prop}\label{p:Nmax}
If $N$ is a maximal subgroup of $G$, then $b(G,N) = 2$.
\end{prop}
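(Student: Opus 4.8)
The plan is to deduce this directly from the classification of maximal subgroups of finite exceptional groups of Lie type together with earlier work on base sizes for maximal torus normalisers. First I would recall that the pairs $(G,N)$ with $G$ simple exceptional and $N = N_G(\bar{T}_w)$ a maximal subgroup are completely classified: as noted just above the statement, they are recorded in \cite[Table 5.2]{LSS}. Only a small number of families arise here, corresponding to certain special elements $w \in W$ (for instance those whose $F$-centraliser is large, such as Coxeter-type elements), so the proposition concerns a concrete and relatively short list of groups $G$ and subgroups $N$ — e.g. torus normalisers in $G_2(q)$, $^3D_4(q)$, $^2F_4(q)$, $E_7(q)$ and $E_8(q)$ of the shape $(q\mp 1)^{\,r}.W$ and similar.

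For every such pair the equality $b(G,N) = 2$ is already established in \cite[Proposition 4.2(i)]{BTh_ep}: that result is stated precisely for almost simple exceptional groups $G$ with point stabiliser $N = N_G(\bar{T})$ a maximal subgroup, and our simple group $G$ is in particular almost simple. Hence the proposition follows at once, provided one checks that the maximal subgroups of $G$ of the form $N_G(\bar{T}_w)$ are exactly the subgroups handled in \cite{BTh_ep}; this comparison is immediate from \cite[Table 5.2]{LSS}, and it is the same reduction already used in the proof of Lemma \ref{l:excepcomp}.

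The only genuine work is therefore the bookkeeping needed to line up the list of maximal torus normalisers between the two sources and to confirm that none has been overlooked; the base-size computation itself is entirely contained in \cite[Proposition 4.2(i)]{BTh_ep}, where it is carried out via the probabilistic method of Section \ref{ss:prob} — bounding $\widehat{\mathcal{Q}}(G,N,2)$ by a sum of fixed point ratio estimates over classes of prime order elements and verifying that this quantity is strictly less than $1$, with the handful of small groups treated computationally. I do not anticipate any real obstacle beyond this cross-referencing, so the proof should be essentially a citation.
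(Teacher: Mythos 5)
Your proposal is correct and matches the paper exactly: the paper's proof of Proposition \ref{p:Nmax} is the single citation of \cite[Proposition 4.2]{BTh_ep}, with the relevant maximal subgroups listed in \cite[Table 5.2]{LSS} as you note. The additional cross-referencing discussion is fine but not needed beyond the citation itself.
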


\begin{proof}
This is \cite[Proposition 4.2]{BTh_ep}.
\end{proof}

\begin{cor}\label{c:2b2}
If $G = {}^2B_2(q)$, then $b(G,N) = 2$.
\end{cor}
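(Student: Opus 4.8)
The plan is to reduce immediately to Proposition \ref{p:Nmax} by observing that for the Suzuki groups every torus normaliser is already a maximal subgroup. First I would record the relevant structural facts. Write $q = 2^{2m+1}$ with $m \geqs 1$, so that $G = {}^2B_2(q)$ is simple. Since $p=2$, the relevant simply connected group (of type $B_2$, i.e.\ ${\rm Sp}_4$) has trivial centre, so $d = |\bar{G}^F:G| = 1$; hence $G = \bar{G}^F$ and $N = N_G(\bar{T}_w) = N_{\bar{G}^F}(\bar{T}_w)$ with no central quotient to take. There are exactly three $G$-classes of $F$-stable maximal tori $\bar{T}_w$, and the corresponding subgroups $T = \bar{T}_w^F$ are cyclic of orders $q-1$, $q+\sqrt{2q}+1$ and $q-\sqrt{2q}+1$ (note $\sqrt{2q} = 2^{m+1} \in \Z$), with normalisers in $G$ given by
\[
N = D_{2(q-1)}, \quad (q+\sqrt{2q}+1){:}4, \quad (q-\sqrt{2q}+1){:}4
\]
respectively.

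Next I would invoke the classification of the maximal subgroups of ${}^2B_2(q)$: apart from the Borel subgroup and the subfield subgroups ${}^2B_2(q_0)$, the only maximal subgroups are precisely the three torus normalisers listed above (these are exactly the cases recorded in \cite[Table 5.2]{LSS}). Consequently $N$ is a maximal subgroup of $G$ in every case, and Proposition \ref{p:Nmax} yields $b(G,N) = 2$. There is essentially no obstacle here; the one ingredient that is not formal is the (classical, well-documented) fact that the torus normalisers in a Suzuki group are maximal, which I would simply cite rather than reprove.

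\begin{proof}
Write $q = 2^{2m+1}$ with $m \geqs 1$. Since $p=2$, we have $d = |\bar{G}^F:G| = 1$ and so $N = N_G(\bar{T}_w) = N_{\bar{G}^F}(\bar{T}_w)$. There are three $G$-classes of $F$-stable maximal tori in $\bar{G}$, and the corresponding subgroups $N = N_G(\bar{T}_w)$ are
\[
D_{2(q-1)}, \quad (q+\sqrt{2q}+1){:}4, \quad (q-\sqrt{2q}+1){:}4.
\]
By inspection of the maximal subgroups of ${}^2B_2(q)$ (see \cite[Table 5.2]{LSS}), each of these subgroups is maximal in $G$, so Proposition \ref{p:Nmax} gives $b(G,N) = 2$.
\end{proof}
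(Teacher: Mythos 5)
Your proposal is correct and matches the paper's argument: the paper likewise observes that every torus normaliser in ${}^2B_2(q)$ is a maximal subgroup (citing Suzuki's classification) and then applies Proposition \ref{p:Nmax}. The extra structural detail you supply about the three classes of tori and their normalisers is accurate but not needed beyond the maximality claim.
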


\begin{proof}
By \cite{Suz}, $N$ is a maximal subgroup of $G$ and so we may apply Proposition \ref{p:Nmax} (the original reference is \cite[Lemma 4.39]{BLS}).
\end{proof}

\begin{prop}\label{p:e8}
If $G = E_8(q)$, then $b(G,N) = 2$.      
\end{prop}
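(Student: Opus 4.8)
The plan is to verify $\what{\mathcal{Q}}(G,N,2) < 1$ and then invoke Proposition \ref{p:base}. Write $N = T.R$ as in Section \ref{ss:tori}; then $|T| \leqslant (q+1)^8$ by Lemma \ref{l:Tsize}, while $|R| \leqslant |W|$ where $W = W(E_8)$ has order $696729600$, so $|N| \leqslant (q+1)^8|W|$. (The cases in which $N$ is maximal form a short list that can be read off from \cite[Table 5.2]{LSS} and are disposed of immediately by Proposition \ref{p:Nmax}, but this is not essential, since the estimates below apply to all $N$.) The decisive feature of $E_8$ is that $|G| > \tfrac12 q^{248}$ whereas every class of prime order elements is huge: long root elements form the unique smallest class, of size greater than $\tfrac13 q^{58}$; any other unipotent class has size at least $\tfrac13 q^{92}$; and every semisimple class of prime order has size greater than $q^{110}$, and greater than $q^{160}$ when the order is odd (the largest pseudo-Levi centraliser of an element of odd prime order being of type $A_2E_6$). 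These lower bounds follow from \cite[Section 3]{fpr2} together with the classification of unipotent classes and of pseudo-Levi subgroups of $E_8$.

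For root elements $x$, Lemma \ref{l:root} gives $p = 2$ and $|x^G \cap N| \leqslant \rho_R|T| \leqslant \rho_W(q+1)^8 = 120(q+1)^8$, hence
\[
|x^G|\cdot {\rm fpr}(x,G/N)^2 \leqslant \frac{3\,(120(q+1)^8)^2}{q^{58}} < q^{-40}
\]
for all $q \geqslant 2$. For any other unipotent $x$ (of order $r = p \in \{2,3,5,7\}$) I would use $|x^G \cap N| \leqslant i_p(N)$ and estimate $i_p(N)$ by a torus count in the style of Lemmas \ref{l:lu1} and \ref{l:sp0}: such an element lies in a coset $T\sigma$ with $1 \ne \sigma \in R$ of $p$-power order, the elements of order $p$ in $T\sigma$ lie in a coset of an $F$-stable subtorus whose dimension is at most $7$ (the fixed space of $\sigma$ on the cocharacter lattice has codimension at least $1$), and summing over the at most $|W|$ relevant $\sigma$ yields $i_p(N) \leqslant |W|(q+1)^{7}$. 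So each non-root unipotent class contributes at most $|W|^2(q+1)^{14}/(\tfrac13 q^{92})$ to $\what{\mathcal{Q}}(G,N,2)$, and as there are only boundedly many (a few hundred) unipotent classes the total unipotent contribution stays well below $1$ for every $q$.

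For semisimple $x$ of prime order $r \ne p$ there are two subcases. If $x^G \cap N \subseteq T$ then $|x^G \cap N| \leqslant |T| \leqslant (q+1)^8$; using $|x^G| > q^{110}$ ($r=2$) or $|x^G| > q^{160}$ ($r$ odd), and that $G$ has exactly $q^8$ semisimple classes, the combined contribution from these elements is at most $q^8|W|^2(q+1)^{16}q^{-110}$, which tends to $0$. If instead $x^G \cap N \not\subseteq T$, then $x$ has nontrivial image in $R \leqslant W$, forcing $r \in \{2,3,5,7\}$; here one again bounds $|x^G \cap N| \leqslant i_r(N) \leqslant |W|(q+1)^7$ exactly as above, while the nontrivial Weyl part forces $\dim x^G$ (equivalently $\nu(x)$) to be large, so $|x^G|$ is a large power of $q$ and the contribution is negligible. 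Assembling the root, unipotent and semisimple contributions gives $\what{\mathcal{Q}}(G,N,2) < 1$ for all $q$, whence $b(G,N) = 2$.

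I expect the main obstacle to be the book-keeping in the semisimple case: one must make sure the bound $i_r(N) \leqslant |W|(q+1)^7$ is genuinely uniform in $q$ (it is, because the number of elements of $W$ of order dividing $r$ is an absolute constant), and, more delicately, one must pin down the finitely many large-centraliser semisimple classes of each small prime order and check each against the crude bounds. For $E_8$ this list is short and the class sizes involved are so large (at least $q^{110}$) that the verification is routine, but it is the one place where something beyond a one-line estimate is needed; everything else follows from the fact that $|G|$ dwarfs $|N|^2$.
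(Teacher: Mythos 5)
Your strategy is the same as the paper's: bound $\what{\mathcal{Q}}(G,N,2)$ and apply Proposition \ref{p:base}, handling long root elements via Lemma \ref{l:root} (giving $|x^G\cap N|\leqs 120(q+1)^8$ against $|x^G|>q^{58}$) and everything else by cruder estimates. The root-element part matches the paper exactly. For the remaining elements, however, the paper's proof is a one-liner that your write-up substantially over-engineers: by \cite[Proposition 2.11]{BTh_ep} every prime order element other than a long root element satisfies $|x^G|>q^{92}$, and Lemma \ref{l:calc} applied with $A=|N|\leqs (q+1)^8|W|$ and $B=q^{92}$ bounds the entire non-root contribution by $A^2/B<1$ in one step — note that Lemma \ref{l:calc} bounds the \emph{sum} of the intersections by $A$, so there is no need to multiply a per-class estimate by the number of classes, and no need for your separate unipotent/semisimple subcases at all. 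Two of your supporting claims are also false, though not load-bearing: a semisimple element of order $3$ in $E_8$ can have centraliser $E_7T_1$ of dimension $134$, so its class has size roughly $q^{114}$, not greater than $q^{160}$ (and $A_2E_6$ is not the largest such centraliser); and the inequality $3(120(q+1)^8)^2/q^{58}<q^{-40}$ fails for small $q$ (at $q=2$ the left side is about $0.09$), although it is still less than $1$, which is all that is needed.
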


\begin{proof}
This is a straightforward application of Proposition \ref{p:base}. Let $x \in G$ be an element of prime order. If $x$ is a long root element, then $|x^G| > q^{58} = b_1$ and Lemma \ref{l:root} implies that there are at most $a_1 = 120(q+1)^8$
such elements in $N$ since the Weyl group $W= 2.{\rm O}_8^{+}(2)$ contains $120$ reflections. In all other cases, we have $|x^G| > q^{92} = b_2$ (see 
\cite[Proposition 2.11]{BTh_ep}) and we note that $|N| \leqs (q+1)^8|W| = a_2$. Therefore, by applying Lemma \ref{l:calc}, we deduce that 
\begin{equation}\label{e:Q2}
\what{\mathcal{Q}}(G,N,2) < a_1^2/b_1 + a_2^2/b_2 < 1 
\end{equation}
and the result follows.
\end{proof}

\begin{prop}\label{p:e7}
If $G = E_7(q)$, then $b(G,N) = 2$.      
\end{prop}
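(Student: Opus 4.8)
The plan is to establish $b(G,N)=2$ via Proposition \ref{p:base}, bounding $\what{\mathcal{Q}}(G,N,2)$ exactly as in the proof of Proposition \ref{p:e8} for $E_8(q)$. Recall that the Weyl group $W$ of $E_7$ has order $2903040$ and contains $63$ reflections, and that $|T|\leqs (q+1)^7$ by Lemma \ref{l:Tsize}, so $|N|\leqs (q+1)^7|W|$. First I would split the prime order elements $x\in G$ into two families. If $x$ is a long root element, then Lemma \ref{l:root} gives $p=2$ and $|x^G\cap N|\leqs 63(q+1)^7=:a_1$, while the standard lower bound gives $|x^G|>q^{34}=:b_1$. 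For every other element $x$ of prime order, \cite[Proposition 2.11]{BTh_ep} yields $|x^G|>q^{52}=:b_2$, and we may use the trivial bound $|x^G\cap N|\leqs |N|\leqs (q+1)^7|W|=:a_2$. By Lemma \ref{l:calc} we then have $\what{\mathcal{Q}}(G,N,2)<a_1^2/b_1+a_2^2/b_2$, and a direct numerical check shows this is less than $1$ for all $q\geqs 3$; note in particular that for odd $q$ only the second family occurs, so there $\what{\mathcal{Q}}(G,N,2)<a_2^2/b_2<1$.

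This leaves the case $q=2$, which I expect to be the main obstacle: here $q^{52}$ is too small relative to $|N|$ for the trivial bound $|x^G\cap N|\leqs |N|$ to control the non-root classes, and even $a_1^2/b_1$ is only brought below $1$ by replacing the crude estimate $q^{34}$ with the exact value of $|x^G|$ for the root class. To handle $q=2$ I would proceed as follows. Fix a threshold $K$ with $q^{21}(q+1)^{14}|W|^2<q^K$ (e.g.\ $K=67$ works, since the number of prime order classes in $E_7(q)$ is at most $q^7$ and $|W|<2^{22}$); then all prime order elements with $|x^G|>q^K$ together contribute a negligible amount to $\what{\mathcal{Q}}(G,N,2)$ via the trivial bound. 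It remains to treat the finitely many conjugacy classes with $|x^G|\leqs q^K$: the root ($A_1$) class, the next few unipotent classes ($2A_1$, $(3A_1)'$, $(3A_1)''$ and possibly one or two more), and a short list of semisimple classes of small support (those whose connected centraliser has type $E_6T_1$, $A_1D_6$ or $A_7$). For each such class I would bound $|x^G\cap N|$ directly from the structure $N=T.R$ with $|T|\leqs 3^7$ and $R\leqs W$, counting the relevant prime order elements coset by coset in the spirit of the proofs of Lemmas \ref{l:lu1}, \ref{l:sp0} and \ref{l:root}: an element of $N$ of prime order projecting to $\sigma\in R$ lies in a coset $T\sigma$ containing boundedly many such elements, controlled by the dimension of the appropriate $\sigma$-stable subtorus of $\bar{T}$. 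Summing the contributions $|x^G\cap N|^2/|x^G|$ over this finite list, together with the tail estimate, should yield $\what{\mathcal{Q}}(G,N,2)<1$.

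The hard part is precisely this $q=2$ bookkeeping: one needs genuinely accurate (not merely trivial) estimates for $|x^G\cap N|$ for each of the small classes, which in turn requires pinning down the possible torus types $T$ and the action on $\bar{T}$ of the relevant involutions and order-$3$ elements of $W$. As a fallback one could try to verify $q=2$ computationally as in Lemma \ref{l:excepcomp}, but the enormous permutation degree of $E_7(2)$ on $G/N$ makes a direct {\sc Magma} check unattractive, so the refined estimate is the preferred route.
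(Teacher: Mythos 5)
Your treatment of $q \geqs 3$ is exactly the paper's: the same splitting into long root elements (with $a_1 = 63(q+1)^7$, $b_1 = q^{34}$ via Lemma \ref{l:root}) and everything else (with $a_2 = (q+1)^7|W|$, $b_2 = q^{52}$), and the numerics do go through. You have also correctly diagnosed that $q=2$ is the critical case, where $a_2^2/b_2 \approx 9000$ with the crude bound.

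However, for $q=2$ your proposal has a genuine gap: the threshold-plus-class-by-class scheme is only a plan, and you explicitly defer its essential content. With a threshold around $q^{67}$ there are roughly ten conjugacy classes below it (the unipotent classes $A_1$, $2A_1$, $(3A_1)'$, $(3A_1)''$, $A_2$ and several semisimple classes), and for each one you would need a genuinely sharp bound on $|x^G\cap N|$ for every one of the sixty F-classes of tori; none of these estimates is carried out, and it is exactly here that the proof would live or die. The paper avoids all of this with one structural observation you missed: the only torus normaliser in $E_7(2)$ large enough to defeat the crude bound is $N = 3^7.W$, and this subgroup is \emph{maximal} in $G$, so $b(G,N)=2$ is already known from \cite[Proposition 4.2]{BTh_ep} (Proposition \ref{p:Nmax}). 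In every other case one reads off from \cite[Tables II, III]{DerFak} that $|T| \leqs 729$ and computes $|R| \leqs 46080$ and $\rho_R \leqs 31$, so the original two-term estimate survives with $a_2 = 729\cdot 46080$ and $a_1 = 31\cdot 3^7$, giving
\[
\what{\mathcal{Q}}(G,N,2) < \frac{(31\cdot 3^7)^2}{2^{34}} + \frac{(729\cdot 46080)^2}{2^{52}} \approx 0.27 + 0.25 < 1.
\]
So the correct fix is not finer fixed point ratio bookkeeping but an appeal to maximality for the single bad torus together with the sharp drop in $|T|$ and $|R|$ for all the others; without that observation (or the completed class-by-class analysis) your argument does not yet establish the $q=2$ case.
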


\begin{proof}
For $q \geqs 3$ we can repeat the argument in the proof of the previous proposition and we deduce that \eqref{e:Q2} holds on substituting $a_1 = 63(q+1)^7$, $b_1 = q^{34}$, $b_2=q^{52}$ and $a_2 = (q+1)^7|W|$, where $W = 2 \times {\rm Sp}_6(2)$ is the Weyl group of $G$. 

Now assume $q=2$ and write $N = T.R$ as before, with $R \cong C_W(w)$ for some $w \in W$ (see Section \ref{ss:tori}). First note that if $N = 3^7.W$ then $N$ is maximal and we can apply Proposition \ref{p:Nmax}. In each of the remaining cases, we claim that $|N| \leqs 729 \times 46080 = a_2$, noting that this bound clearly holds if $N = W$. For $N \ne W$, we see that $|T| \leqs 729$ by inspecting \cite[Tables II, III]{DerFak} and a routine calculation with centralisers in $W$ yields $|R| \leqs 46080$, which justifies the claim. Similarly, we calculate that $\rho_R \leqs 31$ if $w \ne 1$ and $\rho_R = 63$ if $w = 1$, where we recall that $\rho_R$ is the number of reflections in $R$. Then by applying Lemma \ref{l:root}, recalling that we may assume $N = W$ if $w=1$, we see that $N$ contains at most $a_1 = 31.3^7$ long root elements. So if we define $b_1$ and $b_2$ as above, we deduce that \eqref{e:Q2} holds and the result follows.
\end{proof}

Next we turn to the groups $G = E_6^\epsilon(q)$. For $\e=+$, the possibilities for $N = T.R$ can be read off from \cite[Table 1]{GaltStar}. Now assume $\e=-$. Here we recall that there is a bijection from the set of  $F$-classes in $W$ to the set of usual conjugacy classes of $W$, and similarly the $F$-centraliser $R = C_{W,F}(w)$ is isomorphic to $C_{W,F'}(w) \cong C_W(w)$, where $F'$ is the standard Steinberg endomorphism of $\bar{G}$ with fixed point group $E_6(q)$. This allows us to determine the structure of $\bar{T}_w^F$ from the cyclic structure of $\bar{T}_w^{F'}$ given in \cite[Table 1]{GaltStar} by simply substituting $-q$ for $q$ in the order of each cyclic factor, adjusting the sign appropriately. For example, the first three rows in the table correspond to the following possibilities for $N_{\bar{G}^F}(\bar{T}_w)$:  
\[ 
(C_{q+1})^6{:}W, \, ((C_{q+1})^4 \times C_{q^2-1}){:}(S_2 \times S_6), \, ((C_{q+1})^2 \times (C_{q^2-1})^2){:}(D_8 \times S_4).
\]

\begin{prop}\label{p:e6}
If $G = E_6^\epsilon(q)$, then $b(G,N) = 2$.      
\end{prop}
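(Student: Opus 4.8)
The plan is to follow the template established in the proofs of Propositions \ref{p:e8} and \ref{p:e7}, applying Proposition \ref{p:base} via the Liebeck--Shalev bound $\what{\mathcal{Q}}(G,N,2) \leqs \sum_i |x_i^G| \cdot \fpr(x_i,G/N)^2$, but with extra care for the small-field cases. For $q$ sufficiently large the argument is essentially automatic: one splits the prime-order elements $x \in G$ into long root elements, for which Lemma \ref{l:root} gives $|x^G \cap N| \leqs \rho_R|T| \leqs 36(q+1)^6$ (the Weyl group $W$ of $E_6$ has $36$ reflections) while $|x^G| > q^{22} = b_1$, and all other elements, for which \cite[Proposition 2.11]{BTh_ep} gives $|x^G| > q^{32} = b_2$ together with the trivial bound $|x^G \cap N| \leqs |N| \leqs (q+1)^6|W| = a_2$, where $|W| = 51840$. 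Lemma \ref{l:calc} then yields $\what{\mathcal{Q}}(G,N,2) < a_1^2/b_1 + a_2^2/b_2 < 1$ for all $q \geqs 3$ (one checks the two summands separately; the root-element contribution is the larger one and is already $o(1)$).

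First I would dispose of the cases where $N$ is maximal — by \cite[Table 1]{GaltStar} and Proposition \ref{p:Nmax} this handles the split and cyclic-Singer-type tori — and reduce to $q = 2$, i.e. $G = E_6^{\e}(2)$, which (as flagged in Section \ref{ss:comp}, the remark before Lemma \ref{l:small}) is explicitly singled out as needing {\sc Magma} assistance. For $G = E_6(2)$ and $E_6^{-}(2)$ the crude bounds above are too weak: $a_2^2/b_2$ with $q=2$ is far larger than $1$. Here I would mimic the $q=2$ analysis in the proof of Proposition \ref{p:e7}: write $N = T.R$ with $R \cong C_{W,F}(w) \cong C_W(w)$, read off $|T|$ from \cite[Table 1]{GaltStar} (for $\e=+$) or from the substitution $q \mapsto -q$ described just before the statement (for $\e=-$), compute $|R|$ and the number of reflections $\rho_R$ from the conjugacy class data of $W = W(E_6)$, and thereby obtain sharp values of $a_1 = \rho_R|T|$ and $a_2 = |N|$ for each of the (finitely many) torus types. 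One then verifies $a_1^2/b_1 + a_2^2/b_2 < 1$ case by case, with $b_1 = 2^{22}$ and $b_2 = 2^{32}$; if a handful of torus types with large $|N|$ still resist, I would use {\sc Magma} to estimate $\what{\mathcal{Q}}(G,N,2)$ directly by enumerating the prime-order classes meeting $N$ (as indicated in Section \ref{ss:comp}), or else construct $G$ and an overgroup $L \geqs N$ as a permutation group and exhibit $x$ with $L \cap L^x = 1$.

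The main obstacle I anticipate is precisely the $E_6^{\e}(2)$ cases with a relatively large point stabiliser — those where $w$ has small order (so $|R|$ is close to $|W| = 51840$) while $|T|$ is not small, since then neither the root-element term nor the bulk term is obviously below $1$ and one cannot simply invoke \cite[Table 5.2]{LSS}. For these one genuinely needs the refined class-by-class count of $|x^G \cap N|$ coming from the structure of $C_W(w)$ and its action on $\bar{T}_w$, exactly as in Case 4 of the classical-group arguments, rather than the trivial $|x^G \cap N| \leqs |N|$; this is where the {\sc Magma} computation referenced for $E_6^{\e}(2)$ in Section \ref{ss:comp} does its work. Once that finite check is complete, assembling the pieces gives $b(G,N) = 2$ in all cases.
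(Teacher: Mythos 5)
Your overall strategy coincides with the paper's: the two--term bound $a_1^2/b_1+a_2^2/b_2$ for large $q$, disposal of the maximal cases via Proposition \ref{p:Nmax}, and a refined, {\sc Magma}--assisted count for $E_6^{\e}(2)$. The $q=2$ plan is in the right spirit and matches what the paper actually does: outside five listed torus types one checks $\rho_R|T|\leqs 810$ and $|N|\leqs 12960$, which suffices; for the five types with large $|N|$ (e.g.\ $3^5.W$ in ${}^2E_6(2)$, where $|N|^2/b_2$ is of order $10^4$) the paper splits the prime order elements into four classes (long root elements, $A_1^2$ involutions, order $3$ elements with centraliser $D_5T_1$, and the rest with $|x^G|>2^{41}$) and computes each $a_i$ by constructing $\tilde{N}=N_{\bar{G}^F}(\bar{T}_w)$ in {\sc Magma} and reading off Jordan forms on the adjoint module --- precisely the kind of refined class-by-class count you anticipate needing.

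There is, however, a genuine numerical gap: your claim that the crude bound closes the case $q=3$ is false. With $a_2=(q+1)^6|W|=4^6\cdot 51840\approx 2.1\times 10^8$ and $b_2=q^{32}=3^{32}\approx 1.85\times 10^{15}$ (or the cited $(q-1)q^{31}\approx 1.24\times 10^{15}$), one gets $a_2^2/b_2\approx 24$ (resp.\ $36$), so $\what{\mathcal{Q}}(G,N,2)<1$ does not follow, and your parenthetical remark that the root-element term dominates is also wrong at $q=3,4$. The paper therefore treats $q=3$ separately: $N=4^6.W$ is maximal in ${}^2E_6(3)$ (so Proposition \ref{p:Nmax} applies), and for every other torus at $q=3$ one has the much better bound $|N|\leqs 2^6|W|$, which restores $a_2^2/b_2<1$. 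You would need to supply this (or an equivalent) observation; reducing directly to $q=2$ leaves $q=3$ unproved. Two smaller points: your lower bounds $|x^G|>q^{22}$ and $|x^G|>q^{32}$ are strictly stronger than the cited $(q-1)q^{21}$ and $(q-1)q^{31}$ from \cite[Proposition 2.11]{BTh_ep} and would need independent justification, and the maximal cases should be identified from \cite[Table 5.2]{LSS} rather than \cite[Table 1]{GaltStar}, which only records torus structures.
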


\begin{proof}
Let $x \in G$ be an element of prime order and note that $|x^G|>(q-1)q^{21} = b_1$ if $x$ is a long root element, otherwise $|x^G|>(q-1)q^{31} = b_2$ (see \cite[Proposition 2.11]{BTh_ep}). Now $|N| \leqs (q+1)^6|W| = a_2$, where $W = {\rm PGSp}_4(3)$ is the Weyl group of $G$, and we note that $N$ contains at most $a_1 = 36(q+1)^6$ long root elements by Lemma \ref{l:root}. Putting these estimates together, we deduce that \eqref{e:Q2} holds for $q \geqs 4$. 

Next assume $q=3$. If $G = {}^2E_6(3)$ and $N = 4^6.W$ then $N$ is maximal and we may apply Proposition \ref{p:Nmax}. In each of the remaining cases one can check that $|N| \leqs 2^6 |W| = a_2$ (for example, this follows by inspecting \cite[Table 1]{GaltStar}) and we deduce that \eqref{e:Q2} holds, where $a_1,b_1$ and $b_2$ are defined as before.

For the remainder of the proof, we may assume $q=2$ and $N = T.R$. By inspecting \cite[Table 5.2]{LSS} we first observe that $N$ is maximal when $(\e,N) = (+, 7^3{:}3^{1+2}.{\rm SL}_2(3))$ in which case the result follows from Proposition \ref{p:Nmax}. In considering the remaining cases, let us assume for now that $(\e,N)$ is not one of the following:  
\begin{equation}\label{e:casesE6}
(+,W), \, (+, 3^4.{\rm O}_4^{+}(3)), \, (-, 3^5.W),  \, (-, 3^4.(2 \times {\rm Sp}_4(2))), \,
(-, (3^2 \times 9).(3 \times (S_3 \wr S_2))).
\end{equation}

If we exclude these cases, then one can check that $\rho_R|T| \leqs 810 = a_1$,  with equality if $\epsilon=+$ and $N = (3^2 \times 15).(4 \times S_4)$, where $\rho_R$ is the number of reflections in $R$. In addition, by inspecting \cite[Table 1]{GaltStar} we find that $|N| \leqs 12960 = a_2$. Therefore, Lemma \ref{l:root} implies that the contribution to $\widehat{\mathcal{Q}}(G,N,2)$ from long root elements is at most $a_1^2/b_1$ and one can check that \eqref{e:Q2} holds, where $b_1$ and $b_2$ are defined as above. 

Finally, we handle the cases listed in \eqref{e:casesE6}. In order to determine an upper bound on $\widehat{\mathcal{Q}}(G,N,2)$, it will be convenient to work in $\tilde{N} = N.(2-\epsilon)  = N_{\bar{G}^F}(\bar{T}_w)$. First let $a_1$ be the number of long root elements in $\tilde{N}$ and let $a_2$ be the number of involutions in $\tilde{N}$ that are contained in the $G$-class labelled $A_1^2$ in \cite[Table 22.2.3]{LieS}. Similarly, let $a_3$ be the number of elements $x \in \tilde{N}$ of order $3$ with $C_{\bar{G}}(x)^0 = D_5T_1$ and let $a_4$ be the number of remaining prime order elements in $\tilde{N}$. Then
\begin{equation}\label{e:Q4}
\what{\mathcal{Q}}(G,N,2) < \sum_{i=1}^4 a_i^2/b_i,
\end{equation}
where $b_1$ and $b_2=b_3$ are defined as above and we set $b_4 = 2^{41}$, noting that $|x^G|>b_4$ for all $x \in G$ of prime order other than long root elements, involutions in the $A_1^2$ class and order $3$ elements with a centraliser of type $D_5T_1$. 

To complete the argument, we need to compute $a_1$, $a_2$, $a_3$ and $a_4$. To do this, we proceed as in the proof of \cite[Proposition~2.2]{BTh_comp}, first working with {\sc Magma} to construct $\tilde{N}$ as a subgroup of $\bar{G}^{F^{\ell}} = {\rm Inndiag}(E_6(2^{\ell}))$ for $\ell = 1,2,2,4,6$, respectively (referring to the five cases in \eqref{e:casesE6}). In terms of this embedding, we then construct a set of representatives of the conjugacy classes of elements in $\tilde{N}$ of prime order and we compute the Jordan form of each representative on the adjoint module $V$ for $\bar{G}$. If $x \in \tilde{N}$ is an involution, we can then determine the $G$-class of $x$ by inspecting \cite[Table~6]{Lawunip}. Similarly, if $x$ has order $3$ then we read off $\dim C_V(x) = \dim C_{\bar{G}}(x)$, which allows us to identify the structure of $C_{\bar{G}}(x)^0$. In this way, we deduce that $a_i$ takes the values recorded in Table \ref{tab:ai} and it is straightforward to check that the upper bound in \eqref{e:Q4} yields $\what{\mathcal{Q}}(G,N,2)<1$.
\begin{table}
\[
\begin{array}{llllll} \hline
\e & N & a_1 & a_2 & a_3 & a_4 \\ \hline
+ & W & 36 & 270 & 0 & 6539 \\
+ & 3^4.{\rm O}_4^{+}(3) & 36 & 198 & 0 & 4481 \\  
- & 3^5.W & 108 & 2430 & 54 & 672083 \\
- & 3^4.(2 \times {\rm Sp}_4(2)) & 46 & 450 & 30 & 19571 \\  
- & (3^2 \times 9).(3 \times (S_3 \wr S_2)) & 18 & 81 & 18 & 4262 \\ \hline 
\end{array}
\]
\caption{ }
\label{tab:ai}
\end{table}
\end{proof}

\begin{prop}\label{p:f4}
If $G = F_4(q)$, then $b(G,N) = 2$.      
\end{prop}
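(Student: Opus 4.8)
The plan is to follow the template of Propositions \ref{p:e8} and \ref{p:e7}, applying Proposition \ref{p:base}, but with one extra family of elements needing a separate estimate. Recall that the Weyl group $W = W(F_4)$ has order $1152$ and contains exactly $24$ reflections, and that every maximal torus of $G$ has rank $4$; hence $|T| \leqs (q+1)^4$ by Lemma \ref{l:Tsize} and $|N| \leqs (q+1)^4|W| = a_2$. Let $x \in G$ have prime order, and split the analysis into three cases. If $x$ is a long root element then $p = 2$, $|x^G| > b_1$ with $b_1$ of size roughly $q^{16}$ (see \cite[Proposition 2.11]{BTh_ep}), and Lemma \ref{l:root} bounds the number of such elements in $N$ by $a_1 = 24(q+1)^4$. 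If $x$ is a semisimple involution with $C_G(x)$ of type $B_4$ — which occurs only when $p$ is odd — then again $|x^G| > b_1$, but now I would bound $|x^G \cap N| \leqs i_2(N)$, using that each coset $T\s \subseteq N$ with $\s^2 = 1$ meets $x^G$ in at most $(q+1)^4$ points (as in the proof of Lemma \ref{l:root} and the hyperoctahedral counts of Section \ref{s:symp}), so that $i_2(N) \leqs (1 + i_2(W))(q+1)^4 =: a_1'$. For every remaining prime order $x$ we have $|x^G| > b_2$ with $b_2$ of size roughly $q^{22}$, and here the trivial bound $|x^G \cap N| \leqs |N| \leqs a_2$ suffices. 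Lemma \ref{l:calc} then gives
\[
\what{\mathcal{Q}}(G,N,2) < a_1^2/b_1 + (a_1')^2/b_1 + a_2^2/b_2 < 1
\]
for all $q$ above an explicit bound, whence $b(G,N) = 2$ in those cases.

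For the finitely many small values of $q$ left over I would argue as follows. Write $N = T.R$ with $R \cong C_W(w)$. If $N$ is a maximal subgroup of $G$ — the relevant cases can be read off from \cite[Table 5.2]{LSS} — then $b(G,N) = 2$ by Proposition \ref{p:Nmax}. In each remaining case I would refine the three estimates above: the cyclic structure of $T$ recorded in \cite{BuG} (equivalently, the list of possibilities for $\bar{T}_w^F$) bounds $|T|$ by a small explicit constant, and a direct inspection of centralisers in $W$ bounds $|R|$, $i_2(R)$ and the number $\rho_R$ of reflections in $R$ (with $\rho_R = 24$ only for $w = 1$, i.e. $N = W$, which is handled directly). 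Substituting these sharper quantities into the displayed bound gives $\what{\mathcal{Q}}(G,N,2) < 1$ in all but a handful of cases, and for those I would fall back on a {\sc Magma} computation in the style of the proof of Proposition \ref{p:e6}: construct $\tilde{N} = N_{\bar{G}^F}(\bar{T}_w)$ inside ${\rm Inndiag}(F_4(q))$, list representatives of its prime order classes, compute the Jordan form of each on the adjoint module $V$ for $\bar{G}$, and thereby read off the exact $G$-class — and so the precise value of $|x^G \cap N|$ — from the class data in \cite{LieS, Lawunip}.

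The hard part, and the reason this case is genuinely tougher than $E_7$ and $E_8$, is the coincidence that the $B_4$-type semisimple involution has a conjugacy class of the same dimension (namely $16$) as the long root class. One therefore cannot simply bound all non-root prime order elements below by a single large power of $q$, as was done for $E_7$ and $E_8$; the $B_4$-involution must be isolated and controlled by a sufficiently sharp count of the involutions it contributes to $N$. This count is easy for large $q$ but delicate for $q = 2$ and $q = 3$, where the generic fixed point ratio estimate no longer closes and one is forced into the case-by-case refinements and {\sc Magma} computations described above; the only other labour involved is routine bookkeeping over the $25$ classes of maximal tori of $F_4$.
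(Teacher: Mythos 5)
Your overall strategy (a fixed point ratio estimate for large $q$, with case-by-case refinements and {\sc Magma} for small $q$) is the same as the paper's, but the large-$q$ trichotomy has a concrete gap in characteristic $2$. You assert that every prime order element other than a long root element or a $B_4$-type semisimple involution satisfies $|x^G| > b_2 \approx q^{22}$. When $p=2$ this is false: the short root involutions (class $\tilde{A}_1$) also form a class of dimension $16$, so $|x^G| \approx q^{16}$, and they are neither long root elements nor semisimple. They are not automatically excluded from $N$ either, so applying Lemma \ref{l:calc} with the hypothesis $|x^G| \geqs b_2$ to the ``remaining'' elements is invalid for $q$ even. (Relatedly, ``if $x$ is a long root element then $p=2$'' is not what Lemma \ref{l:root} says: long root elements exist for all $p$; the lemma forces $p=2$ only for root elements \emph{lying in $N$}.) The fix is either to isolate the short root class as well, or to do what the paper does: for $q \geqs 7$ the crude bounds $|x^G| > q^{16}$ for \emph{every} prime order element and $|N| \leqs (q+1)^4|W|$ already give $\what{\mathcal{Q}}(G,N,2) < |N|^2/q^{16} < 1$, so no decomposition of the classes is needed at all in the generic range. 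The genuine difficulty in $F_4$ is not the coincidence of dimensions between the $A_1$ class and the $B_4$-involution class, but simply that the margin $q^{16}$ versus $(q+1)^8|W|^2$ only closes for $q \geqs 7$, which forces a detailed treatment of $q \leqs 5$.

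For the small values of $q$ your plan is close to the paper's (which separates involutions, bounded via exact {\sc Magma} counts of $i_2(N)$ or of involutions outside the class $A_1\tilde{A}_1$, from odd order elements, bounded via $|x^G| > q^{21}$ or $q^{28}$), but your final fallback is not guaranteed to succeed in every case. In particular, for $q=2$ and $N = 3^4{:}W$ the probabilistic method does not close even with exact class fusion data — $N$ contains on the order of $10^3$ long root elements against a root class of size roughly $2^{16}$ — and the paper instead constructs $N$ explicitly as the normaliser of the unique abelian subgroup of order $3^4$ in a Sylow $3$-subgroup and exhibits $x$ with $N \cap N^x = 1$ by random search. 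You should add this direct computation as the terminal fallback; also note that maximality of $N$ (and hence Proposition \ref{p:Nmax}) is not actually available for any torus normaliser in $F_4(q)$ according to \cite[Table 5.2]{LSS}, so that escape route does not arise here.
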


\begin{proof}
Let $W = {\rm O}_{4}^{+}(3)$ be the Weyl group of $G$ and note that $|N| \leqs |W|(q+1)^4 = a_1$ and \cite[Proposition 2.11]{BTh_ep} gives $|x^G|>q^{16} = b_1$ for all $x \in G$ of prime order. By Lemma \ref{l:calc}, this implies that 
$\what{\mathcal{Q}}(G,N,2) < a_1^2/b_1$, which is less than $1$ for $q \geqs 7$. 
 
For the remainder of the proof we may assume $q \leqs 5$. We will postpone the analysis of the cases $N = (q+1)^4.W$ with $q \in \{2,3\}$ to the end of the proof. Let $x \in N$ be an element of prime order $r$.

Suppose $q$ is odd. If $r=2$ then $|x^G|> q^{16} = b_1$, whereas $|x^G|> \frac{1}{2}(q-1)q^{21} = b_2$ if $r$ is odd (note that there are no root elements in $N$ by Lemma \ref{l:root}). We claim that $i_2(N) \leqs a_1$ and $|N| \leqs a_2$, where $a_1$ and $a_2$ are defined as in Table \ref{tab:ai2}. Here the upper bound on $|N|$ can be read off from \cite[Table 5.2]{Gager}. To obtain the upper bound on $i_2(N)$, we use {\sc Magma} to construct each possibility for $N$ as a subgroup of $F_4(q^{\ell})$ for some suitable $\ell$, which allows us to compute $i_2(N)$ precisely in each case. It is now straightforward to check that \eqref{e:Q2} holds. 

Now assume $q$ is even. If $r=2$ then $|x^G|> q^{16} = b_1$ and we have $|x^G|> q^{28} = b_2$ if $r$ is odd, or if $r=2$ and $x$ is in the class labelled $A_1\tilde{A}_1$. As above, we note that $|N| \leqs a_2$, where $a_2$ is defined in Table \ref{tab:ai2}. With the aid of {\sc Magma}, we also calculate that there are at most $a_1$  involutions in $N$ that are not in the class labelled $A_1\tilde{A}_1$, where $a_1$ is also given in Table \ref{tab:ai2} (here we can proceed as in the final part of the proof of Proposition \ref{p:e6} in order to determine the $G$-class of each $N$-class of involutions). Once again, one can check that 
\eqref{e:Q2} holds and the result follows.

\begin{table}
\[
\begin{array}{llllll} \hline
q & a_1 & a_2 & b_1 & b_2 \\ \hline
5 & 16000 & 6^4.|W| & 5^{16} & 2.5^{21} \\ 
4 & 6000 & 5^4.|W| &  4^{16} & 3^{28} \\ 
3 & 847 & 12288  & 3^{16} & 3^{21} \\
2 & 234 & 3528  & 2^{16} & 2^{28} \\ \hline
\end{array}
\]
\caption{ }
\label{tab:ai2}
\end{table}

It remains to deal with the two excluded cases. First assume $q=3$ and $N = 4^4.W$, in which case we use {\sc Magma} to construct $N$ as a subgroup of $F_4(9)$. Let $x \in N$ be an element of prime order $r$. If $r=3$ then $|x^G| > 3^{21} = b_3$ since $N$ contains no root elements and we calculate that $i_3(N) = 5120 = a_3$. Similarly, if $r=2$ and $C_{\bar{G}}(x) = B_4$, then $|x^G| > 3^{16} = b_1$ and there are $a_1 = 51$ such elements in $N$. And if $r=2$ and $C_{\bar{G}}(x) \ne B_4$, or if $r \geqs 5$, then $|x^G|>3^{28} = b_2$ and we set $a_2 = |N|$.
Therefore, we conclude that
\[
\what{\mathcal{Q}}(G,N,2) < \sum_{i=1}^3a_i^2/b_i < 1
\] 
and the result follows.

Finally suppose $q=2$ and $N = 3^4.W$. Here we use {\sc Magma} to construct $G$ as a permutation group of degree $139776$ and we find a Sylow $3$-subgroup $H$ of $G$. Now $H$ has a unique abelian subgroup of order $3^4$, which we may assume is the relevant maximal torus $T$. We can now construct $N = N_G(T)$ and then find a random element $x \in G$ with $N \cap N^x = 1$.
\end{proof}

\begin{prop}\label{p:g2}
If $G = G_2(q)'$, then $b(G,N) = 2$.      
\end{prop}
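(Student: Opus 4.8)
The plan is to combine Lemma~\ref{l:excepcomp} with the probabilistic approach of Proposition~\ref{p:base}. By the remarks at the start of the section we may assume $q \geqs 3$, and the cases $q \in \{3,4,5\}$ are covered by Lemma~\ref{l:excepcomp}, so it suffices to treat $q \geqs 7$ and show $\what{\mathcal{Q}}(G,N,2)<1$. Write $N = T.R$ with $R = C_{W,F}(w) \cong C_W(w)$, where $W = D_{12}$ is the Weyl group of $G_2$. The six $F$-classes of $W$ give $N$ of one of the shapes $C_{q^2\pm q+1}{:}6$, $(C_{q-1})^2{:}D_{12}$, $(C_{q+1})^2{:}D_{12}$ or $C_{q^2-1}{:}2^2$ (two classes of the last type), and in every case $|T| \leqs (q+1)^2$ and $|N| \leqs 12(q+1)^2$. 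When $N = C_{q^2\pm q+1}{:}6$ the subgroup $N$ is maximal in $G$ and the result is immediate from Proposition~\ref{p:Nmax}; in any case these cases also fall out of the estimate below.

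Next I would organise the elements of $N$ of prime order according to their $G$-class, using the description of $\bar{N} = \bar{T}.W$. The two smallest classes of prime-order elements in $G_2(q)$ are the long root elements, with $|x^G| = q^6-1$, and the semisimple elements of order $3$ with $C_{\bar{G}}(x)^\circ$ of type $A_2$, with $|x^G| = q^3(q^3-\e)$ for a suitable sign; in characteristic $3$ the long root elements have order $3$ and still satisfy $|x^G| = q^6-1 \geqs q^3(q^3-1)$. For every other $x \in G$ of prime order we have $|x^G| > \tfrac{1}{2}q^8$: semisimple involutions give $|x^G| = q^4(q^4+q^2+1)$, the other unipotent classes have dimension at least $8$, and semisimple elements of order $r \geqs 5$ satisfy $|x^G| > \tfrac{1}{2}q^{10}$ (see \cite[Proposition~2.11]{BTh_ep} and \cite{Lawunip}). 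On the $N$-side, the intersection $x^G \cap N$ is controlled by $R \leqs D_{12}$. An element of order $r \geqs 5$ must lie in $T$ (as $r \nmid |R|$), so $|x^G \cap N| \leqs (q+1)^2$; an element of order $3$ lies in $T$ (at most $8$ elements) or in one of the at most two cosets $T\s$ with $\s \in R$ of order $3$, each of size $|T|$, so $|x^G \cap N| \leqs 2(q+1)^2 + 8$; and a long root element lies in one of the at most six reflection cosets $T\s$, inside which the root elements form a coset of a subtorus of $T$ of codimension one -- this sharpens Lemma~\ref{l:root} and follows from the proof of \cite[Proposition~1.13]{LLS} -- so $|x^G \cap N| \leqs 6(q+1)$.

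Assembling these bounds via Lemma~\ref{l:calc} I would obtain
\[
\what{\mathcal{Q}}(G,N,2) < \frac{(6(q+1))^2}{q^6-1} + \frac{(2(q+1)^2+8)^2}{q^3(q^3-1)} + \frac{2\,(12(q+1)^2)^2}{q^8},
\]
where the first term is present only when $p = 2$, since for $p \ne 2$ there are no root elements in $N$ by Lemma~\ref{l:root} (and the third term already absorbs the contributions of all higher unipotent classes and of semisimple elements of order $r\geqs 5$, crudely). A direct check shows the right-hand side is less than $1$ for all $q \geqs 7$; for $q = 7$ and $q = 8$, where the estimate is tightest, one can sharpen the third term by restricting the sum to the (few) primes actually dividing $|N|$. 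Proposition~\ref{p:base} then gives $b(G,N) = 2$.

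The main obstacle will be the bound on $|x^G \cap N|$ for the long root elements. The estimate $\rho_R|T| = 6(q+1)^2$ that comes straight out of Lemma~\ref{l:root} is too weak to beat $q^6-1$ when $q = 8$, so one genuinely needs the refinement that the root elements inside a reflection coset $T\s$ form a one-parameter family, giving only $O(q)$ rather than $O(q^2)$ of them. If one prefers to avoid this analysis, the alternative is to dispose of the handful of problematic groups -- essentially $G_2(8)$, and perhaps $G_2(9)$ -- by a direct {\sc Magma} computation in the style of Lemma~\ref{l:excepcomp} and Proposition~\ref{p:f4}, and then apply the generic estimate only for the remaining $q$.
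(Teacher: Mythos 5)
Your proposal is correct and follows the same overall strategy as the paper: reduce to $q \geqs 7$ via Lemma~\ref{l:excepcomp}, isolate the two small classes (root elements and order-$3$ semisimple elements with centraliser of type $A_2$, exactly the set $\mathcal{A}$ the paper works with), and bound $\what{\mathcal{Q}}(G,N,2)$ via Lemma~\ref{l:calc}, noting that Lemma~\ref{l:root} kills the root-element term for odd $p$. The one place you diverge is in how $|N\cap\mathcal{A}|$ is controlled for small $q$: the paper constructs $N$ in {\sc Magma} and counts directly, obtaining $a_2\leqs 2$ for $q\in\{7,9,11,13\}$ and $a_2\leqs 29$ for $q=8$, whereas you propose the purely theoretical refinement that the root elements in a reflection coset $\bar{T}\s$ form a one-dimensional family, giving $O(q)$ rather than $\rho_R|T|=O(q^2)$. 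You correctly identify that this refinement is exactly what is needed (the crude bound from Lemma~\ref{l:root} fails only at $q=8$), and it is plausible -- the involutions in $\bar{T}\s$ lie in a coset of the kernel of the homomorphism $t\mapsto t\,\s(t)$, whose image lies in the $1$-dimensional fixed subtorus of the reflection, so the kernel has dimension $1$ -- but as stated it is an unproved sharpening of Lemma~\ref{l:root} and one would also need to track the (finitely many) connected components of that kernel to pin down the constant. Your fallback of settling $G_2(8)$ computationally is precisely what the paper does, so either route closes the argument; the theoretical route buys a computer-free proof for all $q\geqs 7$ at the cost of having to justify the refined coset count, while the paper's route trades that for a short {\sc Magma} calculation. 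One small slip worth noting: the class of order-$3$ elements with centraliser ${\rm SL}_3^{\e}(q)$ has size $q^3(q^3+\e)$, not $q^3(q^3-\e)$, but this does not affect your estimates since you only use the lower bound $q^3(q^3-1)$.
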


\begin{proof}
Here $|N| \leqs 12(q+1)^2 = a_1$ and we may assume $q \geqs 7$ by Lemma \ref{l:excepcomp}. Let $\mathcal{A}$ be the set of elements $x \in G$ that are either long root elements, or short root elements if $p=3$, or semisimple elements of order $3$ with $C_{\bar{G}}(x) = A_2$. Then if $x \in G$ has prime order, \cite[Proposition 2.11]{BTh_ep} implies that $|x^G|>(q-1)q^{5} = b_2$ if $x \in \mathcal{A}$, otherwise $|x^G| > (q-1)q^7 = b_1$. Set $a_2 = |N \cap \mathcal{A}|$. 

If $q \geqs 16$ then $\what{\mathcal{Q}}(G,N,2) < a_1^2/b_2<1$ and so we may assume $7 \leqs q \leqs 13$. If $q$ is odd then with the aid of {\sc Magma} we compute $a_2 \leqs 2$ (note that $N$ does not contain any long root elements by Lemma \ref{l:root}) and one checks that \eqref{e:Q2} holds. Similarly, if $q=8$ then $a_2 \leqs 29$ and \eqref{e:Q2} holds once again.
\end{proof}

\begin{prop}\label{p:3d4}
If $G = {}^3D_4(q)$, then $b(G,N) = 2$.      
\end{prop}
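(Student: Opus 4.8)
The plan is to follow the same template used for the other exceptional groups, namely Proposition~\ref{p:base} and Lemma~\ref{l:calc}, treating the triality group ${}^3D_4(q)$ by separating long root elements (which, if $p=2$, lie in $N$ only as reflections in $R$ by Lemma~\ref{l:root}) from all other elements of prime order. First I would recall that the Weyl group is $W = D_{12}$ of order $12$, so $|N| = |T.R| \leqs 12(q^3+1)(q+1)$ by Lemma~\ref{l:Tsize}, giving a crude bound $|N| \leqs a_2 := 12(q+1)^4$ (or a sharper one using the explicit torus structure from \cite[Table~1.1]{DerMic}). From \cite[Proposition~2.11]{BTh_ep} (or the analogous bounds for ${}^3D_4(q)$ in that source) one extracts $|x^G| > q^{c_1} =: b_1$ for $x$ a long root element and $|x^G| > q^{c_2} =: b_2$ for all other prime order elements, where $c_2$ is substantially larger than $c_1$ (for $F_4$-related groups the relevant exponents are small, so here I expect something like $c_1$ around $8$--$10$ and $c_2$ around $14$--$16$). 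Applying Lemma~\ref{l:calc} twice then yields
\[
\what{\mathcal{Q}}(G,N,2) < a_1^2/b_1 + a_2^2/b_2,
\]
where $a_1$ bounds the number of long root elements in $N$: if $p \ne 2$ there are none (so $a_1=0$), and if $p=2$ then $a_1 \leqs \rho_R|T| \leqs 6(q^3+1)(q+1)$ by Lemma~\ref{l:root} since $D_{12}$ has at most $6$ reflections.

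The second step is to check that this estimate already gives $\what{\mathcal{Q}}(G,N,2)<1$ for all sufficiently large $q$; a routine comparison of exponents ($4 \cdot 1 < c_2$ on the dominant term, and the root-element contribution being smaller still) should close out every $q$ above some small threshold, likely $q \geqs 5$ or thereabouts. That leaves the genuinely small cases $q \in \{2,3,4\}$, together with any case where $N$ happens to be a maximal subgroup of $G$ — for those maximal cases we invoke Proposition~\ref{p:Nmax} directly (the relevant tori for ${}^3D_4(q)$ with $N$ maximal, e.g. $C_{q^4-q^2+1}$, are listed in \cite[Table~5.2]{LSS}). For the remaining small $q$ with $N$ non-maximal, I would appeal to Lemma~\ref{l:excepcomp}, which already establishes $b(G,N)=2$ for ${}^3D_4(2)$ and ${}^3D_4(4)$ computationally; the case $q=3$, if not covered there, would be handled in the same {\sc Magma} style as in the proof of Lemma~\ref{l:excepcomp} — construct $G$ in a suitable permutation or matrix representation, build $N = N_G(T)$ for each torus $T$, and exhibit $x \in G$ with $N \cap N^x = 1$ by random search (or, as done for ${}^3D_4(4)$, work inside the centraliser ${\rm L}_2(q^3) \times C_{q\pm1}$ of a suitable semisimple element to locate the relevant tori).

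The main obstacle I anticipate is not the large-$q$ generic argument (which is entirely mechanical once the class-size lower bounds $b_1,b_2$ are in hand) but rather pinning down the precise value of $a_1$ — the count of long root elements inside $N$ when $p=2$ — sharply enough for the borderline small values of $q$, and more generally making sure every non-maximal torus for small $q$ is genuinely covered by a computation. One subtlety is that the relevant exponent $c_1$ for long root elements in ${}^3D_4(q)$ is comparatively small (root elements form a small class in a group of twisted rank $2$), so the term $a_1^2/b_1$ is the delicate one; using Lemma~\ref{l:root} to replace $a_1 = |x^G \cap N|$ by $\rho_R|T| \leqs 6(q^3+1)(q+1)$ rather than the trivial $|N|$ is what makes this term manageable. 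A secondary point is bookkeeping the several torus types of ${}^3D_4(q)$ (there are six $F$-classes in $W=D_{12}$, with tori such as $(C_{q\pm1})$-type factors, $C_{q^2\pm q+1}$, $C_{q^4-q^2+1}$, $C_{q^3\pm1}\times C_{q\mp1}$ up to the precise structure in \cite[Table~1.1]{DerMic}), but since our bounds only use $|T| \leqs (q^3+1)(q+1)$ and $\rho_R \leqs 6$ uniformly, this does not cause real difficulty except in the explicitly computational small-$q$ cases.
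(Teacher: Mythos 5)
Your proposal follows essentially the same route as the paper: the same split into long root elements (bounded via Lemma \ref{l:root}, and absent when $p$ is odd) versus all other prime order elements, the same class-size bounds from \cite[Proposition 2.11]{BTh_ep} (the actual exponents are $q^{10}$ and $q^{16}$), Proposition \ref{p:Nmax} for the maximal tori $C_{q^2\pm q+1}\times C_{q^2\pm q+1}$ and $C_{q^4-q^2+1}$, and Lemma \ref{l:excepcomp} for small even $q$. The only refinements worth noting are that no computation is needed for $q=3$ (for all odd $q$ the absence of root elements in $N$ makes the single bound $|N|^2/q^{16}<1$ suffice), and that the relevant counts are the $F$-centralisers $R=C_{W,F}(w)$ inside $W(D_4)$ rather than a Weyl group $D_{12}$ per se, though your resulting bounds $|N|\leqs 12(q^3+1)(q+1)$ and $\rho_R|T|\leqs 7(q^3+1)(q+1)$ match the paper's.
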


\begin{proof}\label{p:excepQ}
The possibilities for $T$ and $N$ are recorded in \cite[Table 1.1]{DerMic}. Note that if $x \in G$ has prime order, then either $|x^G|>q^{16} = b_1$, or $x$ is a long root element and $|x^G|>q^{10} = b_2$ (see \cite[Proposition 2.11]{BTh_ep}).

First observe that if $T = C_{q^2\pm q+1} \times C_{q^2\pm q+1}$ or $C_{q^4-q^2+1}$ then $N$ is a maximal subgroup of $G$ and thus $b(G,N) = 2$ by Proposition \ref{p:Nmax}. For the remainder, we may assume $N$ is not one of these possibilities, in which case $|N| \leqs 12(q^3+1)(q+1) = a_1$.

If $q$ is odd then $N$ does not contain long root elements by Lemma \ref{l:root} and thus $\what{\mathcal{Q}}(G,N,2) < a_1^2/b_1 < 1$. Now assume $q$ is even. By Lemma \ref{l:excepcomp}, we may assume $q \geqs 8$. Here 
$\what{\mathcal{Q}}(G,N,2) < a_1^2/b_2$, which is less than $1$ if $q \geqs 16$. Finally, suppose $q=8$ and write $N = T.R$. Here we check that $i_2(R) \leqs 7$, so 
$N$ contains at most $7|T| \leqs 7(q^3+1)(q+1) = a_2$ long root elements by  Lemma \ref{l:root}. It is now routine to check that \eqref{e:Q2} holds, with $a_1, b_1$ and $b_2$ defined as above.
\end{proof}

\begin{prop}\label{p:2f4}
If $G = {}^2F_4(q)'$, then $b(G,N) = 2$.      
\end{prop}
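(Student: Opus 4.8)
The plan is to follow the pattern of the proofs of Propositions \ref{p:e8}--\ref{p:3d4}, applying Proposition \ref{p:base} after bounding $\what{\mathcal{Q}}(G,N,2)$ by means of fixed point ratio estimates together with Lemma \ref{l:calc}. Since the Tits group ${}^2F_4(2)'$ is already covered by Lemma \ref{l:excepcomp}, we may assume $q = 2^{2n+1} \geqs 8$, so that $G = {}^2F_4(q)$ is simple; recall that $p = 2$ here.

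First I would record the possibilities for $N = T.R$, where $R = C_{W,F}(w)$ and $W$ is the Weyl group of $F_4$; these can be read off from \cite{Shin} (see also \cite[Table 7.3]{Gager}). The ambient torus $\bar{T}_w$ is $4$-dimensional, so Lemma \ref{l:Tsize} gives $|T| \leqs (q^{1/2}+1)^4$, which is at most $4q^2$ for $q \geqs 8$, while $R$ embeds in $W$ and the largest $F$-centraliser is the dihedral group of order $16$ serving as the Weyl group of the $(B,N)$-pair of $G$ (realised when $w=1$), so $|R| \leqs 16$. Hence $|N| \leqs 16(q^{1/2}+1)^4 \leqs 64q^2 =: a_1$. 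I would also note that if $\rho_R$ denotes the number of reflections in $R$, then $\rho_R \leqs 8$, so Lemma \ref{l:root} shows that $N$ contains at most $\rho_R|T| \leqs 32q^2 =: a_2$ long root elements; moreover these are the only unipotent elements in $N$, since any unipotent element of $N$ maps to an involution in $R$.

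Now let $x \in N$ have prime order. If $x$ is a long root element then \cite[Proposition 2.11]{BTh_ep} gives $|x^G| > q^{10} =: b_2$, and in all other cases $|x^G| > q^{16} =: b_1$. Applying Lemma \ref{l:calc} to the single class of long root elements and to the remaining prime order classes separately, we obtain
\[
\what{\mathcal{Q}}(G,N,2) < \frac{a_1^2}{b_1} + \frac{a_2^2}{b_2} < 1
\]
for all $q \geqs 8$, whence $b(G,N) = 2$ by Proposition \ref{p:base}. (In the cases where $N$ is a maximal subgroup of $G$, which include the normalisers of the two Coxeter-type maximal tori of order $q^2 \pm \sqrt{2q^3} + q \pm \sqrt{2q} + 1$, one could instead quote Proposition \ref{p:Nmax} directly, but the uniform estimate above already handles them.)

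The only real work lies in pinning down the structure of the normalisers $N = T.R$ for the twisted group --- in particular the correct list of $F$-centralisers $R$ and their reflection counts $\rho_R$ --- since the diagram automorphism identifies long and short roots and so these data cannot simply be transcribed from the conjugacy classes of $W(F_4)$. This, however, is bookkeeping rather than a genuine obstacle: because $|N|$ is of order $q^2$ while every nontrivial class has size at least $q^{10}$, the resulting numerical inequality holds with a large margin already at $q = 8$, and remains valid even under conservative choices for the constants $b_1$ and $b_2$.
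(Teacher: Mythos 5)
Your strategy is exactly the one the paper uses: bound $|T|$ via Lemma \ref{l:Tsize}, count the long root elements in $N$ via Lemma \ref{l:root}, feed the class size bounds from \cite[Proposition 2.11]{BTh_ep} into Lemma \ref{l:calc}, and conclude with Proposition \ref{p:base}, having disposed of ${}^2F_4(2)'$ by Lemma \ref{l:excepcomp}. However, two of your constants rest on false intermediate claims. First, it is not true that the largest $F$-centraliser is $C_{W,F}(1)$: the fixed-point subgroup $C_{W,F}(1) \cong D_{16}$ is indeed the Weyl group of the $(B,N)$-pair of $G$, but for other $F$-classes $R = C_{W,F}(w)$ can be considerably larger, and the paper works with $|R| \leqs 96$, read off from \cite[Table 7.3]{Gager} (relative Weyl groups of order $48$ and $96$ genuinely occur among the eleven classes). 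So both $|R| \leqs 16$ and $\rho_R \leqs 8$ are unjustified; the paper instead bounds $\rho_R$ by the total number $24$ of reflections in $W(F_4)$. Second, \cite[Proposition 2.11]{BTh_ep} does not give $|x^G| > q^{16}$ for the non-long-root classes of ${}^2F_4(q)$; the bound available (and used in the paper) is $|x^G| > (q-1)q^{13}$, and there are classes of size roughly $q^{14}$, so your $b_1 = q^{16}$ is false as stated. Your parenthetical assertion that the long root elements are the only unipotent elements of $N$ is also incorrect (products of commuting reflections in $R$ lift to non-root involutions), though this is harmless since you treat all non-long-root elements uniformly.

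As you anticipate in your closing paragraph, none of this is fatal to the method: with the corrected data $a_1 = 24(\sqrt{q}+1)^4$ long root elements, $|N| \leqs a_2 = 96(\sqrt{q}+1)^4$, $b_1 = (q-1)q^{10}$ and $b_2 = (q-1)q^{13}$, one still gets $\what{\mathcal{Q}}(G,N,2) < a_1^2/b_1 + a_2^2/b_2 < 1$ with a comfortable margin for all $q \geqs 8$. But the proof as written contains false assertions about $R$ and about the class sizes, and the "bookkeeping" you defer is precisely where the argument needs to consult \cite[Table 7.3]{Gager} or \cite{Shin} rather than extrapolate from the case $w = 1$.
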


\begin{proof}
By Lemma \ref{l:excepcomp} we may assume $q \geqs 8$. As explained in \cite[Section 7.4]{Gager}, there are $11$ possibilities for $N = T.R$, up to conjugacy, and the structure in each case is recorded in \cite[Table 7.3]{Gager}. Let $x \in G$ be an element of prime order. If $x$ is a long root element, then $|x^G|> (q-1)q^{10} = b_1$, otherwise $|x^G|>(q-1)q^{13} = b_2$ (see \cite[Proposition 2.11]{BTh_ep}). Now $|T| \leqs (\sqrt{q} + 1)^4$ (see Lemma \ref{l:Tsize}), $|R| \leqs 96$ and by applying Lemma \ref{l:root} we see that $N$ contains at most $a_1 = 24(\sqrt{q} + 1)^4$ long root elements since the Weyl group of type $F_4$ contains $24$ reflections. One can now check that \eqref{e:Q2} holds, with $a_2 = 96(\sqrt{q} + 1)^4$.
\end{proof}

\begin{prop}\label{p:2g2}
If $G ={}^2G_2(q)'$, then $b(G,N) = 2$.      
\end{prop}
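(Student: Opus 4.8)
The plan is to imitate the arguments already used for the other exceptional families and to apply Proposition \ref{p:base}, so that it will suffice to show $\what{\mathcal{Q}}(G,N,2) < 1$. As noted at the start of this section, we may assume $q = 3^{2m+1} \geqs 27$, since ${}^2G_2(3)' \cong {\rm L}_2(8)$ has already been treated. Write $N = T.R$ as usual, with $R = C_{W,F}(w)$ and $W = D_{12}$ the Weyl group of type $G_2$. The graph automorphism inducing the Ree twist acts on $W$ as an outer involution (it interchanges the two classes of reflections), so $|R| \leqs 6$ in every case; and from the standard description of the maximal tori of ${}^2G_2(q)$ (see \cite{Gager}), $T$ is cyclic, of order $q - 1$, $q + 1$, $q + \sqrt{3q} + 1$ or $q - \sqrt{3q} + 1$. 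Since $\bar{T}$ is $2$-dimensional and $\bar{G}^F = {}^2G_2(q)$, Lemma \ref{l:Tsize} gives $|T| \leqs (\sqrt{q} + 1)^2$, whence $|N| \leqs 6(\sqrt{q} + 1)^2 =: a$.

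Now let $x \in N$ have prime order. The crucial point --- exactly as in the analysis of $F_4(q)$, $G_2(q)'$ and ${}^3D_4(q)$ with $q$ odd --- is that $p = 3$, so Lemma \ref{l:root} shows that $N$ contains no root elements. Hence \cite[Proposition 2.11]{BTh_ep} gives $|x^G| > (q-1)q^3 =: b$ for every $x \in N$ of prime order. The sets $x_i^G \cap N$, as $x_i^G$ runs over the conjugacy classes of prime-order elements of $G$, are pairwise disjoint subsets of $N$, so $\sum_i |x_i^G \cap N| \leqs |N| \leqs a$; applying Lemma \ref{l:calc} with $A = a$ and $B = b$ we obtain
\[
\what{\mathcal{Q}}(G,N,2) \leqs \frac{a^2}{b} = \frac{36(\sqrt{q} + 1)^4}{(q-1)q^3} < 1
\]
for all $q \geqs 27$. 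Therefore $b(G,N) = 2$ by Proposition \ref{p:base}.

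I do not expect any genuine difficulty here: the estimate is of the same flavour as those in Propositions \ref{p:f4}--\ref{p:2f4}, and because $q$ is necessarily a power of $3$ we are always in the most favourable situation, the one in which $N$ contains no root elements; in particular no computation should be required, as the crude bound above already suffices for the smallest case $q = 27$. The only detail that repays a moment's care is the precise list of possibilities for $N$, equivalently the $F$-classes of $W = D_{12}$ under the Ree twist together with the orders of the corresponding $T$ and $R$; if one prefers to avoid this, one may instead note that when $T$ has order $q \pm \sqrt{3q} + 1$ the subgroup $N = T{:}C_6$ is maximal in $G$ (see \cite[Table 5.2]{LSS}), so that Proposition \ref{p:Nmax} disposes of those two cases and the sharper bound $|N| \leqs 6(q+1)$ may be used in the remaining ones.
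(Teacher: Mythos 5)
Your proof is correct, and it reaches the conclusion by a slightly different (and arguably more self-contained) route than the paper. The paper first cites \cite[Proposition~7.4]{Gager} for the four possibilities for $N$ and then invokes Kleidman's classification of the maximal subgroups of ${}^2G_2(q)$ (\cite[Theorem C]{K88}) to conclude that either $N$ is maximal, in which case Proposition \ref{p:Nmax} applies, or $N = C_{q-1}.2$; only in the latter case does it run the probabilistic estimate, with the sharp value $a = |N| = 2(q-1)$. You instead treat all four cases uniformly: the crude bound $|N| \leqs 6(\sqrt{q}+1)^2$ from Lemma \ref{l:Tsize} together with $|x^G| > (q-1)q^3$ gives $\what{\mathcal{Q}}(G,N,2) \leqs 36(\sqrt{q}+1)^4/((q-1)q^3) \approx 36/q^2 < 1$ already at $q=27$, so no appeal to \cite{K88} is needed. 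What the paper's route buys is a smaller constant in the one case it estimates; what yours buys is independence from the maximal subgroup classification. Two cosmetic points: the torus of order $q+1$ is not cyclic (it is $2^2 \times C_{(q+1)/4}$), though only $|T|$ enters your argument; and the appeal to Lemma \ref{l:root} to exclude root elements is harmless but unnecessary here, since the smallest unipotent class of ${}^2G_2(q)$ has size $(q^3+1)(q-1) > (q-1)q^3$, so the lower bound on $|x^G|$ holds for every nontrivial element of prime order — which is exactly how the paper applies \cite[Proposition 2.11]{BTh_ep} in this case.
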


\begin{proof}
Recall that we may assume $q \geqs 27$. The four possibilities for $N = T.R$ (up to conjugacy) are given in \cite[Proposition~7.4]{Gager}. By \cite[Theorem C]{K88}, either $N$ is maximal and hence $b(G,N) = 2$ by Proposition \ref{p:Nmax}, or $N = C_{q-1}.2$. To handle the latter case, set $a = |N|$ and note that $|x^G|> (q-1)q^{3} = b$ for any prime order element $x \in N$ by \cite[Proposition 2.11]{BTh_ep}. By Lemma \ref{l:calc}, this implies that $\what{\mathcal{Q}}(G,N,2) < a^2/b$, which is less than $1$ for $q \geqs 27$. 
\end{proof}

\vs

This completes the proof of Theorem \ref{t:main} when $G$ is a simple exceptional group of Lie type. By combining this with the main results on classical groups in Sections \ref{s:lu}-\ref{s:o_even}, we conclude that the proof of Theorem \ref{t:main} is complete. Finally, we establish Corollary \ref{c:main}.

\begin{proof}[Proof of Corollary \ref{c:main}]
Let $G = O^{p'}(\bar{G}^F)$ be a finite simple group of Lie type over a field of characteristic $p$ and let $N = N_G(\bar{T})$, where $\bar{T}$ is an $F$-stable maximal torus of $\bar{G}$. If $b(G,N) = 2$ then $N\cap N^x = 1$ for some $x \in G$, so by Theorem \ref{t:main} we may assume $(G,N)$ is one of the cases listed in Table \ref{tab:main}. The result now follows by combining the statements of Theorems \ref{t:lu}, \ref{t:sp} and \ref{t:so_even} with Propositions \ref{l:psl2}--\ref{l:psl5}.
\end{proof}

\end{document}